\newcommand{\RR}{\mathbb{R}}
\newcommand{\OO}{\mathcal{O}}
\newcommand{\image}{\textnormal{im}\,}
\newcommand{\kernel}{\textnormal{ker}\,}
\newcommand{\cokernel}{\textnormal{coker}\,}
\newcommand{\Hom}{\textnormal{Hom}}
\newcommand{\Ext}{\textnormal{Ext}}
\newcommand{\Ac}{\mathcal{A}}
\newcommand{\Bc}{\mathcal{B}}
\newcommand{\Fc}{\mathcal{F}}
\newcommand{\Tc}{\mathcal{T}}
\newcommand{\Dc}{\mathcal{D}}
\newcommand{\Hc}{\mathcal{H}}
\newcommand{\Pc}{\mathcal{P}}
\newcommand{\Uc}{\mathcal{U}}
\newcommand{\Coh}{\mathrm{Coh}}
\newcommand{\arinj}{\ar@{^{(}->}}
\newcommand{\arsurj}{\ar@{->>}}
\newcommand{\areq}{\ar@{=}}
\newcommand{\wh}{\widehat}
\newcommand{\Bl}{\mathcal{B}^l}
\newcommand{\ch}{\mathrm{ch}}
\newcommand{\Stab}{\mathrm{Stab}}
\newcommand{\Aut}{\mathrm{Aut}}
\newcommand{\whPhi}{{\wh{\Phi}}}
\newcommand{\wt}{\widetilde}
\newcommand{\olw}{{\overline{\omega}}}
\newcommand{\bsm}{\begin{smallmatrix}}
\newcommand{\esm}{\end{smallmatrix}}
\newcommand{\CLoovc}{{\mathbb{C}(\!(\tfrac{1}{v})\!)^c}}
\newcommand{\RLoovc}{{\mathbb{R}(\!(\tfrac{1}{v})\!)^c}}
\newcommand{\GLlp}{{\mathrm{GL}^{l,+}\!(2,\RLoovc)}}
\newcommand{\C}{\mathbb{C}}
\newcommand{\mC}{\mathcal{C}}
\newcommand{\A}{\mathcal{A}}
\newcommand{\B}{\mathcal{B}}
\newcommand{\T}{\mathcal{T}}
\newcommand{\F}{\mathcal{F}}
\newcommand{\HN}{\text{HN}}
\newcommand{\cP}{\mathcal{P}}
\newcommand{\cQ}{\mathcal{Q}}
\newcommand{\cO}{\mathcal{O}}
\newcommand{\NS}{\text{NS}}
\newtheorem*{rep@theorem}{\rep@title}
\newcommand{\newreptheorem}[2]{%
\newenvironment{rep#1}[1]{%
 \def\rep@title{#2 \ref{##1}}%
 \begin{rep@theorem}}%
 {\end{rep@theorem}}}
\newenvironment{que}[1][]%
  {\begin{genthm}{Question}{true}{#1}{paragraph}}%
  {\end{genthm}}
\newcommand\reallywidehat[1]{%
\savestack{\tmpbox}{\stretchto{%
  \scaleto{%
    \scalerel*[\widthof{\ensuremath{#1}}]{\kern-.6pt\bigwedge\kern-.6pt}%
    {\rule[-\textheight/2]{1ex}{\textheight}}
  }{\textheight}%
}{0.5ex}}%
\stackon[1pt]{#1}{\tmpbox}%
}
\newenvironment{nouppercase}{%
  \renewcommand{\uppercasenonmath}[1]{}}{}
\begin{document}

\title[Stability for line bundles and dHYM equation on elliptic surfaces]{Stability for line bundles and deformed Hermitian-Yang-Mills equation on some elliptic surfaces}

\author[Tristan C. Collins]{Tristan C. Collins}
\address{Department of Mathematics \\
Massachusetts Institute of Technology\\
77 Massachusetts Avenue\\
Cambridge MA 02139 \\
USA}
\email{tristanc@mit.edu}

\author[Jason Lo]{Jason Lo}
\address{Department of Mathematics \\
California State University, Northridge\\
18111 Nordhoff Street\\
Northridge CA 91330 \\
USA}
\email{jason.lo@csun.edu}

\author[Yun Shi]{Yun Shi}
\address{Department of Mathematics \\
Brandeis University\\
415 South Street\\
Waltham MA 02453\\
USA}
\email{yunshi@brandeis.edu}

\author[Shing-Tung Yau]{Shing-Tung Yau}
\address{Yau Mathematical Sciences Center\\
Tsinghua University\\
Haidian District, Beijing\\
China}
\email{styau@tsinghua.edu.cn}


\makeatletter
\@namedef{subjclassname@2020}{%
  \textup{2020} Mathematics Subject Classification}
\makeatother


\subjclass[2020]{Primary 14J27; Secondary: 14J60, 53C07}

\begin{abstract}
We study the twisted ampleness criterion due to Collins, Jacob and Yau on surfaces, which is equivalent to the existence of solutions to the deformed Hermitian-Yang-Mills (dHYM) equation.  When $X$ is a Weierstra{\ss} elliptic K3 surface, and $\omega$ an ample class such that $\omega$ lies in the span of a section class and the fiber class, we show that for a class of line bundles $L$ with fiber degree 1 and $\omega c_1(L)>0$, the twisted ampleness of $L$  respect to $\omega$ always implies the $\sigma_{\omega, 0}$-stability (Bridgeland stability) of $L$. This answers a question by Collins and Yau for a class of examples. 
\end{abstract}

\begin{nouppercase}
\maketitle
\end{nouppercase}


\tableofcontents
\section{Introduction}

In the 1980s, Donaldson and Uhlenbeck-Yau established the following fundamental result:

\begin{thm}\cite{DonaldsonASDYMC, UYHYMC}
    Let $(X,\omega)$ be a compact K\"{a}hler manifold, and $E \to X$ an irreducible holomorphic vector bundle.  Then $E$ admits a Hermitian metric solving the Hermitian-Yang-Mills equation if and only if $E$ is a Mumford-Takemoto stable vector bundle.
\end{thm}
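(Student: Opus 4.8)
The plan is to establish the two implications separately. Write $\mu(\mathcal{F})=\deg_\omega(\mathcal{F})/\mathrm{rk}(\mathcal{F})$ for the slope, let $F_h$ denote the curvature of the Chern connection of a Hermitian metric $h$, and let $\Lambda_\omega$ be contraction against $\omega$, so that the Hermitian--Yang--Mills (HYM) equation reads $\sqrt{-1}\,\Lambda_\omega F_h=\lambda\,\mathrm{Id}_E$, where $\lambda$ is the topological constant (determined by $\mu(E)$ and $[\omega]$) forced by taking the trace and integrating.

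The implication HYM $\Rightarrow$ stable is the Chern--Weil part. Given a solution $h$ and a coherent subsheaf $\mathcal{S}\subset E$ with $0<\mathrm{rk}\,\mathcal{S}<\mathrm{rk}\,E$, I would work away from the singularity set of $\mathcal{S}$ (of complex codimension $\geq 2$), where $\mathcal{S}$ is a subbundle with $h$-orthogonal projection $\pi$ and second fundamental form $\beta=(\mathrm{Id}-\pi)\bar\partial\pi$. Up to the standard normalization, the degree satisfies
\[
\deg_\omega(\mathcal{S})=\int_X \mathrm{tr}\!\big(\pi\,\sqrt{-1}\,\Lambda_\omega F_h\big)\,\frac{\omega^n}{n!}-\int_X |\beta|^2\,\frac{\omega^n}{n!}.
\]
Substituting the HYM equation turns the first integral into $\mu(E)\,\mathrm{rk}\,\mathcal{S}$, so $\mu(\mathcal{S})\le\mu(E)$ with equality only if $\beta\equiv 0$; but $\beta\equiv 0$ would split $\mathcal{S}$ off as a holomorphic direct summand, contradicting irreducibility. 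Hence $\mu(\mathcal{S})<\mu(E)$ and $E$ is stable.

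The converse---existence of a HYM metric on a stable $E$---is the deep direction, and I would run the $\varepsilon$-regularized continuity method of Uhlenbeck--Yau (Donaldson's heat flow is an equally valid alternative). Fix a smooth background metric $h_0$ and, for each $\varepsilon>0$, solve the perturbed equation
\[
\sqrt{-1}\,\Lambda_\omega F_{h_\varepsilon}-\lambda\,\mathrm{Id}_E=-\varepsilon\,\log\!\big(h_0^{-1}h_\varepsilon\big).
\]
Writing $h_\varepsilon=h_0 e^{s_\varepsilon}$ with $s_\varepsilon$ self-adjoint, the perturbation makes the linearized operator invertible and the associated functional strictly convex, so existence and uniqueness of $h_\varepsilon$ for fixed $\varepsilon$ follow from standard elliptic theory. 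The crux is the limit $\varepsilon\to 0^+$, governed by $m_\varepsilon=\sup_X |s_\varepsilon|_{h_0}$. If $\limsup_{\varepsilon\to 0} m_\varepsilon<\infty$, then $C^0$ control bootstraps through $L^p$ and Schauder estimates to uniform $C^\infty$ bounds, and a subsequence converges to a genuine HYM metric. If instead $m_\varepsilon\to\infty$, I would normalize $u_\varepsilon=s_\varepsilon/m_\varepsilon$, extract a nonzero weak $L^2_1$ limit, and show that an appropriate spectral projection $\pi$ of it is a weakly holomorphic subbundle: $\pi\in L^2_1$, $\pi=\pi^\ast=\pi^2$, and $(\mathrm{Id}-\pi)\bar\partial\pi=0$ almost everywhere.

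The main obstacle---the genuinely hard analytic input---is the Uhlenbeck--Yau regularity theorem: every such weakly holomorphic $L^2_1$ projection is smooth off an analytic set of codimension $\geq 2$ and therefore determines a coherent subsheaf $\mathcal{S}\subset E$. Passing the normalized equation to the limit in the degree formula above then yields $\mu(\mathcal{S})\ge\mu(E)$, so $\mathcal{S}$ destabilizes $E$, contradicting stability; hence this second alternative cannot occur and the metric exists. I expect essentially all of the difficulty to sit in two places: the $\varepsilon$-uniform a priori estimate that drives the dichotomy, and the regularity and coherence of the limiting subbundle, which rests on delicate local elliptic estimates for the integrability equation. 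Everything else is a combination of the implicit function theorem, standard elliptic regularity, and Chern--Weil bookkeeping.
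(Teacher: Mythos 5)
Your outline is correct and is precisely the argument of the cited sources: the paper quotes this Donaldson--Uhlenbeck--Yau theorem as background without proof, and your sketch faithfully reproduces the standard two halves --- the Chern--Weil slope inequality with second fundamental form for HYM $\Rightarrow$ stability, and the Uhlenbeck--Yau $\varepsilon$-regularized continuity method (with the weakly holomorphic $L^2_1$ projection and its regularity/coherence as the key analytic input) for the converse. Nothing further is needed beyond noting that the hard estimates you flag are exactly where the cited papers concentrate their effort.
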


With the development of the theory of deformed Hermitian Yang-Mills (dHYM) equation and the theory of Bridgeland stability conditions, the following question was asked by Collins and Yau:

\begin{que}\cite{CoShdHYM}\label{que:main-1}
Let $L$ be a holomorphic line bundle on $X$.  What is the relationship between the following statements?

(i) $L$ admits a metric solving the deformed Hermitian-Yang-Mills equation,

(ii) $L$ is a Bridgeland stable object in $D^b(\Coh(X))$.
\end{que}

By the work of Collins-Jacob-Yau \cite{CJY}, on a K\"{a}hler surface,  the existence of solutions to the dHYM equation for a $(1,1)$-form is equivalent to an algebraic, positivity condition, which we shall refer to as the ``twisted ampleness" condition.  Briefly, suppose the $L\rightarrow X$ is a line bundle.  Define complex numbers
\[
Z_{X}(L) = -\int_{X}e^{-\sqrt{-1}\omega}ch(L), \quad Z_{C}(L) = -\int_{C}^{-\sqrt{-1}\omega}ch(L)
\]
where $C\subset X$ is any curve.  Suppose that $\Im(Z_{X}(L))>0$.  Then we say that $L$ is {\em twisted ample} (with respect to $\omega$) if, for every curve $C\subset X$ there holds
\[
\Im \left(\frac{Z_{C}(L)}{Z_{X}(L)}\right) >0.
\]
We refer the reader to Section~\ref{sec: que} for more discussion.  Using this formulation, Collins and Shi \cite[Theorem 4.15]{CoShdHYM} answered Question ~\ref{que:main-1} on the blowup $\Bl_P(\mathbb{P}^2)$ of $\mathbb{P}^2$ at a point. The answer is a direct consequence of Arcara-Miles \cite{ARCARA20161655}.  Precisely, twisted ampleness for $L$ (equivalently, the existence of a solution to the dHYM equation) implies that $L$ is Bridgeland stable, but not conversely. In particular, there exist line bundles and K\"ahler classes which are Bridgeland stable, but are not ``twisted ample".  

The work of \cite{CoShdHYM} leaves open the possibility that twisted ampleness (or equivalently the solvability of the dHYM equation) always implies Bridgeland stability on surfaces.  In this article, we take up this question on Weierstra{\ss} elliptic surfaces $X$.  These surfaces differ from the blowup $\Bl_P(\mathbb{P}^2)$ in two fundamental ways: the Picard rank of $X$ could be strictly higher than 2, and there exists an autoequivalence of $D^b(\Coh (X))$ that is a relative Fourier-Mukai transform. In particular, when $X$ is also a K3 surface, we answer Question \ref{que:main-1} for a class of line bundles of fiber degree 1. Our first main result is:

\begin{thm}\label{thm:main-1}
    Let $X$ be a Weierstra{\ss} elliptic K3 surface.  Suppose $L$ is a line bundle on $X$ and $\omega$ is an ample class on $X$, such that $c_1(L), \omega$ both lie in the span of the section class $\Theta$ and fiber class $f$.  Then under the assumptions $fc_1(L)=1, \omega c_1(L)>0$, we have:
    \begin{itemize}
        \item If we further assume $c_1(L) \neq \Theta, \Theta + f$, then  the line bundle $L$ is stable with respect to the Bridgeland stability $\sigma_{\omega,0}$. 
        \item If $c_1(L)=\Theta, \Theta + f$, then whenever $L$ is twisted ample with respect to $\omega$, the line bundle $L$ is $\sigma_{\omega, 0}$-stable.
    \end{itemize} 
    In particular, under the assumptions $fc_1(L)=1, \omega c_1(L)>0$, we have
    \[
    \text{$L$ is twisted ample with respect to $\omega$} \Rightarrow \text{$L$ is $\sigma_{\omega, 0}$-stable}.
    \]
    Furthermore, there exist line bundles which are $\sigma_{\omega,0}$ stable, but which are not twisted ample.
\end{thm}

Note that the restriction $\omega c_1(L) >0$, which is equivalent to $\Im(Z_{X}(L))>0$ appears in the reformulation of the existence of solutions to the dHYM equation in terms of twisted ampleness; morally, this condition is needed to ensure that the line bundle $L$ lies in the heart of a Bridgeland stability condition (so that it makes sense to speak of the Bridgeland stability of $L$).

Writing $\alpha := c_1(L)=\Theta + (D+e)f$ where $e = -\Theta^2$, our proof of  Theorem \ref{thm:main-1} is divided into two main cases according to the sign of the parameter $D$, with very different approaches. 

When $D$ is negative, we begin by proving a new estimate for the Bridgeland walls for line bundles.  The twisted ampleness condition arises as a key part of this estimate - see Theorem \ref{thm:main1}.  By using a `going-up lemma' in conjunction with autoequivalences on the elliptic surface, we extend the region of stability for $L$ to the entire region $\omega \alpha >0$ except when the coefficient of $f$ in $\omega$ is strictly between $-e$ and $0$ (Theorem \ref{thm:Dalphaneg-main1}). Note that this part of the argument actually works for any Weierstra{\ss} ellipic surface, without the K3 assumption. 

When $D$ is nonegative, the above approach is not sufficient when $\omega$ is small.  In order to reach the region where $\omega$ is small, we are forced to consider the notion of weak stability conditions, which are limits (or `degenerations') of Bridgeland stability conditions.  We then consider the actions of autoequivalences on weak stability together with the going up lemma to finish the argument.

The going-up lemma is a key technical result that links the two approaches together - it can be thought of as a "preservation of stability" result for $L$ under a 1-dimensional deformation (along a vertical line).  When combined with the relative Fourier-Mukai transform on an elliptic surface, the going-up lemma becomes a 2-dimensional deformation result and thus more powerful.

More concretely, since the ample divisors $\omega$ we consider lie in the span of $\Theta$ and $f$, we can always write $\omega$ in the form \[
  \omega = R_\omega (\Theta + (D_\omega + e)f) 
\]
for some $R_\omega, D_\omega >0$.  If we also set
\[
  V_\omega = \frac{\omega^2}{2},
\]
then $\omega$ is completely determined by $D_\omega$ and $V_\omega$, allowing us to think of $\omega$ as a point on the $(D_\omega, V_\omega)$-plane.  The going-up lemma then roughly says the following: if a line bundle $L$ is stable with respect to the Bridgeland stability $\sigma_{\omega, 0}$, then $L$ is also stable with respect to the Bridgeland stability $\sigma_{\omega^\ast, 0}$ where 
\[
  D_{\omega^\ast}= D_\omega, \, \, V_{\omega^\ast}>V_\omega.
\]
That is, a line bundle $L$ remains stable when we "go up" vertically on the $(D_\omega, V_\omega)$-plane.  Since a Weierstra{\ss} elliptic surface $X$ comes with a relative Fourier-Mukai transform $\Phi$ that switches the $D_\omega$ and $V_\omega$ coordinates, the going up lemma implies that the stability of $L$ at $\sigma_{\omega, 0}$ also implies the stability of $L$ at $\sigma_{\omega^\bullet, 0}$ where
\[
 D_{\omega^\bullet}> D_\omega, \, \, V_{\omega^\bullet}=V_\omega.
\]
Overall, the stability of $L$ at $\sigma_{\omega, 0}$ implies the stability of $L$ at $\sigma_{\omega',0}$ where
\[
 D_{\omega'}> D_\omega, \, \, V_{\omega'}>V_\omega.
\]
In the case of $D <-e$, this is sufficient for us to extend the stability of $L$ from a region determined by the twisted ampleness criterion and an estimate for the outermost Bridgeland wall, to the entire region of $\omega c_1(L) >0$.  When $X$ is K3 (i.e.\ $e=2$), and when $-e \leq D <0$, i.e.\ when $D = -2, -1$, we need to assume that the twisted ampleness criterion already holds in order to conclude the stability of $L$ (Theorem \ref{thm:conj-Daotcase}).


In the case where $D \geq 0$, however, the above argument is no longer sufficient to prove the stability of $L$ in the entire region $\omega c_1(L) >0$, the issue being there is no obvious way to  reach the region on the $(D_\omega, V_\omega)$-plane near the origin.  As a result, we need to consider limits of Bridgeland stability conditions where  Bridgeland stability conditions degenerate to "weak stability conditions".  First, we identify a weak stability condition such that $\Psi L$, for some autoequivalence $\Psi$ of $D^b(\Coh(X))$, is stable with respect to a weak stability condition $\tau_R$.  Then, by studying the preimage of $\tau_R$ with respect to $\Psi$, we construct another weak stability condition $\tau_L$  such that $L$ itself is $\tau_L$-stable.  The important point is that we can informally think of $\tau_L$ as a weak stability condition corresponding to the origin of the $(D_\omega, V_\omega)$-plane.

Then, by generalising  the going up lemma to weak stability conditions, we obtain the stability of $L$ for all weak stability conditions on the $V_\omega$-axis (i.e.\ $D_\omega=0$ and $V_\omega >0$) - these weak stability conditions look like Bridgeland stability conditions, except that they are constructed using nef divisors rather than ample divisors.  Applying the relative Fourier-Mukai transform $\Phi$ then gives the stability of $\Phi L$ along the $D_\omega$-axis (i.e.\ $V_\omega=0$ and $D_\omega >0$).  If we then apply the generalised going-up lemma to $\Phi L$, we obtain the stability of $\Phi L$ for all $D_\omega, V_\omega>0$.  Finally, since $\Phi$ merely exchanges the $D_\omega$- and $V_\omega$-coordinates, we obtain the stability of $L$ itself for all $D_\omega, V_\omega >0$, proving Theorem \ref{thm:main-1} for the $D \geq 0$ case.

In the course of the proof we note that under the assumptions of Theorem \ref{thm:main-1}, there exist line bundles $L$ such that the region where $L$ is twisted ample does not cover the region $\omega c_1(L)>0$ in the first quadrant of the $D$, $V$ plane. Hence in the case considered in Theorem \ref{thm:main-1}, we find pairs $(L, \omega)$ such that $L$ is $\sigma_{\omega, 0}$ stable, but $L$ is not twisted ample with respect to $\omega$. 

As pointed out above, most of the results for $D<0$ actually works for any Weierstra{\ss} elliptic surface (See Section 7 for details). If we restrict $X$ to a Weierstrass elliptic K3 surface, we obtain the same result for a wider class of line bundles, which is our second main result:
\begin{thm}
Let $X$ be a Weierstra{\ss} elliptic K3 surface, and $\omega$ be an ample class on $X$. 
Let $L$ be a line bundle on $X$ such that $fc_1(L)=1$, $-\Theta c_1(L)+\ch_2(L)=1$ and $\Theta c_1(L)>-1$. 
Then $L$ is $\sigma_{\omega, 0}$-stable for all $\omega\in\langle\Theta, f\rangle$.
\end{thm}

The proof of Theorem 1.4 is completely built on the methods and results of the $D\geq 0$ case of Theorem 1.3.  
\paragraph[Outline of the article]
In Section \ref{sec:prelim}, we set up some notations for derived categories, elliptic surfaces, and Bridgeland stability conditions on surfaces. 
In Section \ref{sec: que}, we recall some notions and results from deformed Hermitian-Yang-Mills (dHYM) equation. In particular, we recall an algebraic criterion for the dHYM equation to admit a solution and state the main question of this paper. 
In Section \ref{sec:geom-goingup}, we show the stability of pure 1-dimensional sheaf with the same Chern character as $\cO_f$, which implies that the image of certain Bridgeland stability conditions remain geometric under the relative Fourier-Mukai transform. 
In Section \ref{sec:mainstabeq}, we state an equation of group action on Bridgeland stability conditions, which is used frequently throughout the article. 
In Section \ref{sec:astabcriforlbs}, we give a criterion for stability of line bundles. This result is completely general - it applies to any line bundle and ample class on an arbitrary projective surface. 

For the rest of the paper, we restrict to the case of Bridgeland stability conditions of the form $\sigma_{\omega, 0}$, where $\omega\in\langle\Theta, f\rangle$, and line bundles of fiber degree $1$. In Section \ref{sec:Dalphanegative}, we answer the main question for a subclass of line bundles of fiber degree $1$ when $D < 0$. 
For the $D \geq 0$ case, an important tool we use is a generalization of weak stability conditions  developed in \cite{CLSY1}. In Section \ref{sec:defwsc}, we recall the notion of weak stability conditions and the explicit examples of weak stability conditions which will be used in the later sections. 
In Section \ref{sec:wpsuga}, we prove correspondences of stability of objects in weak polynomial stability data under group actions. In Section \ref{sec:Dalphageqzero}, we pull together the results in Section \ref{sec:defwsc} and \ref{sec:wpsuga} to answer the main question for   $D\geq 0$ and prove Theorem 1.4. 

\paragraph[Acknowledgements]
JL was partially supported by NSF grant DMS-2100906.  TCC was partially supported by NSF CAREER grant DMS-1944952.  Part of this work was done when YS was a postdoc at CMSA, Harvard, she would like to thank CMSA for the excellent working environment.


\section{Preliminaries} \label{sec:prelim}



\paragraph[Notation] Let $X$ be a smooth projective variety $X$.  We will write $D^b(X)$ to denote $D^b(\Coh (X))$, the bounded derived category of coherent sheaves on $X$.  For any divisors $D_1, \cdots, D_m$ on $X$, we will write $\langle D_1, \cdots, D_m \rangle_\mathbb{R}$ to denote the set of all $\mathbb{R}$-divisors that are linear combinations of $D_1, \cdots, D_m$.

If $\Ac$ is the heart of a bounded t-structure on a triangulated category $\Tc$, then we will write $\Hc^i_\Ac$ to denote the $i$-th cohomology functor with respect to the t-structure.  For any integers $j<k$, we define  the full subcategory of $\Tc$ 
\[
 D^{[j, k]}_\Ac = \{ E \in \Tc : \Hc^i_\Ac (E) =0 \text{ for all } i \notin [j, k] \}.
\]

\paragraph[Weierstra{\ss} elliptic surface] \label{para:def-Wellsurf}  By a Weierstra{\ss} elliptic surface, we mean a flat morphism  $p : X\to Y$ of smooth projective varieties of relative dimension 1, where $X$ is a surface and
\begin{itemize}
    \item the fibers of $p$ are Gorenstein curves of arithmetic genus 1, and are geometrically integral;
    \item $p$ has a section $s : Y \to X$ such that its image $\Theta$ does not intersect any singular point of any singular fiber of $p$.
\end{itemize}
The definition we use here follows that of 
 \cite[Definition 6.10]{FMNT}.  Under our definition, the generic fiber of $p$ is a smooth elliptic curve, and the singular fibers of $p$ are either nodal or cuspidal curves.  We usually write $f$ to denote the class of a fiber for the fibration $p$, and write $e = -\Theta^2$.  Often, we simply refer to $X$ as a Weierstra{\ss} elliptic surface.  Note that when $Y=\mathbb{P}^1$, $X$  is K3 if and only if $e=2$ \cite[2.3]{LLM}.
 
\paragraph[An autoequivalence on elliptic surfaces]\label{para:WESauto}  Given a Weierstra{\ss} elliptic surface $p : X \to Y$, we can always construct an autoequivalence $\Phi$ of $D^b(X)$ satisfying the following properties:
\begin{itemize}
    \item There exists another autoequivalence $\whPhi$ of $D^b(X)$ such that $\whPhi \Phi \cong \mathrm{id}[-1] \cong \Phi \whPhi$.
 \item For any closed point $x \in X$, the structure sheaf $\OO_x$ is taken by $\Phi$
 to a rank-one torsion-free sheaf of degree 0 on the fiber of $p$ containing $x$.
 \end{itemize}
 In fact, $\Phi$ is a relative Fourier-Mukai transform whose kernel is given by a normalised Poincar\'{e} sheaf, and $\whPhi$ has a similar description.  The construction of $\Phi, \whPhi$ and their properties can be found in \cite[6.2.3]{FMNT}.  
 
\paragraph[Cohomological Fourier-Mukai transform] \label{para:cohomFMT} The autoequivalence $\Phi$ plays a central role in many of the computations in the remainder of this article.  Its corresponding  cohomological Fourier-Mukai transform is given as follows \cite[6.2.6]{FMNT}: Suppose $E \in D^b(X)$ and we write 
\begin{equation}\label{eq:ellipsurfchE}
\ch_0(E)=n,\text{\quad}  f\ch_1(E)=d, \text{\quad} \Theta \ch_1(E)=c,\text{\quad}  \ch_2(E)=s.
\end{equation}
Then we have the following formulas from \cite[(6.21)]{FMNT}
\begin{align*}
\ch_0(\Phi E) &= d, \\
\ch_1 (\Phi E) &= -\ch_1(E) +  (d-n)\Theta + (c+\tfrac{1}{2} ed+s)f, \\
\ch_2 (\Phi E) &= -c-de+\tfrac{1}{2}ne.
\end{align*}
In particular, 
\[
  f\ch_1(\Phi E) = -n, \text{\quad} \Theta \ch_1(\Phi E) = (s-\tfrac{e}{2}d)+ne.
\]


\paragraph[RDV coordinates for divisors] \label{para:RDVcoord} Suppose $X$ is a Weierstra{\ss} elliptic surface.  Given any divisor $M$ on $X$ of the form $M = a\Theta + bf$ where $a, b \in \mathbb{R}$ and $a \neq 0$, we can find real numbers $R_M \neq 0$ and $D_M$ such that
\begin{equation}\label{eq:RDVcoord-2}
  M = R_M (\Theta + (D_M + e)f).
\end{equation}
We also set
\[
  V_M = \frac{M^2}{2}=R_M^2(D_M+\tfrac{e}{2}).
\]
Note that when $D_M, V_M>0$ (e.g.\ when $M$ is ample), we can write
\[
  R_M = \sqrt{\frac{V_M}{D_M+\tfrac{e}{2}}}.  
\]

The coordinates $R_M, D_M, V_M$ for divisors $M$ are especially suited for computations on elliptic fibrations. For example, we have 
\begin{equation}\label{eq:RDVcoord-1}
  \Theta M=R_MD_M
\end{equation}
and, if $W$ is a divisor  written in the  form  \eqref{eq:RDVcoord-2}, then 
\[
 MW = R_M R_W (\Theta + (D_M+D_W + e)f)
\]
which is reminiscent of multiplication for complex numbers in polar coordinates.




\paragraph[WIT$_i$ objects] Suppose  $\Psi : \Tc \to \Uc$ is an exact equivalence of triangulated categories, and $\Ac, \Bc$ are the  hearts of t-structures on $\Tc, \Uc$, respectively.  We say an object $E \in \Tc$ is $\Psi_\Bc$-WIT$_i$ if $\Psi E \in \Bc [-i]$ for some $i \in \mathbb{Z}$.  We also define the full subcategories of $\Ac$ 
\[
  W_{i,\Psi, \Ac, \Bc} = \{ E \in \Ac : E \text{ is $\Psi_{\Bc}$-WIT$_i$}\}
\]
for $i \in \mathbb{Z}$.  When $\Tc=\Uc =D^b(X)$ for some smooth projective variety $X$ and $\Ac= \Bc=\Coh (X)$, we simply say $\Psi$-WIT$_i$ to mean $\Psi_{\Coh (X)}$-WIT$_i$.

If $\wh{\Psi} : \Uc \to \Tc$ is an exact equivalence such that $\wh{\Psi} \Psi \cong \mathrm{id}_{\Tc}$ and $\Psi \wh{\Psi} \cong \mathrm{id}_{\Uc}$, $\Psi \Ac \subset D^{[0,1]}_{\Bc}$, and $\Ac, \Bc$ are hearts of bounded t-structures, then whenever $E \in \Ac$ is $\Psi_\Bc$-WIT$_i$, the object $\Psi E [i]$ is necessarily $\wh{\Psi}_\Ac$-WIT$_{1-i}$ \cite[Lemam 3.10(a)]{Lo20}.

\paragraph[Bridgeland stability conditions]  Let $X$ be a smooth projective variety over $\mathbb{C}$.

\begin{defn}\label{defn: slicing}
A {\em slicing} $\cP$ of $D^{b}(X)$ is a collection of subcategories $\cP(\phi) \subset D^{b}(X)$ for all $\phi \in \mathbb{R}$ such that
\begin{enumerate}
\item $\cP(\phi)[1] = \cP(\phi+1)$,
\item if $\phi_1 > \phi_2$ and $A\in \cP(\phi_1)$, $B \in \cP(\phi_2)$, then ${\rm Hom}(A,B) =0$,
\item every $E\in D^{b}(X)$ admits a Harder-Narasimhan (HN) filtration by objects in $\cP(\phi_i)$ for some $1 \leq i \leq m$.
\end{enumerate}
\end{defn}
\begin{defn}\label{defn: BrStab}
A Bridgeland stability condition on $D^{b}(X)$ with central charge $Z$ is a slicing $\cP$ satisfying the following properties
\begin{enumerate}
\item For any non-zero $E\in \cP(\phi)$ we have
\[
Z(E) \in \mathbb{R}_{>0} e^{\sqrt{-1}\phi},
\]
\item
\[
C := \inf \left\{ \frac{|Z(E)|}{\|\ch(E)\|} : 0 \ne E \in \cP(\phi), \phi \in \mathbb{R} \right\} >0
\]
where $\| \cdot \|$ is any norm on the finite dimensional vector space $H^{\text{even}}(X, \mathbb{R})$.
\end{enumerate}
\end{defn}

Let $\mathbb{H}$ denote the upper-half complex plane together with the negative real axis
\[
  \{ re^{i\pi \phi} : r \in \mathbb{R}_{>0}, \phi \in (0,1]\},
\]
and let $\mathbb{H}_0 = \mathbb{H}\cup \{0\}$.

\begin{prop}[Bridgeland, \cite{StabTC}]\label{prop: BrStab}
	\label{pro1}
	A Bridgeland stability condition on $D^b(X)$ is equivalent to the following data: the heart $\mathcal{A}$ of a bounded t-structure on $D^b(X)$, and a central charge $Z: K(\mathcal{A})\rightarrow \mathbb{C}$ such that for every nonzero object $E\in \mathcal{A}$, one has 
 
 (i) $Z(E)\in \mathbb{H}$,
 
 (ii) $E$ has a finite filtration
	\begin{equation*}
	0=E_0\subset E_1\subset...\subset E_{n-1}\subset E_n=E
	\end{equation*} 
	such that $\HN_i(E)=E_i/E_{i-1}$'s are semistable objects in $\mathcal{A}$ with decreasing phase $\phi$. Furthermore, the central charge satisfies Definition \ref{defn: BrStab} (2).
\end{prop}

Let $\omega\in \mathrm{NS}_\mathbb{R}(X)$ be an ample class, and $B\in \mathrm{NS}_\mathbb{R}(X)$. For any object $E\in \Coh (X)$, we define 
\begin{equation*}
\mu_{\omega, B}(E)=\frac{\omega\cdot \ch_1^B(E)}{\ch_0(E)}.
\end{equation*}
If $E$ is a torsion sheaf, we set $\mu_{\omega, B}(E)=\infty$. Then for any torsion-free sheaf $E\in \Coh(X)$, $E$ admits a Harder-Narasimhan filtration:
\begin{equation*}
	0=E_0\subset E_1\subset E_2 ...\subset E_n=E
\end{equation*}
with $\mu_{\omega, B}(E_{i+1}/E_i)$ monotonically decreasing. The factors $E_{i+1}/E_i$ are called the Harder-Narasimhan factors of $E$, and are denoted by $\HN_i(E)$.
Consider the following subcategories of $\Coh(X)$:

\begin{center}
$\mathcal{T}_{\omega, B}=\{E\in \Coh(X)| \mu_{\omega, B}(\HN_i(E))>0 \text{ for any } i\}$, 
\end{center}
\begin{center}
$\mathcal{F}_{\omega, B}=\{E\in \Coh(X)| \mu_{\omega, B}(\HN_i(E))\leq 0 \text{ for any } i\}$, 
\end{center}
Define an abelian subcategory of $D^b(X)$ by tilting $\Coh(X)$ at the torsion pair $(\T_{{\omega, B}}, \F_{\omega, B})$:
\begin{equation*}
\Coh^{\omega, B}=\langle \F_{\omega, B}[1], \T_{\omega, B}\rangle
\end{equation*}
Equivalently, we can describe $\Coh^{\omega, B}$ as
\begin{equation*}
\Coh^{\omega, B}= \{E\in D^b(X)| H^0(E)\in \mathcal{T}_{\omega, B}, H^{-1}(E)\in \mathcal{F}_{\omega, B}, H^i(E)=0 \text{ for } i\neq -1, 0\}.
\end{equation*}

In this paper, we mainly work with Bridgeland stability conditions of the following forms. 

(i) $\sigma_{\omega, B}=(Z_{\omega, B}, \Coh^{\omega, B})$, where the central charge is defined by 
\begin{equation}
\label{Equ:CenCharNoTodd}
Z_{\omega, B}(E)=-\int e^{-i\omega}\ch^B(E).
\end{equation}

(ii) $\sigma^{td}_{\omega, B}=(Z^{td}_{\omega, B}, \Coh^{\omega, B})$, where the central charge is defined by 
\begin{equation}
\label{Equ:CenCharTodd}
Z^{td}_{\omega, B}(E)=-\int e^{-i\omega}\ch^B(E)\sqrt{\mathrm{td}(X)}.
\end{equation}

(iii) $\sigma_{V, D}=(Z_{V, D}, \Coh^{\omega, 0})$, where the central charge is defined by 
\begin{equation}
\label{Equ:CenCharVD}
Z_{V, D}(E)=-\ch_2(E)+V\ch_0(E)+i(\Theta+(D+e)f)\cdot \ch_1(E).
\end{equation}

\paragraph[Group actions on stability conditions] \label{para:Brimfdgroupactions} Given a triangulated category $\Tc$, there are two natural group actions on the space $\Stab (\Tc)$ of stability conditions on $\Tc$ \cite[Lemma 8.2]{StabTC}: a left action by the autoequivalence group $\Aut (\Tc)$, and a right action by the universal cover $\wt{\mathrm{GL}}^+\!(2,\mathbb{R})$ of $\mathrm{GL}^+(2,\mathbb{R})$.  

Given a stability condition $\sigma = (Z, \Ac) \in \Stab (\Tc)$ where $Z$ is the central charge and $\Ac$ the heart of a t-structure, the action of an element $\Phi \in \Aut (\Tc)$ on $\sigma$ is given by 
\[
  \Phi \cdot (Z, \Ac) = (Z \circ (\Phi^K)^{-1}, \Phi (\Ac))
\]
where $\Phi^K$ is the automorphism of the Grothendieck group $K(\Tc)$ induced by $\Phi$.  We will often simply write $\Phi^{-1}(E)$ instead of $(\Phi^K)^{-1}(E)$ for $E \in \Tc$.

Recall that we can describe elements of $\wt{\mathrm{GL}}^+\! (2,\mathbb{R})$ as pairs $(T,f)$ where $T \in \mathrm{GL}^+(2,\mathbb{R})$ and $f : \mathbb{R} \to \mathbb{R}$ is an increasing function satisfying $f(\phi+1)=f(\phi)+1$ for all $\phi$, such that the restriction of $f$ to $\mathbb{R}/2\mathbb{Z}$ coincides with the restriction of $T$ to $(\mathbb{R}^2\setminus \{0\})/\mathbb{R}_{>0}$.  Using this description of $\wt{\mathrm{GL}}^+\!(2,\mathbb{R})$, and supposing we write $\sigma = (Z,\Pc)$ where $\Pc$ is the slicing for $\sigma$, 
the action of an element $g =(T,f)$ on $\sigma$ is given by 
\[
 (Z,\Pc)\cdot g = (Z', \Pc')
\]
where $Z' = T^{-1}Z$ and $\Pc'(\phi) =\Pc (f(\phi))$ for all $\phi$.

\section{Statement of the question}\label{sec: que}

Let us recall some basic aspects of the deformed Hermitian-Yang-Mills (dHYM) equation.  Let $(X,\omega)$ be a compact K\"ahler manifold of dimension $n$ and let $\mathfrak{a} \in H^{1,1}(X,\mathbb{R})$ be a cohomology class.  The deformed Hermitian-Yang-Mills equation seeks a $(1,1)$-form $\alpha \in \mathfrak{a}$ such that
\[
\frac{(\omega+\sqrt{-1}\alpha)^n}{\omega^n}: X\rightarrow  \mathbb{R}_{>0}e^{\sqrt{-1}\hat{\theta}}
\]
for some constant $e^{\sqrt{-1}\hat{\theta}}\in S^1$.  The dHYM equation was discovered independently in \cite{MMMS, LYZ} as the equations of motion for a BPS brane (with abelian gauge group) in type IIB mirror symmetry.  In this vein it is worth recalling that the concept of $\Pi$-stability (a precursor to Bridgeland stability) was introduced by Douglas-Fiol-R\"omelsberger \cite{DFR} as an attempt to describe, in algebraic terms, BPS branes in type IIB mirror symmetry. Due to their shared physical origins, it is natural to ask how the existence of solutions to the dHYM equation is related to notions of stability in algebraic geometry, particularly Bridgeland stability.  Starting with the work of Collins-Jacob-Yau \cite{CJY}, and continuing in \cite{CollinsYaugeo, CollinsYau} the first and last named authors have pursued the following line of questioning: what are the necessary and sufficient algebraic conditions for the existence of a solution to the dHYM equation?  How are these conditions related to notions of Bridgeland stability?  We shall focus henceforth on the case when $\dim_{\mathbb{C}}X=2$, which is considerably simpler than the general case.  We refer the interested reader to the survey article \cite{CoShdHYM} and the references therein for further discussion of the higher dimensional case.  For progress relating algebraic conditions to existence of solutions to the dHYM equation, we refer the reader to \cite{GaoChen, ChuLeeTak, ChuLee, DatarPing} and the references therein.

When $\dim_{\mathbb{C}}X=2$ the following observation was made in \cite{CJY}, as a consequence of Yau's solution of the complex Monge-Amp\`ere equation \cite{YauMA} and the Nakai-Moishezon ampleness criterion (or more generally the Demailly-P\u{a}un theorem \cite{DP}):

\begin{lem}[\cite{CJY}, Proposition 8.5]\label{lem: dHYMSurf}
    Suppose $(X,\omega)$ is a K\"ahler surface, and let $C\subset X$ be a curve.  Define
    \[
    Z_{X} = -\int_{X}e^{-\sqrt{-1}(\omega+\sqrt{-1}\alpha)}, \quad Z_{C} = -\int_{C}e^{-\sqrt{-1}(\omega+\sqrt{-1}\alpha)}.
    \]
    Suppose that $\Im(Z_{X})>0$.  Then there exists a solution of the dHYM equation if and only if  for every curve $C\subset X$ there holds
    \begin{equation}\label{eq: GITObstruction}
     \Im\left(\frac{Z_{C}}{Z_{X}}\right) >0.
    \end{equation}
    
\end{lem}

We remark that Lemma~\ref{lem: dHYMSurf} gives a full description of the set of classes $\mathfrak{a}\in H^{1,1}(X,\mathbb{R})$ admitting a solution of the dHYM equation.  Indeed, if $\Im(Z_{X})<0$ then one can replace $\mathfrak{a} \mapsto -\mathfrak{a}$.  On the other hand, if $\Im(Z_{X})=0$ then the dHYM equation reduces to the Laplace equation, and hence $\mathfrak{a}$ always admits a solution of the dHYM equation.  Thus, Lemma~\ref{lem: dHYMSurf}  gives necessary and sufficient conditions for the existence of solutions to the dHYM equation, and is therefore a natural testing ground for any possible correspondence between existence of solutions and the dHYM equation.  Furthermore, the existence of Bridgeland stability conditions is well understood for complex surfaces \cite{ABL}. This was pursued by the first and third authors \cite{CoShdHYM}, who found the following result (direct consequence of Section 5 of Arcara-Miles \cite{ARCARA20161655})

\begin{prop}[\cite{CoShdHYM}, Theorem 4.15]\label{prop: GITimpliesBS}
    Let $X=\Bl_P(\mathbb{P}^2)$ be the blow-up of $\mathbb{P}^2$ in a point.  Let $\omega$ be any K\"ahler form and $L\rightarrow X$ any line bundle.  Then:
    \begin{itemize}
    \item If there is a solution of the dHYM equation on $(X,\omega)$ in the cohomology class $c_1(L)$, then $L$ is Bridgeland stable for a Bridgeland stability condition with central charge $Z=-e^{-\sqrt{-1}\omega}ch(L)$.
    
    \item Conversely, there exist K\"ahler forms $\omega$ and line bundles $L\rightarrow X$ that are Bridgeland stable, but do not satisfy~\eqref{eq: GITObstruction}, and hence cannot admit solutions of the dHYM equation.
    \end{itemize}
\end{prop}

Given Proposition~\ref{prop: GITimpliesBS}, one may (very optimistically) wonder whether~\eqref{eq: GITObstruction} always implies Bridgeland stability on K\"ahler surfaces.  Even more optimistically, one may wonder whether a similar phenomenon holds in all dimensions, after replacing~\eqref{eq: GITObstruction} with the higher dimensional obstructions from \cite{CollinsYau, CoShdHYM}.  From the point of view of Bridgeland stability, we are asking whether~\eqref{eq: GITObstruction} is an effective numerical criterion for Bridgeland stability of line bundles; it is worth recalling that determining whether or not a given line bundle is Bridgeland stable is highly non-trivial in general.  We therefore pose:

\begin{que}\label{que: motivating}
    Suppose $(X,\omega)$ is a K\"ahler surface and let $L\rightarrow X$ be a line bundle. Define
     \[
    Z_{X}(L) = -\int_{X}e^{-\sqrt{-1}\omega}ch(L), \quad Z_{C}(L) = -\int_{C}e^{-\sqrt{-1}\omega}ch(L).
    \]
    where $C\subset X$ is a curve.  Suppose that $\Im(Z_{X}(L))>0$ and suppose that for every curve $C\subset X$ we have
    \begin{equation}\label{eq: queIntersection}
    \Im\left(\frac{Z_{C}(L)}{Z_{X}(L)}\right)>0.
    \end{equation}
    Is $L$ Bridgeland stable?
\end{que}

We can reinterpret the intersection theoretic condition in Question~\ref{que: motivating} as a "twisted ampleness" condition.  To see this, suppose that $\Im(Z_{X}(L))\geq 0$ (otherwise replace $L$ with $-L$).  By the Hodge Index theorem we have $Z_{X}(L)\in \mathbb{R}_{>0}e^{\sqrt{-1}\hat{\theta}}$ for some $\hat{\theta}\in (0, \pi)$.  Then~\eqref{eq: queIntersection} is equivalent to
\begin{equation}\label{eq: twistAmple}
\int_{C}\cot(\hat{\theta})\omega+c_1(L) >0.
\end{equation}
It is not hard to show that if~\eqref{eq: twistAmple} holds for all curves $C\subset X$ then in fact the class $[(\cot(\hat{\theta})\omega+c_1(L)]$ is a K\"ahler class; see \cite[Section 8]{CJY}.  For the purpose of the current work, it will be useful to have some terminology (distinct from ``stability") to refer to this condition.

\begin{defn}
    Let $L\rightarrow(X,\omega)$ be a line bundle.  If the assumptions of Question~\ref{que: motivating} hold, then we shall say that $L$ is ``twisted ample" with respect to $\omega$, or satisfies the ``twisted ampleness criterion" with respect to $\omega$.
\end{defn}





\section{Geometricity of stability conditions}\label{sec:geom-goingup}


Let $X$ be an elliptic surface with a fixed section $\Theta$.  The goal of this section is to prove that any pure 1-dimensional sheaf with the same Chern character as $\OO_f$, the structure sheaf of a fiber, is Bridgeland stable.  This is needed in later sections, where we need to know that the images of Bridgeland stability conditions of the form $\sigma_{\omega, B}$ remain geometric under the autoequivalence $\Phi$ from \ref{para:WESauto}. We follow the idea and methods used in Arcara-Miles \cite{ARCARA20161655}.


\paragraph Let $\omega=\frac{1}{\sqrt{2D_\omega+e}}(\Theta+(D_\omega+e)f)$, $H=\frac{1}{\sqrt{2D_\omega+e}}(\Theta-D_\omega f)$. We have $\omega^2=1$, $H^2=-1$, $\omega\cdot H=0$. Let the $B$-field be $B=y\omega+zH$, $y$, $z\in \mathbb{R}$. Consider  stability conditions of the form $\sigma_{x\omega, B}$; we denote it by $\sigma_{x, y, z}$. Note that $\Coh^{x\omega, B}$ only depends on $y$, so we denote the heart $\Coh^{\omega, B}$ by $\Coh^y$. For $E\in D^b(X)$,
we denote the Chern character of $E$ by 
\begin{equation*}
	\ch(E)=(r, E_\omega\omega+E_HH+\alpha, d)
\end{equation*}
where $\alpha\in\langle \omega, H\rangle^\perp$.

\begin{defn} (\cite{ARCARA20161655, Mac14})
	A potential wall associated to $\cO_f$ is a subset of $\mathbb{R}^{>0}\times\mathbb{R}^2$ consisting of points $(x, y, z)$ such that 
	\begin{equation}
 \label{Equ:PotentialWall}
		\Re Z_{x, y, z}(E)\Im Z_{x, y, z}(\cO_f)=\Re Z_{x, y, z}(\cO_f)\Im Z_{x, y, z}(E)
	\end{equation}
	for some object $E$ of $D^b(X)$.
\end{defn}
The potential wall $W(E, \cO_f)$ in the $\mathbb{R}^3$ with coordinates $(x, y, z)$ are quadric surfaces:

\begin{equation*}
	\frac{1}{2}rx^2+\frac{1}{2}ry^2+\frac{1}{2}rz^2+ryz-E_\omega z-E_Hz-d=0.
\end{equation*}
Consider the intersection of $W(E, \cO_f)$ with the plane $x=0$. Since the discriminant is zero, this is a parabola,  a pair of parallel lines, or coincident lines in the $yz$-plane. Within the intersection of $W(E, \cO_f)$ with $z=z_0$, the wall is a semicircle with center $(x=0, y=-z_0)$.

Using the notations above, when $E\in \Coh^y(X)$, we have 
$$\phi_{x, y, z}(E)>\phi_{x, y, z}(\cO_f)$$
 when $(x, y, z)$ lies in the region bounded by the semicircle $W(E, \cO_f)$ and the $yz$-plane. 

Let $(x_0, y_0, z_0)\in \mathbb{R}^{>0}\times \mathbb{R}^2$.
Consider the following short exact sequence in $\Coh^{y_0}$:
\begin{equation*}
	0\rightarrow E\rightarrow \cO_f\rightarrow Q\rightarrow 0
\end{equation*}
with $\phi_{x_0, y_0, z_0}(E)>\phi_{x_0, y_0, z_0}(\cO_f)$.
\begin{prop}
The sheaf $E$ in the above short exact sequence is torsion-free.
\end{prop}
\begin{proof}
Assume $E$ is not torsion-free, and let 
$$0\rightarrow T\rightarrow E\rightarrow F\rightarrow 0$$
be a short exact sequence with $T$ a torsion sheaf and $F$ a torsion-free sheaf. 
Since $E$ destabilizes $\cO_f$, $E$ is not a torsion subsheaf of $\cO_f$, hence $E$ is not a torsion sheaf. 
Then we have either $T\simeq \cO_f$ or there is a short exact sequence: 
$$0\rightarrow T\rightarrow \cO_f\rightarrow Q'\rightarrow 0$$
for some torsion sheaf $Q'$ supported on dimension zero. 
By the snake lemma (or equivalently, applying the octahedral axiom to the composition $T \to E \to \OO_f$), we have the exact sequence:
$$0\rightarrow H^{-1}(Q)\rightarrow F\rightarrow Q'\rightarrow H^0(Q)\rightarrow 0.$$
Since $Q'$ and $H^0(Q)$ are both supported in dimension zero, we have $\ch_1(H^{-1}(Q))=\ch_1(F)$, and hence $\omega\ch_1(F) \leq 0$; since $E$ is a sheaf lying in  $\Coh^{y_0}$ with nonzero rank, however, we also have $\omega \ch_1(F) >0$ ($F$ being the torsion-free part of $E$), a contradiction.
\end{proof}

Denote the $\mu_\omega$-HN filtration of $E$ by 
\begin{equation*}
	0=E_0\subset E_1\subset E_2 ...\subset E_n :=E.
\end{equation*}
Denote the $\mu_\omega$-HN filtration of $H^{-1}(Q)$ by 
\begin{equation*}
	0=K_0\subset K_1\subset K_2 ...\subset K_m=K:=H^{-1}(Q).
\end{equation*}
Then $\mu_\omega(\mathrm{HN}_1(K))<y_0<\mu_\omega(\mathrm{HN}_n(E))$.

\begin{prop}
\label{Prop:SubobjOf}
If $E$ is a subobject of $\cO_f$ in $\Coh^{y_0}(X)$, then we have 

 (i) For each $1\leq i\leq m$, $E/K_i$ is a subobject of $\cO_f$ in $\Coh^{y_0}(X)$,
 
 (ii) For each $1\leq j\leq n-1$, $E_j$ is a subobject of $\cO_f$ in $\Coh^{y_0}(X)$.
\end{prop}
\begin{proof}
The proof of (i) follows simply from the definition of torsion class and torsion free class. The proof of (ii) is a consequence of $\mu_{\omega, B}(\text{HN}_1(\ker(E_j\rightarrow \cO_f)))\leq 0$.
\end{proof}
Recall the following version of Bertram's Lemma in \cite[Lemma 4.7]{ARCARA20161655}. Denote the intersection of $W(E, \cO_f)$ and the plane $\{z=z_0\}$ by $W(E, \cO_f)_{z_0}$.

\begin{prop}(\cite[Lemma 4.7]{ARCARA20161655}, Bertram's Lemma)
	\label{Prop:BerLem}
	Consider the stability condition $\sigma_{x_0, y_0, z_0}$. Assume that $E\subset \cO_f$ in $\Coh^{y_0}$ and $\ch_0(E)>0$. Then 
	
	(i) If $W(E, \cO_f)_{z_0}$ intersects $y=\mu_\omega(\HN_1(K))$ in the plane $z=z_0$, then the wall $W(E/K_1, \cO_f)_{z_0}$ is outside the wall $W(E, \cO_f)_{z_0}$.
	
	(ii) If $W(E, \cO_f)_{z_0}$ intersects $y=\mu_\omega(\HN_n(E))$ in the plane $z=z_0$,  then the wall $W(E_{n-1}, \cO_f)_{z_0}$ is outside the wall $W(E, \cO_f)_{z_0}$.
\end{prop}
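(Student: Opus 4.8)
The plan is to exploit the fact, already visible in the quadric computation above, that in the plane $z=z_0$ every potential wall $W(A,O_f)_{z_0}$ is a \emph{semicircle centred at the common point} $(x,y)=(0,-z_0)$, a centre depending only on $O_f$ and $z_0$ and not on $A$. Writing $\ch(A)=(A_r,\,A_\omega\omega+A_HH+\alpha_A,\,A_d)$ with $A_r>0$, the wall equation rearranges to
\[
  x^2+(y+z_0)^2=R(A)^2,\qquad R(A)^2=\frac{2\big[(A_\omega+A_H)z_0+A_d\big]}{A_r}.
\]
Hence any two such walls are concentric, so they are automatically nested, and ``$W(B,O_f)_{z_0}$ is outside $W(A,O_f)_{z_0}$'' means exactly $R(B)^2>R(A)^2$. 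Both bullets therefore reduce to a comparison of radii, and the whole proof becomes the problem of bounding $R$ in terms of Chern data.

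The key input I would isolate is a \emph{universal radius bound for $\mu_\omega$-semistable sheaves}: if $A$ is $\mu_\omega$-semistable with $A_r>0$ then $R(A)^2\le(\mu_\omega(A)+z_0)^2$, where $\mu_\omega(A)=A_\omega/A_r$. Geometrically this says the semicircle $W(A,O_f)_{z_0}$ cannot cross the vertical line $y=\mu_\omega(A)$ (the line on which $\Im Z_{x,y,z_0}(A)=0$) at any point with $x>0$; indeed on that line the wall equation, together with $\Im Z_{x,y,z_0}(O_f)\neq0$, would force $Z_{x,y,z_0}(A)=0$. Concretely, after clearing the positive denominator $A_r$ and completing the square, the inequality becomes
\[
  \big(\ch_1(A)^2-2A_rA_d-\alpha_A^2\big)+\big(A_H-A_rz_0\big)^2\ge 0,
\]
using $\ch_1(A)^2=A_\omega^2-A_H^2+\alpha_A^2$. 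The second term is a square, and the first is non-negative by the two decisive facts: the Bogomolov inequality $\ch_1(A)^2-2A_rA_d\ge0$ (valid precisely because $A$ is $\mu_\omega$-semistable) and the Hodge-index estimate $\alpha_A^2\le0$ (as $\alpha_A\perp\omega$ with $\omega^2>0$). Verifying that it is exactly the HN factors — which \emph{are} $\mu_\omega$-semistable — to which Bogomolov applies, and correctly pinning the sign of $\alpha_A^2$, is the crux where I expect the real work to lie.

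With this bound the two bullets follow formally. For the first, the hypothesis that $W(E,O_f)_{z_0}$ meets $y=\mu_\omega(HN_1(K))$ is exactly $R(E)^2\ge(\mu_\omega(HN_1(K))+z_0)^2$; applying the universal bound to the $\mu_\omega$-semistable factor $K_1=HN_1(K)$ gives $R(K_1)^2\le(\mu_\omega(K_1)+z_0)^2\le R(E)^2$. Since the long exact cohomology sequence of $0\to E\to O_f\to Q\to0$ identifies $K$ with a subsheaf of $E$, so that $K_1\subset E$ with $\ch(E)=\ch(K_1)+\ch(E/K_1)$, and since each component of $\ch$ is additive, $R(E)^2$ is a weighted mediant of $R(K_1)^2$ and $R(E/K_1)^2$; hence $R(E)^2\ge R(K_1)^2$ forces $R(E/K_1)^2\ge R(E)^2$, i.e.\ $W(E/K_1,O_f)_{z_0}$ lies outside $W(E,O_f)_{z_0}$. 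The second bullet is proved \emph{mutatis mutandis}: remove the minimal HN factor $HN_n(E)$ as a quotient, apply the universal bound to it using $R(E)^2\ge(\mu_\omega(HN_n(E))+z_0)^2$, and use the mediant for $0\to E_{n-1}\to E\to HN_n(E)\to0$ to conclude that the wall of the complementary sub $E_{n-1}$ is outside $W(E,O_f)_{z_0}$. The only point needing separate care is the degenerate case in which the complementary factor has rank $0$ (for instance $K_1=K$, so $\ch_0(E/K_1)=0$): there the wall is a vertical line rather than a semicircle, which is automatically outside every finite semicircle, so the statement persists.
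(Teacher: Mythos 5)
Your proposal is correct in substance, and it takes a genuinely different route from the paper's proof. The paper argues dynamically, in the Arcara--Bertram style: one moves along the wall $W(E,\OO_f)_{z_0}$ toward the vertical line $y=\mu_\omega(HN_1(K))$ (resp.\ $y=\mu_\omega(HN_n(E))$), shows that the Bridgeland slope of the Gieseker-semistable factor blows up to $+\infty$ (resp.\ $-\infty$) in the limit --- using exactly the Bogomolov--Gieseker inequality and the Hodge index theorem --- and then feeds this into the short exact sequence $0\to E\to E/K_1\to K_1[1]\to 0$ (resp.\ $0\to E_{n-1}\to E\to HN_n(E)\to 0$) in $Coh^{y}$ to conclude $\phi(E/K_1)>\phi(\OO_f)$ at a point of $W(E,\OO_f)_{z_0}$, which by nestedness of the concentric walls places the new wall outside. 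You replace this limiting argument by a static, purely numerical one: the explicit radius formula, the universal bound $R(A)^2\le(\mu_\omega(A)+z_0)^2$ for $\mu_\omega$-semistable $A$ of positive rank (your completed-square identity is precisely the Bogomolov-plus-Hodge-index computation that the paper performs asymptotically; your observation that crossing the line $y=\mu_\omega(A)$ at $x>0$ would force $Z(A)=0$ is the same mechanism), and the mediant/additivity of $\ch$ in place of the phase comparison from the short exact sequence. Your version is quantitatively sharper and avoids limits; note also that your conclusion in the second bullet concerns $E_{n-1}$, which is what the paper's proof actually establishes --- the ``$W(E_n,\OO_f)_{z_0}$'' in the statement is a typo, since $E_n=E$. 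One small slip: in the degenerate case where the complementary factor has rank $0$, the locus $W(E/K_1,\OO_f)_{z_0}$ is not a vertical line but is empty (or the whole plane), i.e.\ the radius is infinite; your mediant computation degenerates to $N(E/K_1)\ge 0$, which still yields the correct phase inequality, so the conclusion persists.

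One caveat you should be aware of. The stability conditions in this section are the Todd-twisted ones $\sigma^{td}_{x\omega,B}$, whose central charge carries an extra $-\ch_0$ term; with that term the radius becomes $R(A)^2=\frac{2[(A_\omega+A_H)z_0+A_d]}{A_r}+2$ and your universal bound degrades to $R(A)^2\le(\mu_\omega(A)+z_0)^2+2$, so your chain of inequalities only closes if the intersection with the vertical line occurs at a point with $x^2>2$. This is exactly the unstated hypothesis that the paper's own proof invokes (``when $x^2>2$, the numerator \ldots is less than zero''), and which its accompanying remark says can be dropped for central charges without the Todd class. Since the wall equation displayed in the paper omits this Todd correction, your derivation is internally consistent with the printed formula; but for $\sigma^{td}$ both your argument and the paper's need the $x^2>2$ proviso at the intersection point (or, alternatively, an appeal to the sharper K3 Bogomolov--Gieseker inequality, which still does not remove the discrepancy entirely).
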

\begin{proof}
The proof is the same as that of \cite[Lemma 4.7]{ARCARA20161655}, with $\cO_X$ replaced with $\cO_f$.
\end{proof}
Using the same ideas and methods as in Section 5 
 of \cite{ARCARA20161655}, we have the following Proposition.

\begin{prop}
	\label{Prop:AMOf}
	Given any $(x_0, y_0, z_0)\in \mathbb{R}^{>0}\times \mathbb{R}^2$, we have $\cO_f$ is $\sigma_{x_0, y_0, z_0}$-stable.
\end{prop}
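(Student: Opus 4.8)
The plan is to show that $O_f$ has no actual walls in the family $\sigma_{x,y,z}$, via a largest-wall argument built on Bertram's lemma (Proposition \ref{Prop:BerLem}). Since stability at a single $\sigma_{x_0,y_0,z_0}$ is detected inside the slice $z=z_0$, I would fix $z_0$ and work in the $(x,y)$-plane, where all numerical walls $W(\cdot\,,O_f)_{z_0}$ are concentric semicircles centered at $(x=0,\,y=-z_0)$ and hence totally ordered by their radius. First I would record that $O_f\in\Coh^{y_0}$ with $\Im Z_{x_0,y_0,z_0}(O_f)>0$ (because $\omega\cdot f>0$ once $x_0>0$), so that $O_f$ has a well-defined phase, and that any destabilizing sequence $0\to E\to O_f\to Q\to 0$ in $\Coh^{y_0}$ has $E$ and $K:=H^{-1}(Q)$ torsion-free with $\ch_0(E)>0$, exactly as set up in the excerpt.

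The heart of the argument is then a proof by contradiction. Suppose $O_f$ is strictly $\sigma_{x_0,y_0,z_0}$-semistable for some point of the slice, so the set of actual walls in the slice is nonempty. Because the walls are concentric and their radii are controlled by the discriminants of the positive-rank destabilizers together with the fixed data of $O_f$, I would argue that only finitely many numerical classes can produce walls meeting any bounded region, so that a largest actual wall $W=W(E,O_f)_{z_0}$ exists. Along $W$ the destabilizer satisfies $\mu_\omega(\HN_1(K))<y<\mu_\omega(\HN_n(E))$, and the endpoints of the wall arc are pinned at these two $\mu_\omega$-slopes; in particular $W$ meets the line $y=\mu_\omega(\HN_1(K))$ or the line $y=\mu_\omega(\HN_n(E))$. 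Bertram's lemma then furnishes a destabilizing object (one of $E/K_1$ or the penultimate HN piece $E_{n-1}$ of $E$) whose associated wall lies strictly outside $W$, the relevant phase comparison $\phi(E/K_1)>\phi(O_f)$ pushing $\sigma$ into the enclosed region. Since this object again strictly destabilizes $O_f$ on its own wall, we contradict the maximality of $W$. Hence no actual wall exists, $O_f$ is $\sigma_{x_0,y_0,z_0}$-stable, and letting $z_0$ vary gives the statement for all $x_0,y_0,z_0$.

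The step I expect to be the main obstacle is the boundedness input guaranteeing that a largest wall exists, i.e. ruling out an infinite ascending chain of ever-larger actual walls. This is delicate precisely because $O_f$ has rank zero, so the usual Bogomolov-type bound on wall radii must be run through the positive-rank objects $E$ and $K$ rather than through $O_f$ itself; I would control their discriminants using the Gieseker-stability estimates $\ch_2^B(\cdot)\le (\omega\cdot\ch_1^B(\cdot))^2/(2\,\ch_0^B(\cdot)\,\omega^2)$ that already appear in the proof of Bertram's lemma. A secondary technical point is verifying cleanly that the largest wall does meet one of the two distinguished vertical lines, which amounts to identifying the endpoints of the wall arc with the slopes $\mu_\omega(\HN_1(K))$ and $\mu_\omega(\HN_n(E))$, so that Proposition \ref{Prop:BerLem} applies without a gap.
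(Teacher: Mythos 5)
Your toolkit is the right one---contradiction, Bertram's lemma, and the nesting of the concentric semicircular walls---but your decision to fix the slice $z=z_0$ and run a largest-wall argument entirely inside it is where the proposal genuinely breaks, and it breaks precisely at the point you set aside as ``secondary.'' In a fixed slice the numerical wall $W(E,O_f)_{z_0}$ is a semicircle centered at $(0,-z_0)$, and nothing pins its endpoints to the lines $y=\mu_\omega(\HN_1(K))$ and $y=\mu_\omega(\HN_n(E))$: the heart-compatibility interval $\left(\mu_\omega(\HN_1(K)),\,\mu_\omega(\HN_n(E))\right)$ can strictly contain the whole $y$-range of the arc, in which case the destabilizing sequence never degenerates anywhere along the wall within that slice and Proposition \ref{Prop:BerLem} has no point of application; there is then no mechanism to produce a strictly larger actual wall and contradict maximality. (With the Todd-class central charge there is the additional constraint that $x^2$ be large enough for $\sigma_{x,y,z}$ to be a stability condition, so you cannot even follow the arc down toward $x=0$ to force an endpoint.) The paper's proof exists exactly to circumvent this: it treats $W(E,O_f)$ as a quadric surface in $(x,y,z)$ whose $x=0$ section is a parabola, classifies walls into type I and type II according to whether they are empty for $z\ll 0$ or $z\gg 0$, and locates an intersection of the wall with the plane $y=\mu_\omega(\HN_1(K))$ (resp.\ $y=\mu_\omega(\HN_n(E))$) and the plane $y+z=0$ at some \emph{other} slice $z_1$. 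Bertram's lemma is applied there, and the conclusion that $W(E/K_1,O_f)$ lies outside $W(E,O_f)$ is transported between slices using the fact that two distinct quadric walls can agree at at most one $z$-value. Varying the $H$-component of the $B$-field is thus not a convenience but the essential idea, and it is absent from your fixed-slice setup.

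Conversely, the step you flag as the main obstacle---existence of a largest actual wall---is not needed on the paper's route. The paper invokes no finiteness or boundedness of walls; it instead normalizes the destabilizer by rank (assuming the wall of any $F$ with $\mathrm{rk}(F)<\mathrm{rk}(E)$ lies inside $W(E,O_f)$) and then iterates Bertram's lemma through the \emph{finite} filtrations $K_1\subset\cdots\subset K_m=K$ and $E_1\subset\cdots\subset E_n=E$, terminating in an outright contradiction: in the type I case one ends with $E/K_m$, an honest subsheaf of $O_f$ which can never destabilize it, and in the type II case with $\phi_{x,y,z_1}(E_1)\rightarrow-\infty$ along a wall on which it must equal $\phi(O_f)$. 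If you insisted on your architecture, the largest-wall existence could likely be patched via the outermost-wall estimate for $1$-dimensional Gieseker-semistable sheaves (compare Lemma \ref{lem:LM-3-13}), since nested concentric walls bounded in the $x$-direction have bounded radius; but that repair would still leave the endpoint problem above unresolved, so the proposal as written does not close.
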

\begin{proof}
	
	We prove the claim by contradiction. Assume there is a stability condition $\sigma_{x_0, y_0, z_0}$ 
	such that $\cO_f$ is not stable. Then there exists a short exact sequence 
	\begin{equation*}
		0\rightarrow E\rightarrow \cO_f\rightarrow Q\rightarrow 0
	\end{equation*}
	in $\Coh^{y_0}$  where $\phi(E)>\phi(\cO_f)$. We can assume that for any object $F$ with $\mathrm{rk}(F)<\mathrm{rk}(E)$, $F$ destabilizes $\cO_f$ in $\sigma_{x_0, y_0, z_0}$, the wall $W(F, \cO_f)$ is inside $W(E, \cO_f)$.
	
	
	If $W(E, \cO_f)$ is a parabola, we call $W(E, \cO_f)$ a type I wall if $W(E, \cO_f)=\emptyset$ when $z\ll 0$. We call $W(E, \cO_f)$ a type II wall if $W(E, \cO_f)=\emptyset$ when $z\gg 0$. 
	
	First we consider wall of type I. Since $\mu_\omega(\HN_1(K))<y_0<\mu_\omega(\HN_n(E))$, and $y_0$ lies inside the semicircle $W(E, \cO_f)_{z_0}$ intersecting the $yz$-plane, we have the intersection of $W(E, O_f)_z$ and $y=\mu_\omega(\HN_1(K))$ is not empty for some $z$. 

	Let $(x_1, y_1, z_1)$ be the intersection point of $y=\mu_\omega(\HN_1(K))$, $y+z=0$ and $W(E, \cO_f)$. 
	By Bertram's Lemma, we have the wall $W(E/K_1, \cO_f)_{z_1}$ is outside $W(E, \cO_f)_{z_1}$. On the other hand, by Proposition \ref{Prop:SubobjOf} we have 
	$W(E/K_1, \cO_f)_{z_0}$ is inside $W(E, \cO_f)_{z_0}$.
	Since $W(E/K_1, \cO_f)$ and $W(E, \cO_f)$ are not the same. From the equation of the numerical walls, they can only intersect at at most one $z$-value. Then $W(E/K_1, \cO_f)_z$ is outside $W(E, \cO_f)_z$ for all $z>z_1$. In particular, $W(E/K_1, \cO_f)$ is also of type I. 
	
	Since $E/K_i$ are subobjects of $\cO_f$ in $\Coh^{y_0}$ for all $i$. We can repeat the above argument for all $W(E/K_{i}, \cO_f)$. At the $i$-th step, let $(x_i, y_i, z_i)$ be the intersection point of $y=\mu_\omega(\HN_i(K))$, $y+z=0$ and $W(E/K_{i-1}, \cO_f)$. 
	By Bertram's Lemma, we have $W(E/K_{i}, \cO_f)_{z_i}$ is outside $W(E/K_{i-1}, \cO_f)_{z_i}$, and hence outside $W(E, \cO_f)_{z_i}$. By Proposition \ref{Prop:SubobjOf}, we have $W(E/K_{i}, \cO_f)_{z_0}$ is inside $W(E, \cO_f)_{z_0}$, hence $W(E/K_{i}, \cO_f)_z$ is outside $W(E, \cO_f)_z$ for $z>z_i$. Finally we have $W(E/K_{m-1}, \cO_f)_z$ is outside $W(E, \cO_f)_z$ for all $z>z_{m-1}$. Then $W(E/K_{m-1}, \cO_f)$ intersects $y=\mu_\omega(\HN_m(K))$ implies
	$$\phi_{x, y, z}(E/K)>\phi_{x, y, z}(\cO_f)$$ at some $\sigma_{x, y, z}$. But $E/K$ is a subsheaf of $\cO_f$, and can never destabilize $\cO_f$. We obtain a contradiction.
	
	We are left with considering wall of type II. Then we have the intersection of $W(E, \cO_f)$ and $y=\mu_\omega(\HN_i(E))$ is not empty. 
	Let $(x^n, y^n, z^n)$ be the intersection point of $y=\mu_\omega(\HN_n(E))$, $y+z=0$ and $W(E, \cO_f)$. 
	By Bertram's Lemma, we have $W(E_{n-1}, \cO_f)_{z_n}$ is outside $W(E, \cO_f)_{z_n}$. By Proposition \ref{Prop:SubobjOf}, we have $W(E_{n-1}, \cO_f)_{z_0}$ is inside $W(E, \cO_f)_{z_0}$, hence $W(E_{n-1}, \cO_f)_{z}$ is outside the wall $W(E, \cO_f)_{z}$  
	for all $z<z^{n}$. In particular $W(E_{n-1}, \cO_f)$ is also of type II. Applying the same argument repeatedly, finally we have $W(E_1, \cO_f)_z$ is outside wall $W(E, \cO_f)_z$ for all $z<z^2$. This implies $W(E_1, \cO_f)$ intersects $y=\mu_\omega(\HN_1(E))$. Let $(x^1, y^1, z^1)$ be the intersection point of $y=\mu_\omega(\HN_1(E))$, $y+z=0$ and $W(E_1, \cO_f)$. In the plane $z=z^1$, let $y\rightarrow \mu_\omega(\HN_1(E))^-$ along $W(E_1, \cO_f)_{z_1}$, we have $\phi_{x, y, z_1}(E_1)\rightarrow -\infty$, but $\phi_{x, y, z_1}(E_1)=\phi_{x, y, z_1}(\cO_f)$, we obtain a contradiction.
	
	The proof of the case $W(E, O_f)$ is a pair of parallel lines is similar to either the type I or type II case. 
\end{proof}

\begin{rem}
	\label{Rk:Geom}
	Note that the proof only used that $\cO_f$ is pure (in the proof of $E$ is torsion free by snake lemma and $E/K$ does not destabilize $\cO_f$ in the proof of Proposition \ref{Prop:AMOf}), and $\ch(\cO_f)$ (in the computation of $W(E, \cO_f)$). Hence we can replace $\cO_f$ by any pure sheaf $F\in \Coh(X)$ such that $\ch(F)=\ch(\cO_f)$, and the statement of Proposition \ref{Prop:AMOf} still holds.  
\end{rem}

\begin{prop} \label{Prop:Geo}
	Let $X$ be a Weierstra{\ss} elliptic surface, and $\Phi$ the autoequivalence as in \ref{para:WESauto}. 
	For any $D_\omega>0$, $V_{\omega}>0$, $B\in\langle \Theta, f\rangle_{\mathbb{R}}$, the stability condition 
	$\Phi \cdot \sigma_{\omega, B}$ is geometric. 
\end{prop}
\begin{proof}
	Let $\sigma_{\omega, B}$ be a stability condition such that $D_{\omega}>0$, $V_{\omega}>0$. Let $F$ be a 1-dimensional pure sheaf such that $\ch(F)=\ch(\cO_f)$. By Proposition \ref{Prop:AMOf} and Remark \ref{Rk:Geom} we know that $F$ is stable in $\sigma_{\omega, B}$. This implies that $\Phi(F)$ is stable in $\Phi\cdot\sigma_{\omega, B}$. Since any skyscraper sheaf $\C(x)$ is of the form $\Phi(F)$ for some pure sheaf $F$ with $\ch(F)=\ch(\cO_f)$, we have $\C(x)$ is stable in $\Phi\cdot\sigma_{\omega, B}$ for any $x\in X$. 
\end{proof}


\section{An equation for Bridgeland stability conditions under group actions}\label{sec:mainstabeq}



When we apply an autoequivalence $\Phi$ of the derived category to a Bridgeland stability condition of the form $\sigma_{\omega, B}$, we often want explicit descriptions of the central charge and the heart of the resulting stability condition $\Phi \cdot \sigma_{\omega, B}$.  We are able to obtain such explicit descriptions as long as we know $\Phi \cdot \sigma_{\omega, B}$ is a geometric stability condition, such as in the setting of Proposition \ref{Prop:Geo}.

For any $a \in \mathbb{R}$ and $\mathbb{R}$-divisors $\omega, B$, let us write
\[
  Z_{a,  \omega, B} = -\ch_2^B + a\ch_0^B + i \omega \ch_1^B.
\]

\begin{prop}\label{prop:AG52-20-1}
Let $p : X \to Y$ be a Weierstra{\ss} elliptic surface, and suppose $\omega, B \in \langle \Theta, f \rangle_{\mathbb{R}}$ with $\omega$  ample.  Then $\Phi \cdot \sigma_{\omega, B}$ is a geometric stability condition, and  there exist $a'\in \mathbb{R}$, $\mathbb{R}$-divisors $\omega', B'$ on $X$ with $\omega'$  ample, and $\wt{g} \in \wt{\mathrm{GL}}^+\! (2,\mathbb{R})$   such that 
\begin{equation}\label{eq:AG52-21-1}
   \Phi \cdot \sigma_{\omega, B} \cdot \wt{g} = (Z_{a', \omega', B'}, \Coh^{\omega', B'}).
\end{equation}
\end{prop}


\begin{proof}
From the computations in \cite[Appendix A]{LM2}, we know that we can find $a' \in \mathbb{R}$ and $\omega', B' \in \langle \Theta, f\rangle_{\mathbb{R}}$ with $\omega'$ ample, and $g \in \mathrm{GL}^+\! (2,\mathbb{R})$, such that 
\[
  Z_{\omega, B}(\Phi^{-1}(-)) = gZ_{a', \omega', B'}(-).
\]
Note that $\Phi \cdot \sigma_{\omega, B}$ being geometric  follows from Proposition \ref{Prop:Geo}.

Now, suppose $\wt{g} \in \wt{\mathrm{GL}}^+\! (2,\mathbb{R})$ is a lift of $g$ such that with respect to  $\Phi \cdot \sigma_{\omega, B} \cdot \wt{g}$, which has central charge
\[
g^{-1}Z_{\omega, B}(\Phi^{-1}(-))=Z_{a', \omega', B'}(-),
\]
all the skyscraper sheaves $\OO_x$ have phase 1.  Then by \cite[Lemma 3.7]{LM2} (also see \cite[Lemma 6.20]{MSlec}), we know that if $\mathcal{P}$ denotes the slicing of  $\Phi \cdot \sigma_{\omega, B} \cdot \wt{g}$, then $\Pc (0,1] = \Coh^{\omega', B'}$ as wanted.
\end{proof}

\paragraph \label{para:DVsymmetricBzero} In Proposition \ref{prop:AG52-20-1}, the parameters $a', \omega', B'$ and $\wt{g}$ can all be solved explicitly in terms of $\omega, B$ (see \cite[Section 10]{Lo20}).  In particular, when we assume that $B, B'$ are of the form $B=lf, B'=qf$ for $l, q \in \mathbb{R}$,  we can solve for $a', \omega', B'$ using the following simple, symmetric relations (see also \cite[(10.2.2)]{Lo20}):
\begin{equation}\label{eq:AG50-118-1}
  l = \tfrac{e}{2} + q, \text{\quad} V_{\omega} = D_{\omega'}, \text{\quad} D_{\omega} = V_{\omega'} \text{\quad and $a'=V_{\omega'}$},
\end{equation}
and \eqref{eq:AG52-21-1} reads
\begin{equation}\label{eq:AG52-21-2}
  \Phi \cdot \sigma_{\omega, B} \cdot \wt{g} = \sigma_{\omega', B'}.
\end{equation}

\begin{rem}
    The symmetry in \eqref{eq:AG50-118-1} between the $V$- and the $D$-coordinates for $\omega$ and $\omega'$ in the case $B \in \mathbb{Z}f$ was first noticed in the second author's joint work with Martinez and Liu \cite{LLMII}.
\end{rem}

\begin{rem}\label{rem:omegaintremoval}
The original form of Proposition \ref{prop:AG52-20-1} was \cite[Theorem 11.7]{Lo20}, where we required $D_\omega \in \mathbb{Z}$ and $B \in \mathbb{Z}f$.  The integrality conditions were used to ensure that $\whPhi \OO_x$ is Bridgeland stable for any skyscraper sheaf $\OO_x$.  In \cite[Proposition 5.1]{LM2}, the integrality conditions were replaced with an estimate for the outermost Bridgeland wall for torsion sheaves.  By focusing on only torsion sheaves with the same Chern character as $\whPhi \OO_x$ in Proposition \ref{Prop:AMOf}, we are able to improve \cite[Theorem 11.7]{Lo20} to the form of  Proposition \ref{prop:AG52-20-1}.
\end{rem}



\section{A criterion for stability of line bundles}\label{sec:astabcriforlbs}

In this section, we give a region in the ample cone in which twisted ampleness of $L$ always implies the stability of $L$.

\textbf{Notation.} When the $B$-field is zero, we will drop the subscript $B=0$ in notations such as $\mu_{\omega, 0}, \sigma_{\omega, 0}, \Ac_{\omega,0}, Z_{\omega,0}, \phi_{\omega, 0}$,  and write $\mu_\omega, \sigma_\omega, \Ac_\omega, Z_\omega, \phi_\omega$, respectively.

\begin{thm}\label{thm:main1}
Let $X$ be a smooth projective surface.  Suppose $L$ is a line bundle on $X$, and $\omega$ is an ample divisor on $X$.  Let  $\alpha = c_1(L)$, and suppose that the inequalities
\begin{align}
  \omega \alpha &>0 \label{eq:ineq00}\\
  \omega^2 &\geq \omega \alpha  \label{eq:ineq2-b}
\end{align}
hold.  Then when $\alpha$ is twisted ample with respect to $\omega$, the line bundle $L$ is $\sigma_{\omega}$-stable.
\end{thm}

Note that when we assume  \eqref{eq:ineq00}, the inequality \eqref{eq:ineq2-b} can be rewritten as 
\begin{equation}\label{eq:ineq2}
 \omega^2 - \frac{(\omega\alpha)^2}{\omega^2}\geq 0.
\end{equation}
Also, when we assume both \eqref{eq:ineq00} and \eqref{eq:ineq2-b}, we have  
\[
(\omega^2)^2 \geq (\omega \alpha)^2 \geq (\omega^2)(\alpha^2)
\]
where the second inequality follows from the Hodge Index Theorem.  This gives 
\begin{equation}\label{eq:ineq1}
     \omega^2 - \alpha^2 \geq 0
\end{equation}
 which is equivalent to $\Re Z_\omega (L)\geq 0$, i.e.\ $\phi_\omega (L) \in (0,\tfrac{1}{2}]$.


\begin{proof}[Proof of Theorem \ref{thm:main1}]
Suppose $L$ is strictly $\sigma_\omega$-semistable, so that we have an $\Ac_\omega$-short exact sequence
\begin{equation}\label{eq:AG50-98-1}
0 \to A \to L \to Q \to 0
\end{equation}
where $Q$ is $\sigma_\omega$-semistable and $\phi_\omega (L)=\phi_\omega (Q)$.    Let $Q^i$ denote $H^i(Q)$ for $i=-1, 0$.

Note that $Q^0$ is a torsion sheaf by \cite[Lemma 4.1]{ARCARA20161655}. Then the canonical map $Q \to Q^0$ is an $\Ac_\omega$-surjection, and so the $\sigma_\omega$-semistability of $Q$ implies $\phi_\omega (Q) \leq \phi_\omega (Q^0)$, i.e.\
\begin{equation}
  \frac{\ch_2(Q) - \tfrac{\omega^2}{2}\ch_0(Q)}{\omega\ch_1(Q)} \leq \frac{\ch_2(Q^0)}{\omega \ch_1(Q^0)}.
\end{equation}
On the other hand, from the equality $\phi_\omega (L) = \phi_\omega (Q)$ we have
\begin{equation}\label{eq:AG50-99-2}
\frac{\alpha^2 - \omega^2}{2\omega \alpha} = \frac{\ch_2(L)-\tfrac{\omega^2}{2}\ch_0(L)}{\omega \ch_1(L)} = \frac{\ch_2(Q)-\tfrac{\omega^2}{2}\ch_0(Q)}{\omega \ch_1(Q)}.
\end{equation}
We now give a lower bound for $\ch_2(Q)=\ch_2(Q^0)-\ch_2(Q^{-1})$.

From \cite[Lemma 4.1]{ARCARA20161655} again, we know that in the long exact sequence of sheaves for \eqref{eq:AG50-98-1}, the kernel of the surjective morphism of sheaves $L \to Q^0$  is of the form $L \otimes I_Z(-C)$ for some effective curve $C$ and $0$-dimensional subscheme $Z \subset X$, so that we have  a short exact sequence of sheaves
\[
0 \to L \otimes I_Z(-C) \to L \to Q^0 \to 0
\]
where $I_Z$ is the ideal sheaf of some $0$-dimensional subscheme $Z$ of $X$.  Moreover, since \eqref{eq:AG50-98-1} is a destabilising sequence, we know $C$ is a negative curve from \cite[Proposition 5.5]{ARCARA20161655}.  Let $n$ denote the length of $Z$.  Then $\ch(I_Z)=(1,0,-n)$ and
\begin{equation}\label{eq:AG50-98-2}
\ch (Q^0)=\ch(L)-\ch (L \otimes I_Z(-C)) = ( 0, C+\alpha, -\tfrac{C^2}{2}+n+\alpha C ).
\end{equation}

As for $Q^{-1}$, let $J_1, \cdots, J_m$ denote the $\mu_\omega$-HN factors of $Q^{-1}$ with decreasing slopes.  Then for each $i$ we have $0 \leq \omega \ch_1(J_i[1]) < \omega \alpha$.  Also, since $J_i$ is $\mu_\omega$-semistable, the Bogomolov-Gieseker inequality gives
\[
  \ch_2 (J_i) \leq \frac{ \ch_1(J_i)^2}{2\ch_0(J_i)}
\]
while the Hodge Index Theorem gives
\[
  \ch_1(J_i)^2 \leq \frac{ (\omega \ch_1(J_i))^2}{\omega^2}.
\]
Putting all these together gives
\[
  \ch_2(J_i) < \frac{ (\omega \alpha)^2}{2 \omega^2 \ch_0(J_i)}
\]
for each $i$.  Consequently,
\begin{equation}\label{eq:AG50-99-1}
  \ch_2(Q^{-1}) = \sum_{i=1}^m \ch_2 (J_i) <(-s) \frac{ (\omega\alpha)^2}{2 \omega^2}
\end{equation}
where $s$ denotes the rank of $Q$ (which equals $-\ch_0(Q^{-1})$) and so
\begin{align}
   \ch_2 (Q) - \tfrac{\omega^2}{2}\ch_0(Q) & >  \ch_2 (Q^0) + s \frac{ (\omega \alpha)^2}{2 \omega^2} - s \frac{\omega^2}{2} \notag \\
   &= \ch_2(Q^0) + \frac{s}{2} \left( \frac{ (\omega \alpha)^2}{ \omega^2} - \omega^2 \right). \label{eq:AG50-99-3}
\end{align}

Since  \eqref{eq:ineq1} holds, from \eqref{eq:AG50-99-2} we see that the right-hand side of  \eqref{eq:AG50-99-3} is non-positive.  Then, since $0 < \omega C \leq \omega \ch_1(Q)$, it follows that
\[
  \frac{ \ch_2 (Q) - \tfrac{\omega^2}{2}\ch_0(Q)}{\omega \ch_1(Q)} >  \frac{\ch_2(Q^0) + \frac{s}{2} \left( \frac{ (\omega \alpha)^2}{ \omega^2} - \omega^2 \right)}{\omega C}.
\]
Rearranging, we obtain
\begin{equation}\label{eq:AG50-99-5}
\left( \frac{\omega^2-\alpha^2}{2\omega \alpha} \omega + \alpha\right)C < \frac{C^2}{2} -n + \frac{s}{2}\left( \omega^2 - \frac{ (\omega \alpha)^2}{\omega^2}\right).
\end{equation}
Since $C$ is a negative, effective curve and $n \geq 0$, when \eqref{eq:ineq2} holds and the twisted ampleness criterion holds, the left-hand side of \eqref{eq:AG50-99-5} is positive while the right-hand side of \eqref{eq:AG50-99-5} is negative, which is a contradiction.  This shows that in the region of the ample cone given by \eqref{eq:ineq00} and \eqref{eq:ineq2-b}, there cannot be any Bridgeland walls for the object $L$.  

Note that if $\wt{\omega}$ is any ample class satisfying  \eqref{eq:ineq00} and \eqref{eq:ineq2-b}, then $t\wt{\omega}$ also satisfies \eqref{eq:ineq00} and \eqref{eq:ineq2-b} for any $t\geq 1$.  From \cite[Lemma 4.8]{LM2}, we know that   for any ample class $\wt{\omega}$, there exists $t_0>1$ depending on $L$ and $\wt{\omega}$ such that $L$ is $\sigma_{t\wt{\omega}}$-stable for all $t>t_0$, it follows that $L$ must be $\sigma_\omega$-stable whenever both  \eqref{eq:ineq00} and \eqref{eq:ineq2-b} are satisfied.
\end{proof}

\section{Answer to the question for $D_\alpha <0$}\label{sec:Dalphanegative}

In Theorem \ref{thm:main1}, we identified a region in the ample cone where, if $L$ is twisted ample with repsect to $\omega$, then $L$ is Bridgeland stable.  In this section, we restrict to elliptic surfaces and begin  using autoequivalences of the derived category to extend this region of stability.

\begin{lem}\label{lem:AG50-132-1}
Let $p : X \to Y$ be an elliptic surface with a section $\Theta$.  Then a divisor of the form $\Theta + af$ where $a \in \mathbb{R}_{>0}$ is ample if and only if $a > -\Theta^2$.
\end{lem}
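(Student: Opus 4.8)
The plan is to prove both directions via the Nakai--Moishezon criterion, using the standard intersection numbers on a Weierstra{\ss} elliptic surface, namely $\Theta^2 = -e$, $\Theta \cdot f = 1$, and $f^2 = 0$. Write $M = \Theta + af$. The forward direction is immediate: the section $\Theta = s(Y)$ is isomorphic to the smooth base curve $Y$, hence is an irreducible reduced curve on $X$, so if $M$ is ample then $M \cdot \Theta > 0$. Since $M \cdot \Theta = \Theta^2 + a(\Theta \cdot f) = -e + a$, this forces $a > e = -\Theta^2$, as claimed.

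For the reverse direction, assume $a > e$ (recall $e \geq 0$ on a Weierstra{\ss} elliptic surface, so this also gives $a > 0$). First I would verify the self-intersection condition: $M^2 = \Theta^2 + 2a(\Theta \cdot f) + a^2 f^2 = 2a - e$, and since $a > e \geq 0$ we get $M^2 = a + (a - e) > 0$. It then remains to check $M \cdot C > 0$ for every irreducible curve $C \subset X$, after which Nakai--Moishezon yields ampleness. I would organize this by whether $C$ is vertical or horizontal for $p$. If $C$ is vertical, it is contained in a fiber; here I invoke the Weierstra{\ss} hypothesis that every fiber is geometrically integral, so $C$ equals that fiber and $[C] = f$, giving $M \cdot C = M \cdot f = \Theta \cdot f = 1 > 0$. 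If $C$ is horizontal with $C \neq \Theta$, then $f \cdot C = \deg(C/Y) \geq 1$ while $\Theta \cdot C \geq 0$ (distinct irreducible curves meet non-negatively), so $M \cdot C = \Theta \cdot C + a\,(f \cdot C) \geq a > 0$. Finally, for $C = \Theta$ itself we have $M \cdot \Theta = a - e > 0$ by assumption.

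\textbf{Main obstacle.} The only non-formal step is the exhaustive case analysis over all irreducible curves, and its crux is the integrality of the fibers: this is precisely what guarantees that a vertical irreducible curve is a full fiber of class $f$ rather than a proper fiber component. For a general elliptic surface with reducible fibers the statement would in fact fail, since a fiber component disjoint from $\Theta$ would satisfy $M \cdot C = 0$; so I would make explicit that the argument uses the Weierstra{\ss} assumption (integral fibers) coming from the setup in~\ref{para:def-Wellsurf}. With that geometric input in hand, both the vertical and horizontal cases reduce to the elementary inequalities above, and the proof is complete.
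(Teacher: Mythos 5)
Your proposal is correct and takes essentially the same route as the paper's proof: the Nakai--Moishezon criterion combined with a case analysis over $C=\Theta$, vertical curves, and horizontal curves, with the self-intersection check $M^2 = 2a-e>0$. The one point you make explicit that the paper leaves implicit is that the vertical case rests on the integrality of the fibers (so that a vertical irreducible curve is a whole fiber of class $f$, forcing $\Theta\cdot C>0$), and your observation that the statement would fail in the presence of reducible fibers is a correct and worthwhile caveat.
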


\begin{proof}
By the Nakai-Moishsezon criterion, $\Theta + af$ is ample if and only if $(\Theta + af)^2=2a-e>0$ and $(\Theta + af)C>0$ for every irreducible curve $C \subset X$.

Suppose $C$ is an irreducible curve on $X$ that is distinct from $\Theta$.  Then $\Theta C\geq 0$ and $fC \geq 0$.  If $fC=0$, then $C$ is a vertical divisor, in which case $\Theta C>0$ and $(\Theta +af)C>0$.  If $fC>0$, then $(\Theta +af)C>0$ as well.

Now suppose  $C=\Theta$.  Then $(\Theta + af)C=a+\Theta^2$.  Since $a>0$, it now follows that $\Theta + af$ is ample if and only if $a>-\Theta^2$.
\end{proof}


The following result is a special case of \cite[Lemma 4.2(d)]{Martinez2017} (see also \cite[Lemma 6.2]{ARCARA20161655}):

\begin{lem}\label{lem:ArcMil-1}
Let $X$ be a smooth projective surface and $\omega$ an ample divisor on $X$.  If $L$ is a line bundle on $X$ such that $\omega \ch_1(L)>0$, then $L$ is $\sigma_\omega$-stable if and only if $L^\vee [1]$ is $\sigma_\omega$-stable.
\end{lem}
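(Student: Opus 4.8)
The plan is to realise the passage $L \rightsquigarrow L^\vee[1]$ as the action of the derived dualising functor $\mathbb{D} := R\HHom(-,\OO_X)[1]$, a contravariant autoequivalence of $D^b(X)$ with $\mathbb{D}^2 \cong \mathrm{id}$. Since $L$ is locally free, $\mathbb{D}(L)=L^\vee[1]$ with no higher $\EExt$-sheaves appearing, and likewise $\mathbb{D}(L^\vee[1])=L$. On Chern characters one has $\ch(\mathbb{D}E)=(-\ch_0(E),\, \ch_1(E),\, -\ch_2(E))$, so substituting into the central charge $Z_\omega(E)=-\ch_2(E)+\tfrac{\omega^2}{2}\ch_0(E)+i\,\omega\ch_1(E)$ gives $Z_\omega(\mathbb{D}E)=-\overline{Z_\omega(E)}$. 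Thus $\mathbb{D}$ fixes the imaginary part $\omega\ch_1$ and negates the real part, sending an object of phase $\phi_\omega$ to one of phase $1-\phi_\omega$. Because $\omega\alpha>0$, the line bundle $L$ lies in the torsion part $\Tc_\omega$ of $\Ac_\omega$, while $L^\vee[1]$ is a shifted slope-negative line bundle lying in $\Fc_\omega[1]\subset\Ac_\omega$; so it is meaningful to compare their $\sigma_\omega$-stability.

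The underlying principle is standard: a contravariant autoequivalence that preserves the heart and reflects the central charge as $Z\mapsto-\overline Z$ interchanges subobjects and quotients while reversing phase inequalities, and hence carries $\sigma_\omega$-(semi)stable objects of phase $\phi$ to $\sigma_\omega$-(semi)stable objects of phase $1-\phi$. Granting this, the two implications of the lemma become equivalent via the involution $\mathbb{D}^2\cong\mathrm{id}$, so it suffices to show that $L$ not stable forces $L^\vee[1]$ not stable.

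The main obstacle is that $\mathbb{D}$ does \emph{not} preserve $\Ac_\omega$ globally: it sends $0$-dimensional torsion sheaves and the non-locally-free loci of torsion-free sheaves into cohomological degree $1$ (via $\EExt^2$ and $\EExt^1$ respectively), so the clean heart-preservation principle cannot be applied verbatim. I would bypass this by exploiting that the objects in play are (shifted) line bundles, hence locally free. Concretely, starting from a destabilising short exact sequence $0\to A\to L\to B\to 0$ in $\Ac_\omega$ with $\phi_\omega(A)\ge\phi_\omega(L)$, I would control the cohomology sheaves of $A$ and $B$ using the structure theory for subobjects of a line bundle in $\Ac_\omega$ from \cite[Lemma 4.1]{ARCARA20161655} (as already used in the proof of Theorem \ref{thm:main1}): $H^{-1}(A)\hookrightarrow L$ is either $0$ or of the form $L\otimes I_Z(-C)$, and the associated quotients are assembled from a torsion sheaf and a torsion-free sheaf of controlled slope. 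This rigidity is exactly what forces the triangle $\mathbb{D}B\to L^\vee[1]\to \mathbb{D}A\to\mathbb{D}B[1]$ obtained by applying $\mathbb{D}$ to be a genuine short exact sequence in $\Ac_\omega$, with $\mathbb{D}A\in\Ac_\omega$ of phase $1-\phi_\omega(A)\le 1-\phi_\omega(L)=\phi_\omega(L^\vee[1])$. That proper quotient then destabilises $L^\vee[1]$; the reverse implication follows by running the same argument on $L^\vee[1]$ and using $\mathbb{D}^2\cong\mathrm{id}$.

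Alternatively, one could sidestep the case analysis by the more conceptual route of verifying that $\mathbb{D}$ carries $\sigma_\omega$ to a $\GLtr$-translate of itself within the distinguished component, and then quoting \cite[Lemma 4.2(d)]{Martinez2017} directly; but the hands-on approach above keeps the proof self-contained for the rank-one objects we actually need.
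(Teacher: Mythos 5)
Your reduction hinges on the claim that for a destabilising sequence $0\to A\to L\to B\to 0$ in $\Ac_\omega$, applying $\mathbb{D}=R\HHom(-,\OO_X)[1]$ yields a \emph{genuine short exact sequence} $0\to\mathbb{D}B\to L^\vee[1]\to\mathbb{D}A\to 0$ in $\Ac_\omega$, with the rank-one structure theory of \cite[Lemma 4.1]{ARCARA20161655} ``forcing'' this exactness. That is precisely where the argument fails. Lemma 4.1 produces kernels of the form $L\otimes I_Z(-C)$ with $Z$ a possibly \emph{nonempty} $0$-dimensional subscheme, so $H^0(B)$ generally contains $0$-dimensional torsion; for such $B$ the dual $\mathbb{D}B$ acquires nonzero cohomology in heart degree $1$ --- concretely $H^1_{\Ac_\omega}(\mathbb{D}B)$ receives a contribution from $\EExt^2(\OO_Z,\OO_X)\neq 0$ --- so $\mathbb{D}B\notin\Ac_\omega$ and the map $L^\vee[1]\to\mathbb{D}A$ is \emph{not} an $\Ac_\omega$-epimorphism. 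A parallel problem arises from slope-zero $\mu_\omega$-HN factors of $H^{-1}(B)$, whose duals land in $\Fc_\omega$ rather than $\Tc_\omega$. Since destabilising walls for line bundles typically do involve $Z\neq\emptyset$, this is not a negligible boundary case: as written, your phase comparison $\phi_\omega(\mathbb{D}A)\leq\phi_\omega(L^\vee[1])$ never gets off the ground because $\mathbb{D}A$ has not been exhibited as a quotient of $L^\vee[1]$ in the heart. Your closing symmetry step (``run the same argument on $L^\vee[1]$'') has a second soft spot: subobjects of $L^\vee[1]$ in $\Ac_\omega$ are not governed by \cite[Lemma 4.1]{ARCARA20161655}, which concerns subobjects of line bundles sitting in $\Tc_\omega$.

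The gap is reparable, but repairing it essentially reproduces the known proof rather than shortcutting it. One first checks $\mathbb{D}(\Ac_\omega)\subset D^{[0,1]}_{\Ac_\omega}$ (this holds for all generators of the heart), so dualising any $\Ac_\omega$-short exact sequence gives a five-term exact sequence $0\to H^0_{\Ac_\omega}(\mathbb{D}B)\to L^\vee[1]\to\mathbb{D}A\to H^1_{\Ac_\omega}(\mathbb{D}B)\to 0$ in $\Ac_\omega$, in which the correction term $H^1_{\Ac_\omega}(\mathbb{D}B)$ is built from $0$-dimensional sheaves and shifts of slope-zero torsion-free sheaves, all of which satisfy $Z_\omega\in\mathbb{R}_{<0}$ (for the slope-zero pieces this uses Bogomolov--Gieseker plus Hodge index, as in the paper's kernel computations). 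Since adding a negative real number to a central charge only increases the phase, $\phi_\omega\bigl(H^0_{\Ac_\omega}(\mathbb{D}B)\bigr)\geq 1-\phi_\omega(B)\geq\phi_\omega(L^\vee[1])$, and this subobject destabilises $L^\vee[1]$; one must also check $H^0_{\Ac_\omega}(\mathbb{D}B)\neq 0$, which follows because $\omega\ch_1(L)>0$ gives $\phi_\omega(L)<1$ and so rules out $B$ being $0$-dimensional. This bookkeeping is exactly the content of \cite[Lemma 4.2]{Martinez2017} and \cite[Lemma 6.2]{ARCARA20161655}. Be aware that the paper itself offers no proof: it states the lemma as a special case of \cite[Lemma 4.2(d)]{Martinez2017}, so the ``alternative route'' in your final paragraph is in fact the paper's actual route, and your hands-on version only becomes correct once the truncation argument above is supplied.
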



\begin{lem}\label{lem:lbfdonetr}
Let $p : X \to Y$ be a Weierstra{\ss} elliptic surface, and $\Phi$ the autoequivalence of $D^b(X)$ as in \ref{para:WESauto}.  
\begin{itemize}
\item[(i)] If $L$ is a line bundle on $X$ satisfying $f\ch_1(L)=1$, then the transform $\Phi L$ is also a line bundle sitting at degree $0$ in $D^b(X)$. 
\item[(ii)] If $L$ is a line bundle on $X$ satisfying $f\ch_1(L) \leq 0$, then $L$ is $\Phi$-WIT$_1$.  In particular, $\Phi \OO_X$ is a torsion sheaf.
\item[(iii)] If $L$ is a line bundle on $X$ satisfying $f\ch_1(L)=-1$, then $\Phi L$ is  a line bundle sitting at degree $1$ in $D^b(X)$.
\end{itemize}
\end{lem}



\begin{proof}
Let $L$ be a line bundle on $X$.

(i) Assume  $f\ch_1(L)=1$.  For any closed point $y \in Y$, the restriction $L|_{p^{-1}(y)}$ is a line bundle of degree $1$ on the fiber $p^{-1}(y)$, and so is $\Phi_y$-WIT$_0$ by \cite[Proposition 6.38]{FMNT}.  Then, by \cite[Lemma 3.6]{Lo11}, it follows that $L$ itself is $\Phi$-WIT$_0$.  By \cite[Corollary 5.4]{Lo7}, the transform $\Phi L$ is a locally free sheaf.  Since $\ch_0(\Phi L)=f\ch_1(L)=1$, the transform of $L$ is a line bundle.

(ii) Assume $f\ch_1(L) \leq 0$.   For a general closed point $y \in Y$, we know by  \cite[Corollary 3.29]{FMNT} that  the restriction $\OO_X|_{p^{-1}(y)}$  is $\Phi_y$-WIT$_1$; since $\OO_X$ has no fiber subsheaves, it follows by \cite[Lemma 3.18]{Lo11} that $\OO_X$ itself is $\Phi$-WIT$_1$.  The second part of (ii)  follows easily from the formula for the cohomological Fourier-Mukai transform for $\Phi$.

(iii) In this case, $L^\vee$ is a line bundle satisfying $f\ch_1(L^\vee)=1$, and so $\Phi (L^\vee)$ is a line bundle by part (i).  From \cite[Theorem 2.3]{LZ3}, we have
\[
  (\Phi L)^\vee \cong \iota^\ast ( \Phi (L^\vee)) \otimes p^\ast M [1]
\]
for some involution $\iota : X \to X$ and some line bundle $M$ on $Y$.  This implies that $(\Phi L)^\vee$ is a line bundle sitting at degree $-1$ in $D^b(X)$, and the claim follows.
\end{proof}


\paragraph \label{para:spcase1} Suppose $p : X \to Y$ is a Weierstra{\ss} elliptic surface, and $L$ is a line bundle on $X$ satisfying $fc_1(L)=1$.  Then by Lemma \ref{lem:lbfdonetr}, $L$ is $\Phi$-WIT$_0$ and the transform $\wh{L} = \Phi L$ is again a line bundle.  In addition, if we write $c=\Theta c_1(L)$ and $s=\ch_2(L)$, then
\[
  c_1(\wh{L}) = -c_1(L)+(c+\tfrac{e}{2}+s)f
\]
from the cohomological FMT formulas in \ref{para:cohomFMT}.  Now suppose $\omega, B, \omega', B' \in \langle \Theta, f \rangle_{\mathbb{R}}$ are divisors satisfying the relations \eqref{eq:AG50-118-1}, where $\omega, \omega'$ are ample and $B=0$ (so $l=0$ and $B'=-\tfrac{e}{2}f$).  Then  we have
\begin{align}
 \text{$L$ is $\sigma_{\omega,0}$-stable } &\Leftrightarrow \text{ $\wh{L}[1]$ is $\sigma_{\omega',B'}$-stable} \text{ by \eqref{eq:AG52-21-2}} \notag\\
 &\Leftrightarrow \text{ $(\wh{L}\otimes \OO_X(\tfrac{e}{2}f))[1]$ is $\sigma_{\omega',0}$-stable} \text{ by \cite[(13.4.1)]{Lo20}}. \label{eq:Lstabequiv}
\end{align}

Let us write $\wt{L} = \wh{L} \otimes \OO_X (\tfrac{e}{2}f)$.  Then
\[
  c_1(\wt{L}) = c_1(\wh{L}) + \tfrac{e}{2}f = -c_1(L)+(c+s+e)f.
\]
Let us also write $\alpha = c_1(L)$ and $\wt{\alpha} = c_1(\wt{L})$.  If we  specialise to the case where $\alpha$ is of the form
\[
  \alpha = \Theta + (D_\alpha + e)f
\]
for some $D_\alpha \in \mathbb{R}$, then $\Theta c_1(L) =D_\alpha, \ch_2(L)=V_\alpha=D_\alpha + \tfrac{e}{2}$ and
\[
  \wt{\alpha} = -\Theta + (D_\alpha + \tfrac{e}{2}) f.
\]

\paragraph \label{para:Dapos-comp1} Assume the setting of \ref{para:spcase1}.  To make our computations more concrete, let us assume that $\omega \in \langle \Theta, f \rangle_{\mathbb{R}}$ so  it can be written in the form
\[
 \omega = R_\omega (\Theta + (D_\omega + e)f)
\]
for some $R_\omega, D_\omega  \in \mathbb{R}_{>0}$. Then 
\[
  \omega \alpha = R_\omega (D_\omega + D_\alpha +e), \text{\quad} R_\omega^2 = \frac{V_\omega}{D_\omega + \tfrac{e}{2}},
\]
and the inequality \eqref{eq:ineq2}, which is
\[
\omega^2 - \frac{(\omega\alpha)^2}{\omega^2}\geq 0,
\]
can be rewritten as
\begin{equation}\label{eq:ineq2RVD}
2V_\omega - \frac{ (D_\omega + D_\alpha + e)^2}{2(D_\omega + \tfrac{e}{2})} \geq 0.
\end{equation}


Next, recall that the twisted ampleness criterion for $\alpha$ with respect to $\omega$ is equivalent to
\[
  \frac{\omega^2 - \alpha^2}{2\omega \alpha}\omega +\alpha
\]
being ample.  Since
\begin{equation}\label{eq:algobs}
\frac{\omega^2 - \alpha^2}{2\omega \alpha}\omega +\alpha = \left( \frac{V_\omega - D_\alpha - \tfrac{e}{2}}{D_\omega + D_\alpha + e} +1\right)\Theta + \left( \frac{V_\omega - D_\alpha - \tfrac{e}{2}}{D_\omega + D_\alpha + e}(D_\omega + e) + (D_\alpha + e) \right)f,
\end{equation}
the twisted ampleness criterion for $\omega, \alpha$ corresponds to
\begin{equation}\label{eq:ineq3RVD}
\left( \frac{V_\omega - D_\alpha - \tfrac{e}{2}}{D_\omega + D_\alpha + e}(D_\omega + e) + (D_\alpha + e) \right) > e\left( \frac{V_\omega - D_\alpha - \tfrac{e}{2}}{D_\omega + D_\alpha + e} +1\right)
\end{equation}
by Lemma \ref{lem:AG50-132-1}. Under our assumption $\omega \alpha >0$, which is equivalent to $D_\omega+ D_\alpha + e>0$, the above inequality can be rewritten as
\begin{equation}\label{eq:ineq3RVD-v2}
  D_\omega (V_\omega - \tfrac{e}{2}) + D_\alpha (D_\alpha + e) > 0.
\end{equation}

\paragraph It can also be useful to  consider the ``transforms'' of the conditions in Theorem \ref{thm:main1} that ensure the stability of a line bundle.  Let us continue with the setup in \ref{para:Dapos-comp1}. Note that
\[
  \omega' \wt{\alpha} = R_\omega'(\Theta + (D_{\omega'}+e)f)(-\Theta + (D_\alpha+\tfrac{e}{2}) f) = R_{\omega'}(D_\alpha + \tfrac{e}{2} - V_\omega),
\]
and so $\omega' \wt{\alpha}>0$ if and only if $V_\omega < D_\alpha + \tfrac{e}{2}$. Writing $(\wt{\alpha})^\vee = c_1 ( (\wt{L})^\vee)$, we have $\omega' (\wt{\alpha})^\vee >0$ if and only if
\begin{equation}\label{eq:w'a'pos}
  V_\omega > D_\alpha+ \tfrac{e}{2}.
\end{equation}
Since $V_\omega$ is positive, this suggests that we should apply Theorem \ref{thm:main1} to $\omega'$ and $(\wt{L})^\vee$, which means we need to consider the analogues of  \eqref{eq:ineq2} and the twisted ampleness criterion for $\omega'$ and $(\wt{L})^\vee$.  

The analogue of \eqref{eq:ineq2} for $\omega', (\wt{L})^\vee$ is
\[
  (\omega')^2 - \frac{ (\omega' \wt{\alpha})^2}{ (\omega')^2} \geq 0
\]
which is equivalent to
\begin{equation}\label{eq:ineq2p}
4D_\omega \geq  \frac{ (V_\omega - D_\alpha- \tfrac{e}{2})^2}{(V_\omega + \tfrac{e}{2})} .
\end{equation}
Lastly, the twisted ampleness criterion for $\omega', (\wt{L})^\vee$ corresponds to the ampleness of
\[
  -\frac{ (\omega')^2-(\wt{\alpha})^2}{2\omega' \wt{\alpha}} \omega' - \wt{\alpha}
  = \left( \frac{D_\omega + D_\alpha + e}{V_\omega - D_\alpha- \tfrac{e}{2}} + 1\right)\Theta + \left( \frac{D_\omega + D_\alpha + e}{V_\omega - D_\alpha- \tfrac{e}{2}} (V_\omega + e) - (D_\alpha+ \tfrac{e}{2})\right)f,
\]
which in turn corresponds to
\begin{equation}\label{eq:algobstrans}
\left( \frac{D_\omega + D_\alpha + e}{V_\omega - D_\alpha- \tfrac{e}{2}} (V_\omega + e) - (D_\alpha+ \tfrac{e}{2})\right) > e\left( \frac{D_\omega + D_\alpha + e}{V_\omega - D_\alpha- \tfrac{e}{2}} + 1\right).
\end{equation}

\begin{rem}
In the computations above, switching from $\omega, L$ to $\omega', (\wt{L})^\vee$ amounts to  making the  change of variable
\begin{align*}
  D_\omega &\mapsto V_\omega \\
  V_\omega &\mapsto D_\omega \\
  D_\alpha &\mapsto -D_\alpha -\tfrac{3e}{2}
\end{align*}
in the sense that $\omega \alpha >0$ is transformed to $\omega'(\wt{\alpha})^\vee>0$, $\omega^2 - \frac{(\omega \alpha)^2}{\omega^2} \geq 0$ is transformed to $(\omega')^2 - \frac{ (\omega' \wt{\alpha})^2}{ (\omega')^2} \geq 0$, and $\frac{\omega^2-\alpha^2}{2\omega \alpha}\omega + \alpha$ is transformed to $ -\frac{ (\omega')^2-(\wt{\alpha})^2}{2\omega' \wt{\alpha}} \omega' - \wt{\alpha}$.
\end{rem}

\begin{thm}\label{thm:conj-Daotcase}
Let $p : X \to Y$ be a Weierstra{\ss} elliptic surface.  Suppose $e=2$, that  $\alpha = \Theta +f$ or $\Theta$ (i.e.\ $D_\alpha = -1$ or $-2$) and $\omega \in \langle \Theta, f \rangle_{\mathbb{R}}$ is ample.  Then for any line bundle $L$ with $c_1(L)=\alpha$,  the twisted ampleness criterion for $\omega, \alpha$ implies the $\sigma_\omega$-stability of  $L$.
\end{thm}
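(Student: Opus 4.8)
The plan is to prove this theorem by combining Theorem \ref{thm:main1} (the algebraic-obstruction stability criterion) with the going-up lemma (Proposition \ref{Prop:StabvarVol}) and the relative Fourier-Mukai transform $\Phi$, exploiting the $D_\omega \leftrightarrow V_\omega$ symmetry encoded in \eqref{eq:AG50-118-1}. Since $D_\alpha \in \{-1,-2\}$ is negative here, this falls into the $D_\alpha<0$ case described in the introduction, where the strategy is to establish stability on a restricted region via Theorem \ref{thm:main1}, then use autoequivalences and the going-up lemma to propagate stability across the entire region $\omega\alpha>0$.

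\textbf{Step 1: Identify the region where Theorem \ref{thm:main1} applies directly.} First I would translate the hypotheses of Theorem \ref{thm:main1} into the RDV coordinates via \eqref{eq:ineq1RVD}, \eqref{eq:ineq2RVD}, and the vanishing-of-obstruction condition \eqref{eq:ineq3RVD-v2}. For the specific values $e=2$ and $D_\alpha \in \{-1,-2\}$, these become explicit inequalities in $(D_\omega, V_\omega)$. Since $D_\alpha<0$, the obstruction inequality $D_\omega(V_\omega-1) + D_\alpha(D_\alpha+2)>0$ and \eqref{eq:ineq2RVD} should carve out a subregion of the first quadrant (roughly, where $V_\omega$ is large enough relative to $D_\omega$) on which $L$ is already known to be $\sigma_\omega$-stable. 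I would verify that this subregion is nonempty and record its defining inequalities.

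\textbf{Step 2: Propagate stability upward and sideways using the going-up lemma and $\Phi$.} The going-up lemma (Proposition \ref{Prop:StabvarVol}, upgraded to line bundles via the remark following it) gives that stability at $(D_\omega, V_\omega)$ implies stability at $(D_\omega, V_\omega')$ for all $V_\omega'>V_\omega$. As the introduction explains, conjugating the going-up lemma by the Fourier-Mukai transform $\Phi$ — which swaps $D_\omega$ and $V_\omega$ by \eqref{eq:AG50-118-1} — yields horizontal propagation as well, and combining the two gives stability at every $(D_{\omega'}, V_{\omega'})$ with $D_{\omega'}>D_\omega$ and $V_{\omega'}>V_\omega$. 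To invoke $\Phi$ I must check the hypotheses of Theorem \ref{thm:autoequiv1}: by Corollary \ref{lem:1dimshoutmowall}, $\whPhi\OO_x$ is $\sigma_\omega$-stable precisely when $4V_\omega(D_\omega+1)>1$, so I would confirm this geometricity condition holds throughout the relevant part of the $(D_\omega,V_\omega)$-plane (or handle the exceptional locus separately). The equivalence \eqref{eq:Lstabequiv} relating $\sigma_{\omega,0}$-stability of $L$ to $\sigma_{\omega',0}$-stability of $\wt{L}$ is what makes the transform usable directly on the line bundle $L$.

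\textbf{Step 3: Cover the entire region $\omega\alpha>0$.} Finally I would argue that the starting region from Step 1, once swept out by the combined up-and-sideways propagation of Step 2, covers the full region defined by $\omega\alpha>0$ (equivalently $D_\omega + D_\alpha + e>0$, i.e.\ $D_\omega>-D_\alpha-2\in\{-1,0\}$, which for $D_\alpha\in\{-1,-2\}$ is automatic on the positive quadrant). The main obstacle I anticipate is a geometric packing argument: showing that the ``upward and rightward'' cones emanating from the Step-1 region genuinely exhaust $\{\omega\alpha>0\}$, and in particular reaching points with \emph{small} $D_\omega$ and \emph{small} $V_\omega$ near the boundary. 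Because $D_\alpha<0$, the obstruction region should extend close enough to the axes that every target point lies above-and-right of some already-stable point, but verifying this requires a careful check that the Step-1 inequalities and the monotone propagation are compatible along the boundary $\omega\alpha=0$. I expect the negativity of $D_\alpha$ to be exactly what prevents the pathology (noted in the introduction) that forces the $D_\alpha\geq 0$ case to resort to weak stability conditions.
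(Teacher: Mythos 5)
There is a genuine gap, and it sits exactly where you flagged it: the ``packing'' claim in your Step 3 is false for $D_\alpha=-1$. With $e=2$, $D_\alpha=-1$, the obstruction region is $V_\omega>1+1/D_\omega$ (this is \eqref{eq:eg1-3}) and the region where Theorem \ref{thm:main1} applies directly also requires \eqref{eq:egA-eq3}, i.e.\ $4V_\omega\geq D_\omega+1$. Now take the target point $(D_\omega,V_\omega)=(100,\,1.02)$: the obstruction holds ($1.02>1.01$), so the theorem must produce stability there. A sweep seed would be a point $(D',V')$ with $D'\leq 100$, $V'\leq 1.02$ at which both the obstruction and \eqref{eq:egA-eq3} hold. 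But the obstruction with $V'\leq 1.02$ forces $1/D'<0.02$, i.e.\ $D'>50$, while \eqref{eq:egA-eq3} forces $D'\leq 4V'-1\leq 3.08$ --- a contradiction. Since the going-up moves (Lemmas \ref{lem:Yun-goingup} and \ref{lem:Yun-goingup-transf}) only \emph{increase} $D_\omega$ and $V_\omega$, no seed exists and the sweep can never reach points hugging the obstruction boundary at large $D_\omega$. This is not an accident: the sweep strategy is the one the paper uses in Theorem \ref{thm:Dalphaneg-main1}, and it works there because the neighbourhood argument at the point $P=(-(D_\alpha+e),0)$ needs $g(-(D_\alpha+e),0)=(D_\alpha+e)(D_\alpha+\tfrac{e}{2})>0$, which requires $D_\alpha<-e$ strictly; at $D_\alpha=-1,-2$ (with $e=2$) this quantity degenerates to $0$, and these boundary cases are precisely why the statement at hand gets a separate proof. (For $D_\alpha=-2$ the obstruction is just $V_\omega>1$ and a seed such as $(\min(D_\omega,2),\min(V_\omega,1.2))$ does exist, so your sweep could be salvaged in that case --- but not for $D_\alpha=-1$.)

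The missing idea is that the paper does no sweeping at all here; it runs a dichotomy in which the Fourier--Mukai transform transports the \emph{criterion}, not just the stability. If \eqref{eq:egA-eq3} holds, Theorem \ref{thm:main1} applies at $\omega$ and one is done. If it fails, i.e.\ $4V_\omega<D_\omega+1$, one combines this failure with the obstruction \eqref{eq:eg1-3} (which gives $V_\omega>1$ and hence $D_\omega>3$) to verify the \emph{transformed} hypotheses \eqref{eq:ineq1p}, \eqref{eq:ineq2p} and \eqref{eq:algobstrans} for the pair $\bigl(\omega',(\wt{L})^\vee\bigr)$, where $(D_{\omega'},V_{\omega'})=(V_\omega,D_\omega)$ --- note that in the problematic example above the transformed point is $(1.02,100)$, where the criterion holds comfortably. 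Then Theorem \ref{thm:main1} applied on the transformed side gives $\sigma_{\omega'}$-stability of $(\wt{L})^\vee$, and Lemma \ref{lem:ArcMil-1} together with \eqref{eq:Lstabequiv} (whose use of Theorem \ref{thm:autoequiv1} is licensed by checking \eqref{eq:ineq0}, as you correctly anticipated) converts this back into $\sigma_\omega$-stability of $L$. In short: your Step 2 uses $\Phi$ only to turn vertical propagation into horizontal propagation, whereas the proof requires applying the stability criterion a second time at the $\Phi$-partner point; without that move the region you can certify is strictly smaller than the obstruction region.
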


\begin{proof}
Assume that the twisted ampleness criterion holds for $\omega, \alpha$, i.e.\ \eqref{eq:ineq3RVD} holds.  Note that $D_\omega >0$ by Lemma \ref{lem:AG50-132-1}.

\textbf{Case 1: $e=2, D_\alpha=-1$.}  In this case, $\omega \alpha =R_\omega (D_\omega +1)$  is positive.    The inequality $\omega^2 - \frac{(\omega \alpha)^2}{\omega^2} \geq 0$ reduces to
\begin{equation}\label{eq:egA-eq3}
4V_\omega \geq D_\omega + 1,
\end{equation}
while the twisted ampleness criterion for $\omega, \alpha$, equation \eqref{eq:ineq3RVD}, reduces to
\begin{equation}\label{eq:eg1-3}
  V_\omega  > 1 + \frac{1}{D_\omega}.
\end{equation}
If \eqref{eq:egA-eq3} holds, then $L$ is $\sigma_\omega$-stable by Theorem \ref{thm:main1}.  So let us assume that \eqref{eq:egA-eq3} fails from here on, i.e.\ we assume
\begin{equation}\label{eq:ineqfail-1}
 4V_\omega < D_\omega +1.
\end{equation}
We will show that $L$ is still $\sigma_\omega$-stable under this assumption,

Since $\omega' (\wt{\alpha})^\vee = R_{\omega'}V_\omega  >0$, we will consider $\omega'$ and the line bundle $(\wt{L})^\vee$ instead.  The inequality $(\omega')^2 - \frac{(\omega' \wt{\alpha})^2}{(\omega')^2} \geq 0$  reduces to
\begin{equation}\label{eq:egA-eq3p}
4D_\omega \geq \frac{V_\omega^2}{V_\omega + 1},
\end{equation}
while the twisted ampleness criterion for $\omega', (\wt{\alpha})^\vee$ reduces to  $D_\omega >1$.

From \eqref{eq:eg1-3} we see $V_\omega >1$, and so by \eqref{eq:ineqfail-1} we have $D_\omega +1>4$, i.e.\ $D_\omega >3$.  This implies  the twisted ampleness criterion for $\omega', (\wt{\alpha})^\vee$.  Lastly, \eqref{eq:ineqfail-1} also gives $4D_\omega > 16V_\omega - 4$.  Using $V_\omega >1$, it is easy to check that
\[
  16V_\omega -4 \geq \frac{V_\omega^2}{V_\omega + 1}.
\]
The inequality \eqref{eq:egA-eq3p} then follows.  We can now apply  Theorem \ref{thm:main1} to the ample divisor $\omega'$ and the line bundle $(\wt{L})^\vee$ to conclude that $(\wt{L})^\vee$ is $\sigma_{\omega'}$-stable.  By Lemma \ref{lem:ArcMil-1}, this means $\wt{L}[1]$ is $\sigma_{\omega'}$-stable, which in turn is equivalent to the $\sigma_\omega$-stability of $L$ by \eqref{eq:Lstabequiv}.

\textbf{Case 2: $e=2, D_\alpha = -2$.} In this case, $\omega \alpha = R_\omega D_\omega$ is positive.  Also,  \eqref{eq:ineq2RVD} reduces to $4V_\omega (D_\omega + 1) \geq D_\omega^2$.  Let us assume that the twisted ampleness criterion for $\omega, \alpha$ holds; from  \eqref{eq:ineq3RVD}, this means
\[
  \frac{V_\omega + 1}{D_\omega}(D_\omega + 2) > 2\left( \frac{V_\omega +1}{D_\omega} + 1\right)
\]
which is equivalent to $V_\omega >1$.  As in Case 1, it suffices to consider the scenario where \eqref{eq:ineq2RVD} fails, so we assume
\begin{equation}\label{eq:ineq-fail-2}
  4V_\omega (D_\omega + 1) < D_\omega ^2
\end{equation}
from now on.  Note that this implies $D_\omega^2 > 4V_\omega D_\omega$, i.e.\ $D_\omega > 4V_\omega$.

Since $\omega' (\wt{\alpha})^\vee =R_{\omega'}(V_\omega+1)>0$ in this case, we will prove that $(\wt{L})^\vee$ is $\sigma_{\omega'}$-stable.  As in Case 1, this will follow once we establish the inequalities  \eqref{eq:ineq2p} and  \eqref{eq:algobstrans}.  In this case, the inequality \eqref{eq:algobstrans} reduces to
\[
  \frac{D_\omega }{V_\omega + 1}(V_\omega + 2)+1 > 2\left(\frac{D_\omega}{V_\omega + 1} + 1\right)
\]
which is equivalent to $D_\omega >1+\tfrac{1}{V_\omega}$.  Since $D_\omega >4V_\omega$ and $V_\omega >1$ from the previous paragraph, \eqref{eq:algobstrans} certainly holds.   Also,
\eqref{eq:ineq2p} reads
\[
4D_\omega  \geq (V_\omega + 1)
\]
in this case.  Since $V_\omega >1$, we have
\[
  4D_\omega > 16V_\omega >15V_\omega + 1>V_\omega + 1
\]
and so \eqref{eq:ineq2p} holds.  Hence $(\wt{L})^\vee$ is $\sigma_{\omega'}$-stable, and the same argument as in Case 1 shows $L$ itself is $\sigma_\omega$-stable.
\end{proof}

On a Weierstra{\ss} elliptic surface $X$ with $e=2$, in order to answer  Question \ref{que:main-1} beyond the case of $D_\alpha = -1, -2$ as in Theorem \ref{thm:conj-Daotcase}, we need to use autoequivalences of the derived category in a more systematic way.  We begin with an observation that the stability of a line bundle is preserved as the volume of the ample divisor increases. 

\paragraph Let $X$ be a projective surface. Let $\omega$ be an ample divisor in $\NS_\mathbb{R}(X)$ such that $\omega^2=1$. Let $H$ be an arbitrary divisor in $\NS_\mathbb{R}(X)$ such that $H\cdot\omega=0$ and $H^2=-1$. 
Let $L$ be a line bundle on $X$. Let $E\in D^b(X)$, we denote the Chern character of $E$ by $(r, c, d)$, where $c=E_\omega \omega+E_H H+\alpha$ for $\alpha\in\langle \omega, H\rangle^\perp$.
Similarly, we denote the Chern character of $L$ by $(1, l, e)$, where $l=l_\omega \omega+l_H H+\beta$ for $\beta\in \langle \omega, H\rangle^\perp$. We consider stability conditions of the form $\sigma_{x\omega, B}$, where $B=y\omega+zH$. 
Then the potential wall $W(E, L)$ defined by  \eqref{Equ:PotentialWall} is a quadratic surface in $\mathbb{R}^{>0}\times\mathbb{R}^2$:
\begin{equation}
\label{walle}
\begin{split}
\frac{1}{2}x^2(rl_\omega-E_\omega)+\frac{1}{2}y^2(rl_\omega-E_\omega)+\frac{1}{2}z^2(rl_\omega-E_\omega)-yz(rl_H-E_H)\\
+y(d-er)-z(E_Hl_\omega-l_HE_\omega)-(dl_\omega-eE_\omega)=0.
\end{split}
\end{equation}
For any $z$, the intersection $W(E, L)_{z}$ is either empty or a semicircle.

\begin{prop}[Going-up Lemma, Version 1]
	\label{Prop:StabvarVol}
	If $L$ is stable at $\sigma_{x_0\omega, B}$ at some $x_0$, $B=y_B\omega+z_BH$, then $L$ is stable for $\sigma_{x\omega, B}$ for all $x>x_0$.
\end{prop}
\begin{proof}
 Assume that $L$ is unstable at $\sigma_{x_1\omega, B}$ for some $x_1>x_0$.  Then there exists a short exact sequence in $\Coh^{y_B}$:
 \begin{equation}
 \label{Equ:GoingupLem}
0\rightarrow E\rightarrow L\rightarrow Q\rightarrow 0
\end{equation}
 such that $E$ destabilizes $L$ at $\sigma_{x_1\omega, B}$. Then in the plane $z=z_B$, the point $(x_1, y_B, z_B)$ is inside the region bounded by the semicircle $W(E, L)_{z_B}$ and the $z$-axis. Then the points $(x, y_B, z_B)$ are all in the region bounded by the semicircle $W(E, L)_{z_B}$ and the $z$-axis for $0<x<x_1$. Since the short exact sequence \ref{Equ:GoingupLem} is a short exact sequence in $\Coh^{x\omega, B}$ for all $x<x_1$, we have $$\phi_{x, y_B, z_B}(E)>\phi_{x, y_B, z_B}(L)$$ for all $0<x<x_1$, contradicting $L$ is stable at $\sigma_{x_0\omega, B}$. 
\end{proof}

The following lemma is a  variation of Proposition \ref{Prop:StabvarVol} on a smooth projective surface.  It says that for a twisted $\omega$-Gieseker stable sheaf $E$ (such as a $\mu_\omega$-stable torsion-free sheaf), if there exists a mini-wall on  the ray corresponding to $\{t\omega\}_{t>0}$, then $E$ must be Bridgeland stable on the far side of this wall, while unstable on the near   side (the side closer to the origin) of this wall.

\begin{lem}[Going-up Lemma, Version 2]\label{lem:uniqueminiwall}
Let $X$ be a smooth projective surface.  Suppose $H, B$ are $\mathbb{R}$-divisors on $X$ with $H$ ample. Consider the 1-parameter family of Bridgeland stability conditions
\[
  \sigma_{H,B,t} = (Z_{H,B,t}, \Coh^{H,B})
\]
where
\[
  Z_{H,B,t} = -\ch_2^B + t\ch_0^B + iH\ch_1^B.
\]
Suppose $E$ is a torsion-free, $B$-twisted $H$-Gieseker stable sheaf on $X$, and $t=t_0$ is the outermost mini-wall on the ray $\{\sigma_{H,B,t}\}_{t>0}$ with respect to the Chern character $\ch(E)$.  Then $E$ is stable (resp.\ unstable) with respect to  $\sigma_{H,B,t}$ for all $t>t_0$ (resp.\ $0<t<t_0$).
\end{lem}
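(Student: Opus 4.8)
The plan is to exploit the feature that makes this particular ray special: along it the numerical walls collapse to single points, so every phase comparison reduces to the sign of an affine function of $t$. For the central charge $Z_{H,B,t} = -\ch_2^B + t\ch_0^B + iH\ch_1^B$, the imaginary part $\Im Z_{H,B,t} = H\ch_1^B$ is independent of $t$, whereas the real part is affine in $t$. Consequently, for any subobject $F \hookrightarrow E$ in the ($t$-independent) heart $\Bc_{H,B}$ with $\Im Z(F) > 0$, the quantity
\[
  g_F(t) = \Re Z_{H,B,t}(E)\,\Im Z_{H,B,t}(F) - \Re Z_{H,B,t}(F)\,\Im Z_{H,B,t}(E)
\]
is affine in $t$, and $F$ destabilizes $E$ at $\sigma_{H,B,t}$ exactly when $g_F(t) > 0$, the associated mini-wall being the unique zero of $g_F$. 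This linearity is the engine of the whole argument: a fixed subobject can cross the phase of $E$ at most once along the ray. As the large-volume input I would record the standard identification of $\sigma_{H,B,t}$-stability on sheaves of class $\ch(E)$ with $B$-twisted $H$-Gieseker stability for $t \gg 0$ (the same principle underlying Lemma \ref{lem:LM-3-13}); the hypothesis that $E$ is $B$-twisted $H$-Gieseker stable then gives that $E$ is $\sigma_{H,B,t}$-stable for all sufficiently large $t$.

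For the stability claim when $t > t_0$, I would argue by contradiction. If $E$ failed to be stable at some $t_1 > t_0$, there would be a subobject $F$ in $\Bc_{H,B}$ with $\phi_{t_1}(F) \geq \phi_{t_1}(E)$, hence $g_F(t_1) \geq 0$; since $g_F(t) < 0$ for $t \gg 0$ and $g_F$ is affine, $g_F$ would vanish at some $t \geq t_1 > t_0$, producing a mini-wall strictly beyond $t_0$ and contradicting the choice of $t_0$ as outermost. A subobject $F$ with $\Im Z(F) = 0$ has phase $1$, which exceeds $\phi_t(E)$ for every $t$ because $\Im Z(E) > 0$ (valid since $E \in \Bc_{H,B}$), and would destabilize $E$ even at large $t$, contradicting the previous paragraph; hence only subobjects with $\Im Z(F) > 0$ need be considered.

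The main step is instability for $0 < t < t_0$, and this is where I expect the only genuine care to be required. Because $t_0$ is an actual mini-wall and $E$ is stable for $t$ just above $t_0$, closedness of semistability forces $E$ to be semistable at $t_0$, and being on a wall it is strictly semistable there; I would pick a Jordan--H\"older subobject $F_0 \hookrightarrow E$ with $\phi_{t_0}(F_0) = \phi_{t_0}(E)$. Since $\Im Z(E) > 0$ we have $\phi_{t_0}(E) < 1$, whence $\Im Z(F_0) > 0$, so $g_{F_0}$ is a genuine affine function with $g_{F_0}(t_0) = 0$. The inclusion $F_0 \hookrightarrow E$ is a morphism in the fixed heart $\Bc_{H,B}$ and therefore persists for every value of $t$. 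Stability for $t > t_0$ gives $g_{F_0}(t) < 0$ on that side, so $g_{F_0}$ has negative slope, and hence $g_{F_0}(t) > 0$ for all $t < t_0$; that is, $F_0$ destabilizes $E$ at every $\sigma_{H,B,t}$ with $0 < t < t_0$.

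The points I would be most careful about are threefold: confirming that the destabilizer at the wall has strictly positive imaginary part (so that the slope comparison, and thus the affine function $g_{F_0}$, is the right invariant to track); checking that $g_{F_0}$ is not identically zero, since otherwise $E$ would be strictly semistable for all large $t$, contradicting its Gieseker stability; and fixing the convention that ``outermost mini-wall'' means the largest $t$-value at which a wall occurs. Each of these is a short verification rather than a substantial obstacle, and once they are in place the linearity of $g_{F_0}$ converts the single wall $t_0$ into a sharp threshold, giving stability above and instability below in one stroke.
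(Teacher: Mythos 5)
Your proof is correct and is essentially the paper's own argument: both exploit that along the ray the heart $\Bc_{H,B}$ is fixed and the slope $-\Re Z_{H,B,t}/\Im Z_{H,B,t}$ of any fixed object is affine in $t$, both take an equal-phase subobject of $E$ at the outermost wall $t_0$, and both propagate signs linearly to conclude stability above $t_0$ and instability below it. The only difference is in how the degenerate case (the wall function of the subobject vanishing identically, i.e.\ equal slopes $\mu_{H,B}(A)=\mu_{H,B}(E)$) is excluded: the paper rules it out directly from the twisted-Gieseker hypothesis via \cite[Lemma 4.6]{LM2}, whereas you first establish stability on $(t_0,\infty)$ by the crossing/contradiction argument and then use that strict stability to force the negative slope of $g_{F_0}$ --- the same large-volume input deployed in a slightly different order, with no effect on correctness.
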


Even though the central charge of $\sigma_{H, B, t}$ is not of the standard form $Z_{\omega, B}$ as in \eqref{Equ:CenCharNoTodd}, $\sigma_{H, B, t}$ is still a Bridgeland stability condition for all $t>0$  \cite[Lemma 3.3]{LM2}.

\begin{proof}
Consider any $\Coh^{H,B}$-short exact sequence of the form
\begin{equation*}
0 \to A \to E \to Q \to 0.
\end{equation*}
Then $A$ is a sheaf sitting at degree $0$, and we have the long exact sequence of sheaves
\[
0 \to H^{-1}(Q) \to A \overset{\beta}{\to} E \to H^0(Q) \to 0.
\]
The rank of $A$ must be positive, for otherwise $H^{-1}(Q)$ would be zero, and $A$ would be a torsion subsheaf of $E$, contradicting the assumption that $E$ is torsion-free.

For any object $M \in D^b(X)$ such that $\ch_0(M), H\ch_1^B(M)\neq 0$, write
\[
  g_M(t) := -\frac{\Re Z_{H,B,t}(M)}{\Im Z_{H,B,t}(M)}=\frac{\ch_2^B(M)-t\ch_0^B(M)}{H\ch_1^B(M)}.
\]
Now choose $A$ to be a destabilising subobject of $E$ with respect to $\sigma_{H,B,t_0}$ for some $t_0>0$, so that $g_A(t_0)=g_E(t_0)$.  The twisted Gieseker stability assumption on $E$  ensures that  $E$ is $\sigma_{H,B,t}$-semistable for $t \gg 0$ \cite[Remark 4.3]{LM2}, which  means $g_A(t) \leq g_E(t)$ for $t \gg 0$, which then implies
\[
  -\frac{1}{\mu_{H,B}(A)} \leq -\frac{1}{\mu_{B,A}(E)}.
\]
Since $\mu_{H,B}(A), \mu_{H,B}(E)$ are both positive, this corresponds to $\mu_{H,B}(A) \leq \mu_{H,B}(E)$.  If we are in the case $\mu_{H,B}(A) = \mu_{H,B}(E)$, then since $g_A(t_0)=g_E(t_0)$, $E$ is a strictly $B$-twisted $H$-Gieseker semistable sheaf by \cite[Lemma 4.6]{LM2}, contradicting the assumption that $E$ is $B$-twisted $H$-Gieseker stable.  Hence we must have
\begin{equation}\label{eq:AG51-168-1}
\mu_{H,B}(A) < \mu_{H,B}(E).
\end{equation}

On the other hand, \eqref{eq:AG51-168-1} means
  \[
   \frac{d}{dt}g_A(t) < \frac{d}{dt} g_E(t) <0
  \]
  Since $g_A(t), g_E(t)$ are both linear functions in $t$, it follows that $g_A(t)> g_E(t)$ for all $t \in (0,t_0)$, i.e.\ $E$ is unstable   with respect to $\sigma_{H,B,t}$ for all $t \in (0,t_0)$, with $A$  destabilising $E$ in $\Coh^{H,B}$.  (Here, we are using the fact that $A$ is a subobject of $E$ in the heart $\Coh^{H,B}$ for all values of $t>0$, since the construction of $\Coh^{H,B}$ is independent of $t$.)
  
  Since $t=t_0$ is assumed to be the outer-most mini-wall, the above argument also implies that $E$ cannot be strictly semistable with respect to $\sigma_{H,B,t}$ for any $t>t_0$, i.e.\ $E$ must be stable with respect to $\sigma_{H,B,t}$ for all $t>t_0$.
\end{proof}

\begin{lem}\label{lem:Yun-goingup}
Let $p : X \to Y$ be a Weierstra{\ss} elliptic surface, and let $L$ be a line bundle on $X$ such that $c_1(L) \in \langle \Theta , f\rangle_{\mathbb{R}}$.  Whenever $L$ lies in $\Coh^{\omega, B}$ and is $\sigma_{\omega,B}$-stable  for some $\mathbb{R}$-divisors $\omega, B \in \langle \Theta , f\rangle_{\mathbb{R}}$ where $\omega$ is ample, it follows that $L$ is also $\sigma_{\omega^\ast, B}$-stable for any ample class $\omega^\ast \in \langle \Theta, f \rangle_{\mathbb{R}}$ such that
\[
D_{\omega^\ast}=D_\omega \text{\quad and \quad} V_{\omega^\ast}>V_{\omega}.
\]
\end{lem}


\begin{proof}
Let us write  $\omega = R_\omega (\Theta + (D_\omega + e)f)$,  $H = \Theta + (D_\omega + e)f$, and consider the element  $\wt{g}=(T,f) \in \wt{\mathrm{GL}}^+\! (2,\mathbb{R})$ where $T=\begin{pmatrix} 1 & 0 \\ 0 & R_\omega \end{pmatrix}$  and $f$ fixes all values in $\tfrac{1}{2}\mathbb{Z}$.  Then it is easy to check that $\sigma_{\omega, B}=\sigma_{H,B,V_\omega}\cdot \wt{g}$ (where we use the notation in  Lemma \ref{lem:uniqueminiwall}).

Since $L$ is $\mu_{\omega, B}$-stable and hence $\mu_{H,B}$-stable, it is $B$-twisted $H$-Gieseker stable.  Our assumption that $L$ is $\sigma_{\omega,B}$-stable then implies  $L$ is $\sigma_{H,B,V_\omega}$-stable.  Then by Lemma \ref{lem:uniqueminiwall}, $L$ must be $\sigma_{H,B,t}$-stable for all $t>V_\omega$.  Up to a  $\wt{\mathrm{GL}}^+\!(2,\mathbb{R})$-action, it follows that  that $L$ is $\sigma_{\omega^\ast,B}$-stable for any ample class $\omega^\ast \in \langle \Theta, f \rangle_{\mathbb{R}}$ such that $V_{\omega^\ast}>V_\omega$ and $D_{\omega^\ast} = D_\omega$.
\end{proof}

Informally, Lemma \ref{lem:Yun-goingup} says the following: if a line bundle $L$ is stable with respect to a point $(D_{\omega_0}, V_{\omega_0})$ on the $(D_\omega, V_\omega)$-plane, then we can always ``move up'' from the point $(D_{\omega_0}, V_{\omega_0})$ - meaning keeping the $D_\omega$-coordinate fixed while increasing the $V_\omega$-coordinate - and maintain the stability of $L$.  The following lemma says, that via the Fourier-Mukai transform $\Phi$ which essentially interchanges the $D_\omega$= and $V_\omega$=coordinates, we can also ``move to the right'' on the $(D_\omega, V_\omega)$-plane and preserve the stability of $L$.

\begin{lem}\label{lem:Yun-goingup-transf}
Let $p : X \to Y$ be a Weierstra{\ss} elliptic surface.  Suppose $\alpha = \Theta + (D_\alpha +e)f$ for some integer  $D_\alpha \leq -\tfrac{e}{2}$, and $L$ is a line bundle with $c_1(L)=\alpha$.   Whenever $L$ lies in $\Coh^{\omega,0}$ and is $\sigma_\omega$-stable for some ample class $\omega \in\langle \Theta, f \rangle_{\mathbb{R}}$, it follows that $L$ is $\sigma_{\omega^\ast}$-stable for any ample class $\omega^\ast$ such that 
\[
D_{\omega^\ast}>D_\omega \text{\quad and \quad} V_{\omega^\ast}=V_{\omega}.
\]
\end{lem}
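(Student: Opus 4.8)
The plan is to convert the horizontal movement in the $(D_\omega, V_\omega)$-plane into a vertical movement by conjugating with the relative Fourier--Mukai transform $\Phi$, and then to invoke the vertical going-up lemma (Lemma \ref{lem:Yun-goingup}). Since $\Phi$ interchanges the $D_\omega$- and $V_\omega$-coordinates through the relations \eqref{eq:AG50-118-1}, namely $V_\omega = D_{\omega'}$ and $D_\omega = V_{\omega'}$, increasing $D_\omega$ while fixing $V_\omega$ in the original picture corresponds exactly to increasing $V_{\omega'}$ while fixing $D_{\omega'}$ in the transformed picture, which is precisely the situation Lemma \ref{lem:Yun-goingup} governs.

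Concretely, first I would note that $f\alpha = 1$, so by Lemma \ref{lem:lbfdonetr} the bundle $L$ is $\Phi$-WIT$_0$ and, as in \ref{para:spcase1}, both $\wh{L} = \Phi L$ and $\wt{L} = \wh{L}\otimes \OO_X(\tfrac{e}{2}f)$ are line bundles with $c_1(\wt{L}) = \wt{\alpha} = -\Theta + (D_\alpha + \tfrac{e}{2})f$. Applying the equivalence \eqref{eq:Lstabequiv} to the $\Phi$-pair $(\omega, \omega')$, where $D_{\omega'} = V_\omega$ and $V_{\omega'} = D_\omega$, the $\sigma_\omega$-stability of $L$ yields the $\sigma_{\omega'}$-stability of $\wt{L}[1]$. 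To feed this into Lemma \ref{lem:Yun-goingup}, which is phrased for genuine line bundles sitting in degree $0$, I would pass to the dual: since $\omega'(-\wt{\alpha}) = R_{\omega'}(V_\omega - D_\alpha - \tfrac{e}{2}) > 0$ — here the hypothesis $D_\alpha \leq -\tfrac{e}{2}$ is what guarantees $D_\alpha + \tfrac{e}{2} \leq 0 < V_\omega$ — the dual $(\wt{L})^\vee$ is a line bundle lying in $\Coh^{\omega',0}$ with $\omega'\ch_1((\wt{L})^\vee) > 0$, so Lemma \ref{lem:ArcMil-1} identifies the $\sigma_{\omega'}$-stability of $(\wt{L})^\vee$ with that of $((\wt{L})^\vee)^\vee[1] = \wt{L}[1]$.

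Now I would apply the vertical going-up lemma to $(\wt{L})^\vee$. Given the target ample class $\omega^\ast$ with $D_{\omega^\ast} > D_\omega$ and $V_{\omega^\ast} = V_\omega$, let $(\omega')^\ast$ be its $\Phi$-partner, so that $D_{(\omega')^\ast} = V_{\omega^\ast} = V_\omega = D_{\omega'}$ and $V_{(\omega')^\ast} = D_{\omega^\ast} > D_\omega = V_{\omega'}$; note $D_{(\omega')^\ast} = V_\omega > 0$ forces $(\omega')^\ast$ to be ample by Lemma \ref{lem:AG50-132-1}. Thus $(\omega')^\ast$ lies strictly above $\omega'$ with the same $D$-coordinate, and Lemma \ref{lem:Yun-goingup} yields the $\sigma_{(\omega')^\ast}$-stability of $(\wt{L})^\vee$. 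Reversing the chain then finishes the argument: Lemma \ref{lem:ArcMil-1} gives the $\sigma_{(\omega')^\ast}$-stability of $\wt{L}[1]$, and \eqref{eq:Lstabequiv} applied to the $\Phi$-pair $(\omega^\ast, (\omega')^\ast)$ gives the $\sigma_{\omega^\ast}$-stability of $L$.

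The step I expect to be the main obstacle is justifying that the Fourier--Mukai equivalence \eqref{eq:Lstabequiv} is available at both endpoints, i.e.\ that Theorem \ref{thm:autoequiv1} applies at $\omega$ and at $\omega^\ast$. This requires $\whPhi \OO_x$ to be $\sigma_{\omega}$- and $\sigma_{\omega^\ast}$-stable for every closed point $x$, which by Corollary \ref{lem:1dimshoutmowall} holds once the inequality \eqref{eq:ineq0}, namely $4V_\omega(D_\omega + \tfrac{e}{2}) > 1$, is satisfied. The saving feature is monotonicity: since $V_{\omega^\ast} = V_\omega$ and $D_{\omega^\ast} > D_\omega$, the left-hand side of \eqref{eq:ineq0} only increases as we pass from $\omega$ to $\omega^\ast$, so it is enough to have the inequality at the starting class $\omega$; I would either carry this as a standing hypothesis or extract it from whatever data guaranteed the $\sigma_\omega$-stability of $L$ in the first place. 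A secondary point to verify is that every intermediate class produced by $\Phi$ is genuinely ample, so that all the stability conditions in play are honest Bridgeland stability conditions, which again follows from the ampleness of $\omega^\ast$ together with the coordinate description of $\Phi$.
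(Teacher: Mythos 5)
Your proposal is correct and follows essentially the same route as the paper's own proof: transform via \eqref{eq:Lstabequiv} to get $\wt{L}$ stable at $\sigma_{\omega'}$ with $(D_{\omega'},V_{\omega'})=(V_\omega,D_\omega)$, dualize with Lemma \ref{lem:ArcMil-1} (using $D_\alpha+\tfrac{e}{2}\leq 0$ and $V_\omega>0$ to place $(\wt{L})^\vee$ in $\Coh^{\omega'}$), move up vertically with Lemma \ref{lem:Yun-goingup}, then dualize and transform back. Your closing remark about verifying the hypothesis \eqref{eq:ineq0} of Theorem \ref{thm:autoequiv1} at both endpoints, and its monotonicity under increasing $D_\omega$ at fixed $V_\omega$, is a point the paper's proof leaves implicit rather than a different argument.
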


\begin{proof}
Suppose $\omega$ is an ample class on $X$ such that $L$ is $\sigma_\omega$-stable.  By \eqref{eq:Lstabequiv}, we have that $\wt{L}[1]$ is $\sigma_{\omega'}$-stable where $\wt{L} = (\Phi L) \otimes \OO_X(\tfrac{e}{2}f)$ and $(D_{\omega'}, V_{\omega'})=(V_\omega, D_\omega)$.  Let $\wt{\alpha} = c_1(\wt{L})$.  Note that
\[
 \omega' \wt{\alpha} = R_{\omega'}(D_\alpha + \tfrac{e}{2} - V_\omega).
\]
Under our assumption $D_\alpha \leq -\tfrac{e}{2} \leq 0$, and since $V_\omega >0$, we  have
\[
\omega' c_1((\wt{L})^\vee) = -\omega'\wt{\alpha} >0
\]
meaning $(\wt{L})^\vee$ lies in the category $\Coh^{\omega',0}$; we also know $(\wt{L})^\vee$ is $\sigma_{\omega'}$-stable by Lemma \ref{lem:ArcMil-1}.  Now by Lemma \ref{lem:Yun-goingup}, it follows that $(\wt{L})^\vee$ is also $\sigma_{\omega''}$-stable for any ample class $\omega''$ such that $D_{\omega''} = D_{\omega'}$ and $V_{\omega''} > V_{\omega'}$.  For any such $\omega''$, we have $\omega'' c_1( (\wt{L})^\vee)>0$ and
\begin{align*}
  (\wt{L})^\vee \text{ is $\sigma_{\omega''}$-stable } &\Leftrightarrow \wt{L}[1] \text{ is $\sigma_{\omega''}$-stable} \text{\quad by Lemma \ref{lem:ArcMil-1}} \\
  &\Leftrightarrow L \text{ is $\sigma_{\omega^\ast}$-stable} \text{\quad by \eqref{eq:Lstabequiv}}
\end{align*}
where $\omega^\ast$ is an ample class with
\begin{align*}
  D_{\omega^\ast} &= V_{\omega''} > V_{\omega'} = D_\omega, \\
  V_{\omega^\ast} &= D_{\omega''} = D_{\omega'} = V_\omega.
\end{align*}
\end{proof}

Combining Lemmas  \ref{lem:Yun-goingup} and \ref{lem:Yun-goingup-transf}, we obtain a result that says that whenever a line bundle $L$ is $\sigma_\omega$-stable for some ample class $\omega$, it must also be $\sigma_{\omega^\ast}$-stable for any $\omega^\ast$ that is farther away from the origin than $\omega$ on the $(D_\omega, V_\omega)$-plane with respect to the taxicab metric.

\begin{cor}\label{cor:goingawaylemma}
Let $p : X \to Y$ be a Weierstra{\ss} elliptic surface.  Suppose $\alpha = \Theta + (D_\alpha +e)f$ for some integer  $D_\alpha \leq -\tfrac{e}{2}$, and $L$ is a line bundle with $c_1(L)=\alpha$.   Suppose $L$ lies in $\Coh^{\omega,0}$ and is $\sigma_\omega$-stable for some ample class $\omega \in\langle \Theta, f \rangle_{\mathbb{R}}$.  Then  $L$ is also $\sigma_{\omega^\ast}$-stable for any ample class $\omega^\ast$ such that 
\[
D_{\omega^\ast} \geq D_\omega \text{\quad and \quad} V_{\omega^\ast} \geq V_{\omega}.
\]  
\end{cor}

\begin{thm}\label{thm:Dalphaneg-main1}
Let $p : X \to Y$ be a Weierstra{\ss} elliptic surface.  Suppose $\alpha = \Theta + (D_\alpha + e)f$ where $D_\alpha < -e$ is an integer.  Then for any ample class $\omega \in \langle \Theta, f \rangle_{\mathbb{R}}$ and any line bundle $L$ on $X$ with $c_1(L)=\alpha$ satisfying $\omega \alpha > 0$, the line bundle $L$ is $\sigma_\omega$-stable.  In particular, we have
\[
\text{ $L$ is twisted ample with respect to $\omega$ } \Rightarrow \text{ $L$ is $\sigma_\omega$-stable}.
\]
\end{thm}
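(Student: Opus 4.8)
The plan is to prove stability at one carefully chosen foothold via Theorem \ref{thm:main1}, and then to sweep out the entire region $\omega\alpha>0$ using the two ``going up'' results, Lemma \ref{lem:Yun-goingup} and Lemma \ref{lem:Yun-goingup-transf}. First I would pass to RDV coordinates. Writing $D_c:=-D_\alpha-e$, the hypothesis $D_\alpha<-e$ forces $D_c>0$, and since $\omega\alpha=R_\omega(D_\omega-D_c)$, the locus $\omega\alpha>0$ is exactly $\mathcal{R}=\{(D_\omega,V_\omega):D_\omega>D_c,\ V_\omega>0\}$ (any such pair with $R_\omega>0$ gives an ample $\omega$, since $D_\omega>D_c>0$ already yields ampleness via Lemma \ref{lem:AG50-132-1}). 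The goal is then to show that the class $\omega_*$ with coordinates $(D_*,V_*)$ makes $L$ $\sigma_{\omega_*}$-stable, for \emph{every} $(D_*,V_*)\in\mathcal{R}$. The key strategic point is that the available moves only ever \emph{increase} $D_\omega$ and $V_\omega$, so the foothold must be placed near the lower-left corner $(D_c,0)$; a foothold at large $V_\omega$ would be useless for reaching small $V_\omega$.

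Next I would produce such a corner foothold. In RDV coordinates the three hypotheses of Theorem \ref{thm:main1} become: \eqref{eq:ineq1RVD}, which holds automatically because its right-hand side $D_\alpha+\tfrac{e}{2}<0$; the wall estimate \eqref{eq:ineq2RVD}, i.e.\ $4V_\omega(D_\omega+\tfrac{e}{2})\ge (D_\omega-D_c)^2$; and the vanishing of the algebraic obstruction \eqref{eq:ineq3RVD-v2}, which (using $D_\alpha(D_\alpha+e)=D_c(D_c+e)$) reads $D_\omega(V_\omega-\tfrac{e}{2})+D_c(D_c+e)>0$. Given a target $(D_*,V_*)\in\mathcal{R}$, I would set $V_0:=V_*/2$ and let $D_0\to D_c^{+}$: the right-hand side of \eqref{eq:ineq2RVD} tends to $0$ while its left-hand side tends to $2V_*(D_c+\tfrac{e}{2})>0$, and the left-hand side of \eqref{eq:ineq3RVD-v2} tends to $D_c(V_0+\tfrac{e}{2}+D_c)>0$. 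Hence for $D_0\in(D_c,D_*)$ close enough to $D_c$, both inequalities hold, so Theorem \ref{thm:main1} gives $\sigma_{\omega_0}$-stability of $L$ at a foothold $(D_0,V_0)$ with $D_c<D_0<D_*$ and $0<V_0<V_*$.

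Then I would propagate. Throughout $\mathcal{R}$ we have $\omega\alpha>0$, so $L\in\Coh^{\omega,0}$ everywhere and the going-up lemmas apply; moreover $D_\alpha<-e\le-\tfrac{e}{2}$, which is precisely the hypothesis of Lemma \ref{lem:Yun-goingup-transf}. From the foothold $(D_0,V_0)$, Lemma \ref{lem:Yun-goingup} (fix $D_\omega$, raise $V_\omega$) carries stability up to $(D_0,V_*)$ because $V_*>V_0$, and then Lemma \ref{lem:Yun-goingup-transf} (fix $V_\omega$, raise $D_\omega$) carries it right to $(D_*,V_*)$ because $D_*>D_0$; every intermediate class stays in $\mathcal{R}$, so the hypotheses remain valid at each stage. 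This gives $\sigma_{\omega_*}$-stability at the arbitrary target, proving the main assertion. The ``in particular'' clause is then immediate, since the vanishing of the algebraic obstruction already forces $\omega\alpha>0$.

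The step I expect to be the main obstacle is guaranteeing that the sweep genuinely reaches the part of $\mathcal{R}$ with $V_\omega$ small and $D_\omega$ large, i.e.\ the strip adjacent to the $D_\omega$-axis where the algebraic obstruction fails. Since upward moves cannot descend into it, everything rests on the horizontal move, Lemma \ref{lem:Yun-goingup-transf}, remaining valid at arbitrarily small $V_\omega$: its proof routes through the Fourier--Mukai equivalence \eqref{eq:Lstabequiv}, whose applicability (geometricity of $\Phi\cdot\sigma_{\omega,0}$, governed by \eqref{eq:ineq0}) is most delicate exactly as $V_\omega\to 0^{+}$. Verifying that the foothold can be chosen with $V_0$ as small as needed, and that rightward transport survives all the way down toward the axis, is the crux; once that is in hand, a single corner foothold plus one upward and one rightward move settles every point of $\mathcal{R}$.
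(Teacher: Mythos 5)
Your proposal is correct and takes essentially the same route as the paper: both establish a foothold for Theorem \ref{thm:main1} near the corner point $P=(-(D_\alpha+e),0)$, where \eqref{eq:ineq2RVD} and the algebraic obstruction \eqref{eq:ineq3RVD-v2} hold simultaneously (the paper via tangency of the boundary curve $\omega^2=\omega\alpha$ to the $D_\omega$-axis at $P$ plus a continuity neighbourhood, you via the explicit limit $D_0\to D_c^{+}$ with $V_0=V_*/2$), and then sweep out the whole region $\omega\alpha>0$ using Lemmas \ref{lem:Yun-goingup} and \ref{lem:Yun-goingup-transf}. Your closing caveat about \eqref{eq:ineq0} near the $D_\omega$-axis is a fair observation, but your argument is no less rigorous there than the paper's own proof, which likewise invokes Lemma \ref{lem:Yun-goingup-transf} without rechecking geometricity.
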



\begin{proof}
Our strategy is as follows: 
\begin{enumerate}
  \item[(i)] We show that $\omega \alpha >0$ defines a half-plane in the $(D_\omega, V_\omega)$-plane, and is itself contained in the half-plane $D_\omega >0$.  Let $P$ denote the intersection of the $D_\omega$-axis with  the straight line $\omega \alpha = 0$.
  \item[(ii)] We show that the intersection of the regions given by $\omega \alpha >0$ and  $\omega^2 - \frac{(\omega \alpha)^2}{\omega^2} >0$ is contained in the half-plane $V_\omega >0$, and its boundary is tangent to the $D_\omega$-axis at the point $P$.
  \item[(iii)] We show that $P$ has an open neighhourbood $U$ contained within the region given by the twisted ampleness criterion, i.e.\ the region given by \eqref{eq:ineq3RVD}.
\end{enumerate}
Once we establish (i) through (iii), given any ample divisor $\omega^\ast$ satisfying $\omega^\ast \alpha >0$, we will be able to find a point $(D'_\omega, V'_\omega)$ such that
\begin{itemize}
\item[(a)] $D'_\omega < D_{\omega^\ast}$ and $V'_\omega < V_{\omega^\ast}$.
\item[(b)] If $\omega'$ denotes the  ample divisor satisfying $(D_{\omega'}, V_{\omega'})=(D'_\omega, V'_\omega)$, then $\omega'$ satisfies $\omega'\alpha>0$, $(\omega')^2 - \frac{(\omega' \alpha)^2}{(\omega')^2} >0$, and the twisted ampleness criterion for $\omega', \alpha$ holds.
\end{itemize}
By Theorem \ref{thm:main1}, condition (b) ensures that $L$ is $\sigma_{\omega'}$-stable.  Then by Corollary \ref{cor:goingawaylemma}, condition (a) ensures that $L$ is also $\sigma_{\omega^\ast}$-stable, thus proving the theorem.

(i) Recall that
\[
\omega \alpha = R_\omega (D_\omega + D_\alpha +e)
\]
and so, under our assumption $D_\alpha + e < 0$, the inequality $\omega \alpha > 0$ is equivalent to
\[
  D_\omega > -(D_\alpha + e)
\]
and the claim follows.  The straight line $\omega \alpha = 0$ intersects the $D_\omega$-axis at the point $P : (D_\omega, V_\omega) = (-(D_\alpha + e), 0)$.

(ii) We can rewrite $\omega^2 - \frac{(\omega \alpha)^2}{\omega^2} \geq 0$ as $(\omega^2)^2 \geq (\omega \alpha)^2$ or equivalently $\omega^2 \geq \omega \alpha$ since $\omega$ is ample and $\omega \alpha>0$ by assumption.  The boundary of this region is given by $\omega^2 = \omega \alpha$, i.e.\
\[
2\sqrt{V_\omega} = \frac{(D_\omega + D_\alpha + e)}{\sqrt{D_\omega + \tfrac{e}{2}}}.
\]
Restricting to the half plane $\omega \alpha \geq 0$, i.e.\ $D_\omega + D_\alpha + e\geq 0$, we see that the above boundary equation can further be rewritten as
\begin{equation}\label{eq:regioncri1boundary}
  4V_\omega = \frac{ (D_\omega + D_\alpha + e)^2}{D_\omega + \tfrac{e}{2}}
\end{equation}

Regarding $V_\omega$ as a function in $D_\omega$ using \eqref{eq:regioncri1boundary}, we obtain
\[
\frac{d}{d D_\omega} V_\omega = \frac{1}{4}\frac{(D_\omega + D_\alpha + e)}{(D_\omega + \frac{e}{2})^2}(D_\omega - D_\alpha),
\]
which equals zero exactly when $D_\omega + D_\alpha + e=0$, i.e.\ $D_\omega = -(D_\alpha + e)$, and is positive (resp.\ negative) when $D_\omega > -(D_\alpha+e)$ (resp.\ $D_\omega < -(D_\alpha + e)$) in the region $D_\omega>0$.  This shows that within the region $\omega \alpha \geq 0$ and $V_\omega \geq 0$, the curve $\omega^2=\omega\alpha$ is tangent to the $D_\omega$-axis at the point $P$.


(iii) Consider the region on the $(D_\omega, V_\omega)$-plane given by the twisted ampleness criterion, i.e.\ \eqref{eq:ineq3RVD-v2}, which has boundary given by
\begin{equation}\label{eq:algobsboundary}
g(D_\omega, V_\omega) := D_\omega (V_\omega - \tfrac{e}{2}) + D_\alpha (D_\alpha + e)=0.
\end{equation}
Since $g$ is a continuous function $\mathbb{R}^2 \to \mathbb{R}$, and
\[
g(-(D_\alpha + e), 0) = (D_\alpha + e)(D_\alpha + \tfrac{e}{2}) > 0
\]
under our assumption that $D_\alpha < -e$, it follows that the point $P$ has an open neighbourhood $U$  that is contained entirely within the region \eqref{eq:ineq3RVD-v2} given by the twisted ampleness criterion.
\end{proof}

\section{Definition of weak stability condition} \label{sec:defwsc}
In Section \ref{sec:Dalphageqzero}, we will answer Question \ref{que:main-1} for the case $D_\alpha\geq 0$. Our strategy is to first study the stability of line bundles with respect to  ``degenerations'' of Bridgeland stability conditions, i.e.\ when specific parameters of Bridgeland stability conditions approach certain limits. For this purpose, we use a generalized notion of weak stability conditions defined in the predecessor to this paper \cite{CLSY1}. In this section, we recall the definition and examples of weak stability conditions that will be used in Section \ref{sec:Dalphageqzero}. 


\begin{defn} (Definition 3.2, \cite{CLSY1})
		\label{Def:Weak}
		A weak stability condition on $\mathcal{D}$ is a triple $$\sigma=(Z, \A, \{\phi(K)\}_{K\in \ker(Z)\cap \A}),$$ where $\mathcal{A}$ is the heart of a bounded t-structure on $\mathcal{D}$, and $Z: K(\mathcal{D})\rightarrow \mathbb{C}$ a group homomorphism satisfying:
		
		(i) For any $E\in \mathcal{A}$, we have $Z(E)\in\mathbb{H}\cup\mathbb{R}_{\leq 0}$. For any $K\in \ker(Z)\cap\A$, we have $0<\phi(K)\leq 1$. 
  
		(ii) (Weak see-saw property) For any short exact sequence 
		\begin{equation*}
			0\rightarrow K_1\rightarrow K\rightarrow K_2\rightarrow 0
		\end{equation*}
		in $\ker(Z)\cap\A$, we have $\phi(K_1)\geq \phi(K)\geq \phi(K_2)$ or $\phi(K_1)\leq \phi(K)\leq \phi(K_2)$.

		 For any object $E\notin \ker(Z)$, define the phase of an object $E\in\A$ by $$\phi(E)=(1/\pi) \mathrm{arg}\, Z(E)\in (0, 1].$$ We further require 
   
   (iii) The phase function satisfies the Harder-Narasimhan (HN) property. 
	\end{defn}

\subparagraph	\label{para:ssdefforweakstabcon}   We define the slope of an object with respect to a weak stability condition by 
$\rho(E)=-\text{cot}(\pi\phi(E))$
Following \cite{piyaratne2015moduli}, an object $E\in \A$ is (semi)-stable if for any nonzero subobject $F\subset E$ in $\A$, we have 
\[
\rho(F)<(\leq)\rho(E/F),
\]
or equivalently
\[
\phi(F)<(\leq)\phi(E/F).
\]

The notions of slicing can be defined in an analogous way as Bridgeland stability conditions, for more details we refer to \cite{CLSY1}. Then we have the following result:
\begin{prop} \cite[Proposition 3.5]{CLSY1}
A weak stability condition can  equivalently be defined by 
$$\sigma=(Z, \cP, \{\phi(E)\}_{E\in \ker(Z)\cap\cP((0, 1])}),$$
where $\cP$ is a slicing with properties as in Definition 3.3 in \cite{StabTC}, and the central charge $Z: K(\mathcal{D})\rightarrow \mathbb{C}$ is a group homomorphism satisfying:

(i) For $E\in \cP(\phi)$,

\begin{itemize}
    \item if $E\notin \cP(\phi)\cap \ker(Z)$, then $Z(E)=m(E)e^{i\pi\phi}$ where $m(E)\in \mathbb{R}_{> 0}$;
    \item if $E\in \cP(\phi)\cap \ker(Z)$, then $\phi(E)=\phi$.
\end{itemize}

(ii) (weak see-saw property) For any short exact sequence 
		\begin{equation*}
			0\rightarrow K_1\rightarrow K\rightarrow K_2\rightarrow 0
		\end{equation*}
		in $\ker(Z)\cap\cP((0, 1])$, we have $\phi(K_1)\geq \phi(K)\geq \phi(K_2)$ or $\phi(K_1)\leq \phi(K)\leq \phi(K_2)$.
\end{prop}

 \begin{rem}
 \label{Rem:stableness}
 Above, we  defined what it means for  an object  in the abelian category $\A=\cP((0, 1])$ to be  $\sigma$-stable. For a general $t \in \mathbb{R}$, we say an object is stable in $\cP(t)$ if it is a shift of a stable object in $\cP(t')$ for some $0<t'\leq 1$.	
 \end{rem}
 
 Recall in the original definition of weak stability in \cite{piyaratne2015moduli}, that the phases of the objects in the kernel is set to be $1$. In this generalized definition, we allow the phases of the objects in the kernel to be arbitrary. With this flexibility, for the weak stability conditions we will be using, we define the phases of the objects in the kernel to be the limit of their phases at Bridgeland stability conditions. This way, the weak stability conditions behave like limits of Bridgeland stability conditions, and allow us to reduce the problem of stability of line bundles at Bridgeland stability conditions to the problem of  their stability at weak stability conditions. 

\paragraph In Section \ref{sec:Dalphageqzero}, we will be switching among the different weak stability conditions defined in 
\cite{CLSY1}. For convenience, we 
recall here the notation and description of kernel for each of these four types of weak stability conditions.  

\begin{itemize}
    \item (Weak stability conditions at the origin, $\sigma_b^L$)   For any $b \in \mathbb{P}^1$, we have a weak stability condition
\[
 \sigma^L_b=\left(Z_{H}, \B^0_{H, -2}, \{\phi_b(E)\}|_{E\in \ker (Z_{H})\cap\B^0_{H, -2}}\right)
\]
where 
\[
  Z_H = -\ch_2 + i H\ch_1
\]
with $H=\Theta + ef$.  The heart $\Bc^0_{H,-2}$ is obtained by tilting $\Coh(X)$ twice -  we recall the construction here. Let 
\begin{align*}
  \Tc^0_H &= \langle E \in \Coh (X) : E \text{ is $\mu_H$-semistable}, \mu_{H, \mathrm{min}}(E)>0 \rangle \\
  \Fc^0_H &= \langle E \in \Coh (X) : E \text{ is $\mu_H$-semistable}, \mu_{H, \mathrm{max}}(E) \leq 0 \rangle.
\end{align*}
We obtain an abelian category by tilting $\Coh(X)$ at the torsion pair above: 
\[
\A^0_H=\langle \F^0_H[1], \T^0_H\rangle.
\]
This is similar to the standard tilted heart, but with respect to a nef divisor. Set \[
  \Fc^0_{H,-2} = \langle \OO_\Theta (i) : i \leq -2 \rangle
\]
where $\OO_\Theta (i)$ denotes $i_\ast \OO_{\mathbb{P}^1}(i)$, and set
\[
  \Tc^0_{H,-2} = \{ E \in \Ac^0_{H} : \Hom (E, \OO_\Theta (i)) = 0 \text{ for all } i \leq -2 \}.
\]
Performing a second tilt at this torsion pair we obtain the heart
\[
  \Bc^0_{H,-2} = \langle \Fc^0_{H,-2}[1], \Tc^0_{H,-2} \rangle.
\]

Here, 
\[
\kernel (Z_H) \cap \Bc^0_{H,-2} = \langle \OO_X [1], \OO_\Theta (-1)\rangle, 
\]
with phases $\phi_b^L(\cO_X[1])=1$, and $\phi_b^L(\cO_\Theta(-1))=\frac{1}{2}$. See \cite[Section 5.1]{CLSY1} for details. 
\item (Weak stability conditions on $V$-axis, $\sigma_{V,H}$)  For any $V>0$, we have a weak stability condition
\[
\sigma_{V, H}=(Z_{V, H}, \B^0_{H, -2}, \{\phi_{V, H}(K)\}_{K\in\ker(Z_{V, H})\cap\B^0_{H, -2}})
\]
where 
\[
Z_{V,H} = -\ch_2 + V\ch_0 + iH\ch_1
\]
with $H$ and $\B^0_{H, -2}$ as above, and 
\[
\kernel (Z_{V,H})\cap \Bc^0_{H,-2}=\langle \OO_\Theta (-1)\rangle.  
\]
We have $\phi_{V,H}(\OO_\Theta (-1))=\frac{1}{2}$.  See  \cite[Section 5.13]{CLSY1} for details. 
\item (Weak stability conditions on $D$-axis, $\sigma_D$)  For any rational number $D>0$, we have a weak stability condition
\[
\sigma_D=(Z_{D}, \Coh^{\omega, 0}, \{ \phi_D(K) \}_{K \in \kernel (Z_D) \cap \Coh^{\omega,0}} )
\]
where
\[
Z_D = -\ch_2 + i \omega \ch_1
\]
with $\omega = \Theta + (D+e)f$.  Here, $\Coh^{\omega, 0}$ is the standard tilted heart as in Bridgeland stability conditions, and 
\[
\kernel (Z_D) \cap \Coh^{\omega, 0}=\langle \OO_X[1]\rangle
\]
with $\phi_D(\OO_X [1])=1$.  See 
 \cite[Section 5.16]{CLSY1} for details.

\item (Weak stability conditions after an autoequivalence) For any $a \in \mathbb{P}^1$, we have a weak stability condition 
\[
\sigma^R_a=(Z'_0, \B, \{\phi^R_a(K)\}_{K\in \ker(Z'_0)\cap\B})
\]
where 
\[
Z_0' = Z^{td}_{\omega_0', B_0'} = -\ch_2^{B_0'} + (V_{\omega_0'}-1)\ch_0^{B_0'} + i \omega_0' \ch_1^{B_0'}
\]
with $\omega_0' = \tfrac{1}{2}(\Theta + 3f)$ and $B_0' = \tfrac{1}{2}(\Theta + (-2D_\alpha -3)f)$. The heart $\Bc$ is obtained by tilting $\Coh(X)$ twice. Let $\Coh^{\omega_0', B_0'}$ be the standard tilted heart. Set $L_0 = \OO_X (-(D_\alpha+1)f)$ and $L_1=\OO_X (\Theta -(D_\alpha +2)f)$. Consider the subcategories in $\Coh^{\omega'_0, B'_0}$:
\[
\begin{split}
\T_\A&=\langle L_1[1]\rangle,\\
\F_\A&=\{F\in\Coh^{\omega'_0, B'_0} : \Hom(L_1[1], F)=0\}.
\end{split}
\]
Performing the second tilt at this torsion pair and shifting by $-1$, we obtain the heart 
\[
\B=\langle\F_\A, \T_\A[-1]\rangle.
\]
Here,
\[
\kernel (Z_0') \cap \Bc = \langle L_0[1], L_1\rangle
\]
We have $\phi_a^R(L_0[1])=\frac{3}{4}$ and $\phi_a^R (L_1)=\frac{1}{4}$. Intuitively, these weak stability conditions are the "images" of the weak stability conditions $\sigma_b^L$'s under the relative Fourier Mukai transform.  See 
\cite[Section 5.19]{CLSY1} for details.
\end{itemize}

\section{Weak polynomial stability  under group actions}\label{sec:wpsuga}


 The notion of weak polynomial stability condition, as defined in \cite{Lo20}, encompasses Bridgeland stablity condition, polynomial stability condition in the sense of Bayer \cite{BayerPBSC}, and very weak pre-stability condition in the sense of  Piyaratne and Toda \cite{piyaratne2015moduli}.  In this section, we define a notion of \emph{weak polynomial stability data}.  This notion includes the notion of weak stability conditions defined in the predecessor to this paper \cite{CLSY1}.  We also extend some of  the results on preservation of weak polynomial stability under equivalences of triangulated categories in \cite{Lo20}, so that they can be applied to weak stability conditions in Section \ref{sec:Dalphageqzero}.  Compared to the results in \cite{Lo20}, the results in this section need more care, since  objects in the kernel of the central charge are now assigned phases in a very specific manner.
 

\paragraph[Weak polynomial stability functions] We recall some notions on weak polynomial stability from \cite[Section 4]{Lo20}.  For $K=\mathbb{R}$ or $\mathbb{C}$, let $K(\!(w)\!)^c$ denote the field of  Laurent series  in $w$  over $K$ that are convergent on some punctured disk of positive radius centered at the origin.  Given any $0 \neq f(v) \in \CLoovc$, if we let $v \to \infty$ along the positive real axis, then there is a continuous function germ $\phi : \mathbb{R}_{>0} \to \mathbb{R}$ such that 
\[
 f(v) \in \mathbb{R}_{>0}e^{i \pi \phi (v)} \text{ for $v \gg 0$}.
\]
By abuse of notation, we refer to $\phi (v)$ as a polynomial phase function of $f(v)$.  Given two polynomial phase functions $\phi_2, \phi_2$, we write $\phi_1 \prec \phi_2$  (resp.\ $\phi_1 \preceq \phi_2$) if $\phi_1 (v) < \phi_2 (v)$ (resp.\ $\phi_1(v) \leq \phi_2(v)$) for all $v \gg 0$.  As shown in \cite[Lemma 4.3]{Lo20}, the set of all polynomial phase functions is totally ordered with respect to $\preceq$.

\subparagraph \label{para:defpolyphasefunction} Given a triangulated category $\Dc$ and the heart $\Ac$ of a bounded t-structure on $\Dc$, we say a group homomorphism
\[
Z : K(\Dc) \to \CLoovc
\]
(where we consider $\CLoovc$ as a group under addition) is a \emph{weak polynomial stability function} on $\Ac$ with respect to $(\phi_0, \phi_0 +1]$, for some polynomial phase function $\phi_0$, if for all $0 \neq E \in \Ac$ we have
\[
  Z(E)(v) \in \mathbb{R}_{\geq 0}\cdot e^{i\pi (\phi_0(v), \phi_0(v)+1]} \text{\qquad for $v \gg 0$}.
\]
When $Z$ is a weak polynomial stability function on $\Ac$, we set
\[
\Ac_{\kernel Z} = \{ E \in \Ac : Z(E)=0\}
\]
which is a Serre subcategory of $\Ac$.  


\subparagraph[Phase functions and stability] Given a weak polynomial stability function $Z$ on $\Ac$ with respect to $(\phi_0, \phi_0+1]$, we define the (polynomial) phase function $\phi_Z(E)$ of any $E \in \Ac \setminus \Ac_{\kernel Z}$ via the relation
\[
 Z(E)(v) \in \mathbb{R}_{>0}    e^{i \pi \phi_{Z} (E)(v)} \text{ for $v \gg 0$, such that } \phi_0 \prec  \phi_Z(E) \preceq \phi_0+1.
\]

In order to define what it means for an object in $\Ac$ to be semistable with respect to $Z$, however, we also need to define phase functions of objects in $\Ac_{\kernel Z}$.  
\begin{defn}
Given a polynomial phase function $\phi_0$, a \emph{weak polynomial stability data} on a triangulated category $\Dc$ with respect to $(\phi_0, \phi_0+1]$ is a triple $(Z,\Ac,S)$ where
\begin{itemize}
\item[(i)] $\Ac$ is the heart of a bounded t-structure on $\Dc$.
\item[(ii)] $Z : K(\Dc) \to \CLoovc$ is a weak polynomial stability function on $\Ac$ with respect to $(\phi_0, \phi_0+1]$ for some polynomial phase function $\phi_0$.
\item[(iii)] $S$ is an indexed set of the form $\{ \phi_k \}_{0 \neq k \in \Ac_{\kernel Z}}$ where each $\phi_k$ lies in  $(\phi_0, \phi_0+1]$.  That is, for each nonzero object $k$ in $\Ac_{\kernel Z}$, we specify a phase function in $(\phi_0, \phi_0+1]$.
\end{itemize}
\end{defn}

Given a weak polynomial stability data $(Z, \Ac, S)$ as above, we say a nonzero object $E \in \Ac$ is $Z$-semistable (resp.\ $Z$-stable) if, for every $\Ac$-short exact sequence
\[
  0 \to M \to E \to N \to 0
\]
where $M, N\neq 0$, we have
\[
  \phi_Z (M) \preceq \phi_Z (N) \text{\quad (resp.\ $\phi_Z (M) \prec \phi_Z (N)$)}.
\]
More generally, we say that an object $E \in \Dc$ is $Z$-semistable (resp.\ $Z$-stable) if $E[j]$   is a $Z$-semistable (resp.\ $Z$-stable) object in $\Ac$  for some $j \in \mathbb{Z}$, in which case we define $\phi_Z(E)$ via the relation $\phi_Z (E) + j = \phi_Z (E[j])$.  We say $Z$ satisfies the Harder-Narasimhan (HN) property on $\Ac$ if every nonzero object $E$ in $\Ac$ admits a filtration
\begin{equation}\label{eq:HNfilt}
 0=E_0 \subsetneq E_1 \subsetneq \cdots \subsetneq  E_m = E
\end{equation}
in $\Ac$ for some $m \geq 1$, where each quotient $E_i /E_{i-1}$ ($1 \leq i \leq m$) is a $Z$-semistable object and
\[
  \phi_Z (E_1/E_0) \succ \phi_Z (E_2/E_1) \succ \cdots \succ \phi_Z (E_m/E_{m-1}).
\]

\begin{rem}
Once we have a weak polynomial stability function $Z$ on a heart $\Ac$ with respect to some interval $(\phi_0, \phi_0+1]$, we can define the phases of objects in $\Ac \setminus \Ac_{\kernel Z}$.  In order to define which objects in $\Ac$ (including objects in $\Ac_{\kernel Z}$) are $Z$-semistable, however, it becomes necessary to define phases of nonzero objects in $\Ac_{\kernel Z}$, hence the need for the notion of weak polynomial stability data.
\end{rem}

\paragraph[Relations to other types of stability] Suppose $(Z, \Ac, S)$ is a weak polynomial stability data  with respect to $(\phi_0, \phi_0+1]$.  If $Z$ has the HN property on $\Ac$, then we say $(Z,\Ac)$ is a weak polynomial stability condition with respect to $(\phi_0, \phi_0+1]$. 

Suppose  $(Z,\Ac)$ is a weak polynomial stability condition with respect to $(\phi_0, \phi_0+1]$.  
\begin{itemize}
\item When $\phi_Z (k)=\phi_0+1$ for every $0 \neq k \in \Ac_{\kernel Z}$, the triple $(Z, \Ac, S)$ is equivalent to a weak polynomial stability condition as defined in \cite{Lo20}. 
\item When $Z$ factors through the  inclusion $\mathbb{C} \subset \CLoovc$, $\phi_0=0$, and  $\phi_Z(k)=1$ for every $0\neq k \in \Ac_{\kernel Z}$, the triple $(Z,\Ac, S)$ is equivalent to  a very weak pre-stability condition in the sense of Piyaratne and Toda \cite{piyaratne2015moduli}.
\item When $Z$ factors through the inclusion $\mathbb{C} \subset \CLoovc$, $\phi_0=0$, and the weak see-saw property holds (see \ref{para:weakseesawprop}), the triple $(Z,\Ac,S)$ is a weak stability condition in the sense of Section \ref{sec:defwsc}.
\item When $\Ac_{\kernel Z}=0$, the set $S$ is empty, and the triple of data $(Z,\Ac,S)$ is equivalent to the pair $(Z,\Ac)$, which  is precisely a polynomial stability condition with respect to $(\phi_0, \phi_0+1]$ in the sense of Bayer \cite{BayerPBSC}.
\item When $\Ac_{\kernel Z}=0$, and $Z$ factors through the canonical inclusion $\mathbb{C} \subset \CLoovc$ with $\phi_0=0$, the pair $(Z,\Ac)$ is precisely a Bridgeland stability condition.
\end{itemize}


\paragraph[Group actions on weak polynomial stability functions]  Recall from \ref{para:Brimfdgroupactions}  that we have two commuting group actions on the Bridgeland stability manifold of a triangulated category $\Dc$: a left-action by the group of autoequivalences of $\Dc$, and a right action by the group $\wt{\mathrm{GL}}^+\! (2,\mathbb{R})$.  In \cite[Section 5]{Lo20}, we defined an analogue of the $\wt{\mathrm{GL}}^+\! (2,\mathbb{R})$-action  on  polynomial stability conditions.  We recall some of the basics here.  

Given any element $T \in \mathrm{GL}(2,\RLoovc)$, we can think of $T$ as a function 
\[
[v_0, \infty)  \to \mathrm{GL}(2,\mathbb{R}) : v \mapsto T_v:=T(v)
\]
for some fixed $v_0 \in \mathbb{R}_{>0}$.  Then we can define
\[
  \GLlp = \{ T \in \mathrm{GL}(2,\RLoovc) : T_v \in \mathrm{GL}^+(2,\mathbb{R}) \text{\quad for $v \gg 0$}\}.
\]

Now for any $T \in \GLlp$, there exists $v_1 \in \mathbb{R}_{>0}$ such that we have a path
\[
  \mathrm{Pa}(T) : [v_1, \infty) \to \mathrm{GL}^+(2,\mathbb{R}) : v \mapsto T_v
\]
in $\mathrm{GL}^+(2,\mathbb{R})$, which we can lift to a path in the universal covering space $\wt{\mathrm{GL}}^+\! (2,\mathbb{R})$
\begin{equation}\label{eq:liftedpath}
  \wt{\mathrm{Pa}} (T) : [v_1,\infty) \to \wt{\mathrm{GL}}^+\!(2,\mathbb{R}) : v \mapsto (T_v,g_v).
\end{equation}
If we  write $P$ to denote the set of all the germs of polynomial phase functions ordered by $\prec$, then $T$ induces an order-preserving bijection 
\[
\Gamma_T : P \to P : \phi \mapsto \phi' 
\]
where
\[
\phi'(v) := g_v(\phi (v)) \text{ for $v \gg 0$}
\]
which depends on the choice $\{g_v\}_v$ (see \cite[5.2]{Lo20} for details).  Note that $\Gamma_T (\phi+1)=\Gamma_T(\phi)+1$ for all $\phi$.

We can  regard elements of $\CLoovc$ as 2 by 1 column vectors with entries from $\RLoovc$.  Then for any triangulated category $\Dc$ and any  weak polynomial stability function $Z : K(\Dc) \to \CLoovc$ on a heart $\Ac$ with respect to $(\phi_0, \phi_0+1]$, and any  $T \in \GLlp$, we can define the group homomorphism $TZ : K(\Dc) \to \CLoovc : E\mapsto T\cdot Z(E)$, and then $TZ$ is a weak polynomial stability function on $\Ac$ with respect to $(\Gamma_T(\phi_0), \Gamma_T (\phi_0)+1]$.

\paragraph[Configuration III-w]  Following the terminology in \cite[5.3]{Lo20}, we will say we are in Configuration III-w  if we have the following setup:
\begin{itemize}
\item[(a)] $\Phi : \Dc \to \Uc$ and $\Psi : \Uc \to \Dc$ are exact equivalences between triangulated categories $\Dc, \Uc$ satisfying $\Psi \Phi \cong \mathrm{id}_\Dc [-1]$ and $\Phi \Psi \cong \mathrm{id}_\Uc [-1]$.
\item[(b)] $\Ac, \Bc$ are hearts of bounded t-structures on $\Dc, \Uc$, respectively, such that $\Phi \Ac \subset D^{[0,1]}_\Bc$.
\item[(c)] There exist weak polynomial stability functions $Z_\Ac : K(\Dc) \to \CLoovc$ and $Z_{\Bc} : K(\Uc) \to \CLoovc$ on $\Ac, \Bc$  with respect to $(a,a+1], (b,b+1]$, respectively, together with an element $T \in \GLlp$ such that
    \begin{equation}\label{eq:AG45-108-17}
 Z_{\Bc} (\Phi E) = T   Z_{\Ac}(E) \text{\quad for all $E\in \Dc$}
\end{equation}
    and a choice of an induced map $\Gamma_T$ such that
    \begin{equation}\label{eq:AG47-23-1}
    a \prec \Gamma_T^{-1}(b)  \prec a+1.
    \end{equation}
\end{itemize}
Writing $\wh{T}=-T^{-1}$, we know from \cite[5.5]{Lo20} that Configuration III-w is symmetric in the pairs $\Phi$ and $\Psi$, $\Ac$ and $\Bc$, $Z_\Ac$ and $Z_\Bc$, $a$ and $b$, $T$ and $\wh{T}$ and also $\Gamma_T$ and $\Gamma_{\wh{T}}$ for appropriate choices of $\Gamma_{\wh{T}}$ and $\Gamma_{\wh{T}}$.  Note that conditions (a) and (b) together imply $(W_{0,\Phi, \Ac, \Bc}, W_{1, \Phi, \Ac, \Bc})$ forms a torsion pair in $\Ac$ \cite[Lemma 3.10(ii)]{Lo20}.

In \cite[Theorem 5.6]{Lo20}, we proved a result on preservation of stability under an autoequivalence when we are in Configuration III-w.  In proving that result, however, we assumed that nonzero objects in $\Ac_{\kernel Z_\Ac}$ (and similarly for $\Bc_{\kernel Z_\Bc}$) have maximal phase.  In Proposition \ref{prop:weakstabconcorr-var1}  below, we prove a similar result for  weak polynomial stability data in general.  We will need the next two lemmas:

\begin{lem}\cite[Theorem 5.6(i)]{Lo20}\label{lem:Lo20Thm5-6part-i}
    Assume Configuration III-w.  Then for any $\Phi_\Bc$-WIT$_0$ object $E$  and any $\Phi_\Bc$-WIT$_1$ object $F$ in $\Ac$ such that  $Z_\Ac (E), Z_\Ac (F) \neq 0$, we have
    \[
      \phi_{\Ac} (F) \preceq \Gamma_T^{-1}(b)  \prec \phi_\Ac (E).
    \]
\end{lem}


\begin{lem}\label{lem:AG52-34-1}\label{lem:seesawmiddlenonzero}
Let $\Dc$ be a triangulated category, and $(Z, \Ac, S)$ a weak polynomial stability data with respect to $(\phi_0, \phi_0+1]$.    Then for any short exact sequence in $\Ac$
\[
0 \to M \to E \to N \to 0
\]
where $Z(E)\neq 0$, we have:
\begin{itemize}
\item Whenever $\phi_Z(M) \preceq \phi_Z (N)$, we have $\phi_Z(M) \preceq \phi_Z (E) \preceq \phi_Z(N)$.
\item Whenever $\phi_Z(M) \succeq \phi_Z (N)$, we have $\phi_Z(M) \succeq \phi_Z (E) \succeq \phi_Z(N)$.
\end{itemize}
\end{lem}

\begin{proof}
Since $Z(E)\neq 0$, $M$ and $N$ cannot both lie in $\Ac_{\kernel Z}$.   Suppose $Z(M)=0$.  Then $Z(E)=Z(N)\neq 0$, and the claims follow easily.  Similarly for the case where $Z(N)=0$.

If $Z(M), Z(N)$ are both nonzero, then the claim follows from the usual seesaw property.
\end{proof}

\begin{prop}\label{prop:weakstabconcorr-var1}
Assume Configuration III-w, and suppose $(Z_\Ac, \Ac, S_\Ac), (Z_\Bc, \Bc, S_\Bc)$ are weak polynomial stability data for some $S_{\Ac}, S_{\Bc}$.  Also suppose $E \in \Ac \setminus \Ac_{\kernel Z_{\Ac}}$  is $\Phi_\Bc$-WIT$_i$ for $i=0$ or $1$, and that  the following vanishing condition holds:
\begin{itemize}
\item For any $\Bc$-short exact sequence $0 \to M \to \Phi E[i]\to N \to 0$     where $M, N \neq 0$, if $M$ or $N$ lies in $\Bc_{\kernel Z_\Bc}$, then we have $\phi_\Bc (M) \preceq  ( \text{resp.} \prec ) \phi_\Bc (N)$.
\end{itemize}
Then
\[
  \text{$E$ is $Z_\Ac$-semistable (resp.\ -stable) }\Rightarrow \text{ $\Phi E$ is $Z_\Bc$-semistable (resp.\ -stable)}.
\]
\end{prop}

\begin{proof}
We prove only the statement for `semistable', as the same argument works for the `stable' case with the obvious modifications.

(I) Suppose $E$ is $\Phi_\Bc$-WIT$_0$, and consider a $\Bc$-short exact sequence
\[
0 \to M \to \Phi E \to N \to 0
\]
where $M, N \neq 0$.  Since $\Phi E$ is $\Psi_\Ac$-WIT$_1$, so is $M$.  Hence we obtain  the long exact sequence in $\Ac$
\[
0 \to \Psi^0_\Ac N \to \wh{M} \overset{\gamma}{\to} E \to \Psi^1_\Ac N \to 0
\]
where $\wh{M} := \Psi M [1]$.  If $\image\gamma = 0$, then $\Psi^0_\Ac N \cong \wh{M}$.  Since $\Psi^0_\Ac N$ is $\Phi_\Bc$-WIT$_1$ while $\wh{M}$ is $\Phi_\Bc$-WIT$_0$, it follows that $\Psi^0_\Ac N \cong \wh{M} = 0$, i.e.\ $M=0$, a contradiction.  Hence $\image \gamma \neq 0$.

Now suppose $Z_\Ac (\image \gamma)=0$. Then $Z_\Ac (\Psi^0_\Ac N) = Z_\Ac (\wh{M})$.  If $Z_\Ac (\wh{M}) \neq 0$, then by  Lemma \ref{lem:Lo20Thm5-6part-i} we have $\phi_\Ac (\Psi^0_\Ac N) \prec \phi_\Ac (\wh{M})$, contradicting $\Psi^0_\Ac N, \Psi M$ having the same nonzero value under $Z_\Ac$.  So we must have $Z_\Ac (\wh{M} )=0$, in which case $Z_\Bc (M)=0$ by \eqref{eq:AG45-108-17}, i.e.\ $M \in \Bc_{\kernel Z_\Bc}$.  The vanishing condition in the hypotheses then ensures that $\Phi E$ is not destabilised by $M$.

Suppose instead that $Z_\Ac (\image \gamma ) \neq 0$.  We divide further into two cases:

\noindent\textbf{Case 1}: $Z_\Ac (\Psi^0_\Ac N)=0$.  In this case, we have $Z_\Ac (\wh{M}) = Z_\Ac (\image \gamma)\neq 0$ and hence $\phi_\Ac (\wh{M}) = \phi_\Ac (\image \gamma)$.  On the other hand, the $Z_\Ac$-semistability of $E$ gives $\phi_\Ac (\image \gamma) \preceq \phi_\Ac (\Psi^1_\Ac N)$, and so we have
\[
  \phi_\Ac (\wh{M}) \preceq \phi_\Ac (\Psi^1_\Ac N).
\]

If $Z_\Ac (\Psi^1_\Ac N)=0$, then $N \in \Bc_{\kernel Z_{\Bc}}$, and the vanishing condition ensures $\Phi E$ is not destabilised.  So let us assume  $Z_\Ac (\Psi^1_\Ac N)\neq 0$, in which case $Z_\Bc (\Phi (\Psi^1_\Ac N))=Z_\Bc (N) \neq 0$, and so the inequality above  implies (since $\wh{M}, \Psi^1_\Ac N$ are both $\Phi_\Bc$-WIT$_0$ objects in $\Ac$ with nonzero $Z_\Ac$)
\[
  \phi_\Bc (M) \preceq \phi_\Bc (\Phi (\Psi^1_\Ac N)) = \phi_\Bc (N).
\]
That is, $\Phi E$ is not destabilised in this case.

\noindent\textbf{Case 2}: $Z_\Ac (\Psi^0_\Ac N) \neq 0$.  In this case, we have that $\image \gamma$ is $\Phi_\Bc$-WIT$_0$ since $\wh{M}$ is so.  Also, since $Z_\Ac (\Psi^0_\Ac N)$ and $Z_\Ac (\image \gamma)$ are both nonzero, we have $Z_\Ac (\wh{M})\neq 0$. Then 
\begin{align*}
  &\phi_\Ac (\Psi^0_\Ac N) \prec \phi_\Ac (\image \gamma) \text{ by Lemma \ref{lem:Lo20Thm5-6part-i}} \\
  \Rightarrow \, &\phi_\Ac (\wh{M}) \prec \phi_\Ac (\image \gamma) \preceq \phi_\Ac (E) \text{ by the usual seesaw property and Lemma \ref{lem:seesawmiddlenonzero}} \\
  \Rightarrow \, &\phi_\Ac (\wh{M}) \prec \phi_\Ac (E) \\
  \Rightarrow \, &\phi_\Bc (M) \prec \phi_\Bc (\Phi E). \text{ since $\wh{M}, E$ are both $\Phi_\Bc$-WIT$_0$ objects with nonzero $Z_\Ac$}
\end{align*}
  As a result,  all of $M, \Phi E$ and $N$ have nonzero values under $Z_\Ac$ and hence under $Z_\Bc$, and so the last  inequality implies  $\phi_\Bc (M) \prec \phi_\Bc (N)$.

(II) Now we turn to the case where  $E$ is $\Phi_\Bc$-WIT$_1$, and consider a $\Bc$-short exact sequence
\[
0 \to M \to \Phi E [1]\to N \to 0
\]
where $M, N \neq 0$.  Since $\Phi E[1]$ is $\Psi_\Ac$-WIT$_0$, so is $N$, and we obtain  the long exact sequence in $\Ac$
\[
0 \to \Psi^0_\Ac M \to E \overset{\gamma}{\to} \Psi N \to \Psi^1_\Ac M \to 0.
\]
As in the  case where $E$ is $\Phi_\Bc$-WIT$_0$, we must have $\image \gamma \neq 0$, or else $\Psi N \cong \Psi^1_\Ac M$ which forces $\Psi N=0$, a contradiction.

Suppose $Z_\Ac (\image \gamma) =0$.  In this case, we have $Z_\Ac (\Psi N) = Z_\Ac (\Psi^1_\Ac M)$.    If $Z_\Ac (\Psi N)=0$, then $N \in \Bc_{\kernel Z_\Bc}$, and the vanishing condition in the hypotheses  ensures $\Phi E [1]$ is not destabilised by $N$.  If $Z_\Ac (\Psi N) \neq 0$, then  $Z_\Ac (\Psi^1_\Ac M)  =Z_\Ac (\Psi N) \neq 0$.  Since $\Psi N$ is $\Phi_\Bc$-WIT$_1$ while $\Psi^1_\Ac M$ is $\Phi_\Bc$-WIT$_0$,  by Lemma \ref{lem:Lo20Thm5-6part-i} we have $\phi_\Ac (\Psi N) \prec \phi_\Ac (\Psi^1_\Ac M)$, contradicting $\Psi N, \Psi^1_\Ac M$ having the same nonzero value under $Z_\Ac$.

Now suppose $Z_\Ac (\image \gamma) \neq 0$.  Again, we divide further into two cases:

\noindent\textbf{Case 1:} $Z_\Ac (\Psi^1_\Ac M)=0$.  In this case, we have $0 \neq Z_\Ac (\image \gamma) = Z_\Ac (\Psi N)$ and so $\phi_\Ac (\image \gamma) = \phi_\Ac (\Psi N)$.  On the other hand, the $Z_\Ac$-semistability of $E$ gives $\phi_\Ac (\Psi^0_\Ac M) \preceq \phi_\Ac (\image \gamma)$. 

If $Z_\Ac (\Psi^0_\Ac M)=0$, then $M \in \Bc_{\kernel Z_\Bc}$, and the vanishing condition in the hypotheses says $\Phi E [1]$ is not destabilised.  So suppose $Z_\Ac (\Psi^0_\Ac M) \neq 0$ from here on.  Then $Z_\Ac (\Psi M) \neq 0$ and $\phi_\Ac (\Psi^0_\Ac M) = \phi_\Ac (\Psi M)$, and from the previous paragraph we have
\begin{align*}
  & \phi_\Ac (\Psi M) \preceq \phi_\Ac (\Psi N) \\
  \Rightarrow \,\, &\phi_\Bc (M) \preceq \phi_\Bc (N) \text{ since $\Psi M, \Psi N$ both have nonzero values under $Z_\Ac$}.
\end{align*}

\noindent\textbf{Case 2:} $Z_\Ac (\Psi^1_\Ac M) \neq 0$.  In this case, we have
\[
  \phi_\Ac (E) \preceq \phi_\Ac (\image \gamma) \prec \phi_\Ac (\Psi^1_\Ac M).
\]
Here,  the first inequality follows from the $Z_\Ac$-semistability of $E$ and Lemma \ref{lem:seesawmiddlenonzero}.  The second inequality  follows from $\image \gamma$ being $\Phi_\Bc$-WIT$_1$ (since $\Psi N$ is so) with nonzero $Z_\Ac$, and $\phi_\Ac (\Psi^1_\Ac M)$ being $\Phi_\Bc$-WIT$_0$ with nonzero $Z_\Ac$, and Lemma \ref{lem:Lo20Thm5-6part-i}.  Note that $Z_\Ac (\image \gamma)\neq 0$ and $Z_\Ac (\Psi^1_\Ac M)\neq 0$ together imply $Z_\Ac (\Psi N) \neq 0$, and so the second inequality above also implies $\phi_\Ac (\image \gamma) \prec \phi_\Ac (\Psi N)$.  Thus 
\begin{align*}
  &\phi_\Ac (E) \prec \phi_\Ac (\Psi N) \\
  \Rightarrow \, &\phi_\Bc (\Phi E[1]) \prec \phi_\Bc (N) \text{ since $E, \Psi N$ are both $\Phi_\Bc$-WIT$_1$ with nonzero $Z_\Ac$}.
\end{align*}
Since $Z_\Ac (M) \neq 0$ and hence $Z_\Bc (M) \neq 0$, all of $M, \Phi E[1]$ and $N$ have nonzero values under $Z_\Bc$, and it follows that $\phi_\Bc (M) \prec \phi_\Bc (N)$.
\end{proof}

\paragraph[Weak seesaw property] \label{para:weakseesawprop} Given a weak polynomial stability data $(Z, \Ac, S)$ on a triangulated category $\Dc$ with respect to $(\phi_0, \phi_0+1]$, we say it has the weak seesaw property if, for every short exact sequence of nonzero objects
\begin{equation}\label{eq:AG52-34-ses}
0 \to E' \to E \to E'' \to 0
\end{equation}
in the heart $\Ac$, we have either 
\begin{equation}\label{eq:seesaw1}
\phi_Z(E') \preceq \phi_Z (E) \preceq \phi_Z (E'')
\end{equation}
or 
\begin{equation}\label{eq:seesaw2}
\phi_Z (E') \succeq \phi_Z (E) \succeq \phi_Z(E'').
\end{equation}

Given a weak polynomial stability data as above, note that 
\begin{itemize}
    \item If $Z(E'), Z(E), Z(E'')$ are all nonzero, then the usual seesaw property holds, and we have either \eqref{eq:seesaw1} or \eqref{eq:seesaw2}.
    \item If $Z(E)\neq 0$, but either $Z(E')=0$ or $Z(E'')=0$, then we also have either \eqref{eq:seesaw1} or \eqref{eq:seesaw2}.
\end{itemize}
Therefore, requiring a weak polynomial stability data to have the weak seesaw property is equivalent to requiring only short exact sequences \eqref{eq:AG52-34-ses} in $\Ac_{\kernel Z_\Ac}$ to have the weak seesaw property (cf.\ Definition \ref{Def:Weak}(ii)).

The following is essentially  \cite[Theorem 5.6(ii)]{Lo20}:

\begin{lem}\label{lem:Lo20-Thm5p6ii-gen}
Assume Configuration III-w,  and suppose $(Z_\Ac, \Ac, S_\Ac), (Z_\Bc, \Bc, S_\Bc)$ are weak polynomial stability data for some $S_{\Ac}, S_{\Bc}$.   Suppose $E \in \Ac \setminus \Ac_{\kernel Z_\Ac}$ is a $Z_\Ac$-semistable object such that
\begin{itemize}
\item $E$ has no nonzero $\Ac$-subobjects in $\Ac_{\kernel Z_\Ac} \cap W_{0, \Phi, \Ac, \Bc}$.
\item $E$ has no nonzero $\Ac$-quotients in $\Ac_{\kernel Z_\Ac} \cap W_{1, \Phi, \Ac, \Bc}$.
\end{itemize}
Then $E$ is $\Phi_\Bc$-WIT$_i$ for $i=0$ or $1$.
\end{lem}

\begin{proof}
By \cite[Lemma 3.10(ii)]{Lo20}, there is an $\Ac$-short exact sequence
\[
0 \to E_0 \to E \to E_1 \to 0
\]
where $E_i$ is $\Phi_\Bc$-WIT$_i$ for $i=0,1$.  Suppose $E_0, E_1$ are both nonzero.  Then by our assumptions on $E$, neither $E_0$ nor $E_1$ lies in $\Ac_{\kernel Z_{\Ac}}$.   Lemma \ref{lem:Lo20Thm5-6part-i} then gives $\phi_\Ac (E_0) \succ \phi_\Ac (E_1)$, contradicting the $Z_\Ac$-semistability of $E$.
\end{proof}

\begin{lem}\label{lem:tech1}
Suppose $(Z, \Ac, S)$ is a weak polynomial stability data on a triangulated category $\Dc$ that satisfies the weak seesaw property.    Then for any nonzero $Z$-semistable objects $E, F$ in $\Ac$ such that $\phi_Z (E) \succ \phi_Z (F)$, we have $\Hom(E,F)=0$.
\end{lem}


\begin{proof}
Take any morphism $\alpha : E \to F$ and assume $\image \alpha \neq 0$.

If $\kernel \alpha = \cokernel \alpha = 0$, then $E \cong F$ and we have a contradiction. 

Suppose $\kernel \alpha$ and $\cokernel \alpha$ are both nonzero.  The semistability of $E$ together with the weak seesaw property imply $\phi_Z (\kernel \alpha) \preceq \phi_Z(E) \preceq \phi_Z (\image \alpha)$, whereas the semistability of $F$ and the weak seesaw property together give $\phi_Z (\image \alpha) \preceq \phi_Z(F) \preceq \phi_Z (\cokernel \alpha)$.  Overall, we have $\phi_Z (E) \preceq \phi_Z (F)$ which is a contradiction.

Next, suppose $\kernel \alpha =0$ but $\cokernel \alpha \neq 0$.  Then the semistability of $F$ implies $\phi_Z (E) \preceq \phi (\cokernel \alpha)$.  Now, if $Z(F) \neq 0$, then $\phi_Z (E) \preceq \phi_Z(F) \preceq \phi_Z (\cokernel \alpha)$ by Lemma \ref{lem:seesawmiddlenonzero}, giving us a contradiction.  On the other hand, if $Z(F)=0$, then $E$ and $\cokernel \alpha$ also lie in $\Ac_{\kernel Z}$, and by the weak seesaw property along with the assumption $\phi_Z(E) \succ \phi_Z (F)$, we must have
\[
\phi_Z (E) \succ \phi_Z (F) \succeq \phi_Z (\cokernel \alpha),
\]
contradicting the semistability of $F$.  

The argument for the case where $\kernel \alpha\neq 0$ and $\cokernel \alpha =0$ is dual to the argument in the previous paragraph.
\end{proof}

\begin{prop}\label{Prop:HNcorr}
Assume Configuration III-w, and suppose $(Z_\Ac, \Ac, S_\Ac), (Z_\Bc, \Bc, S_\Bc)$ are weak polynomial stability data for some $S_{\Ac}, S_{\Bc}$, such that they both satisfy the weak seesaw property.  In addition, assume we have
\begin{itemize}
\item[(d)] $\Phi (\Ac_{\kernel Z_\Ac}) = \Bc_{\kernel Z_\Bc}$.
\item[(e)] Every nonzero object $F$ in $\Ac_{\kernel Z_\Ac}$ satisfies $\phi_\Ac (F) \succ \Gamma^{-1}_T (b)$, while every nonzero object $F'$ in $\Bc_{\kernel Z_\Bc}$ satisfies $\phi_\Bc (F') \preceq \Gamma_T(a)+1$.
\item[(f)] For any objects $F', F''$ in $\Ac$ that are both $\Phi_\Bc$-WIT$_i$ for some $i$, we have
    \[
      \phi_\Ac (F') \prec (\text{resp.}\, \preceq) \phi_\Ac (F'') \Rightarrow \phi_\Bc (\Phi F'[i]) \prec (\text{resp.}\, \preceq) \phi_\Bc (\Phi F'' [i]),
    \]
    and that the same statement holds when we switch the roles of $\Ac, \Phi, \phi_\Ac$ with $\Bc, \Psi, \phi_\Bc$, respectively.
\item[(g)] $\Ac_{\kernel Z_\Ac}$ is generated via direct sum by a finite number of $Z_\Ac$-semistable objects, and the $Z_\Ac$-semistable objects in $\kernel Z_\Ac$ are taken by $\Phi$ to $Z_\Bc$-semistable objects in $\Bc$.  The analogous statement holds when we switch the roles of $\Ac, Z_\Ac, \Phi$ with $\Bc, Z_\Bc, \Psi$, respectively.
\end{itemize}
Then whenever $Z_\Ac$ has the HN property on $\Ac$, $Z_\Bc$ also has the HN property on $\Bc$.
\end{prop}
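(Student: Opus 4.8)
The plan is to transfer the HN property from $\Ac$ to $\Bc$ by pulling objects back along $\Psi$, filtering in $\Ac$, and pushing forward along $\Phi$. By the symmetry of Configuration III-w recorded in \cite[5.5]{Lo20}, we have $\Psi \Bc \subset D^{[0,1]}_\Ac$, so for nonzero $G \in \Bc$ the complex $\Psi G$ has cohomology only in degrees $0,1$, giving $\Psi^0_\Ac G, \Psi^1_\Ac G \in \Ac$. A short computation with the canonical triangle of $\Psi G$, using $\Phi \Psi \cong \mathrm{id}[-1]$ together with the fact that $\Psi^0_\Ac G$ is $\Phi_\Bc$-WIT$_1$ and $\Psi^1_\Ac G$ is $\Phi_\Bc$-WIT$_0$, realises $G$ as a short exact sequence
\[
  0 \to \Phi \Psi^0_\Ac G[1] \to G \to \Phi \Psi^1_\Ac G \to 0
\]
in $\Bc$. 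It therefore suffices to produce HN filtrations of the two outer terms and to splice them.

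First I would take the finite HN filtrations of $\Psi^0_\Ac G$ and $\Psi^1_\Ac G$ in $\Ac$, which exist since $Z_\Ac$ has the HN property, and transform their factors. The key structural input is the phase--WIT correspondence contained in Theorem \ref{thm:configIII-w}(i)--(ii): a $Z_\Ac$-semistable object of nonzero central charge is $\Phi_\Bc$-WIT$_0$ when its phase exceeds $\Gamma_T^{-1}(b)$ and $\Phi_\Bc$-WIT$_1$ when it is below, so the WIT torsion pair is the tilt of the $Z_\Ac$-slicing at $\Gamma_T^{-1}(b)$. Consequently the HN factors of the WIT$_1$ object $\Psi^0_\Ac G$ are again WIT$_1$, and those of the WIT$_0$ object $\Psi^1_\Ac G$ are WIT$_0$. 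For a factor $F$ with $Z_\Ac(F) \neq 0$, Proposition \ref{prop:weakstabconcorr-var1} then makes $\Phi F[1]$ (in the first case) or $\Phi F$ (in the second) a $Z_\Bc$-semistable object of $\Bc$; a factor $F$ in $\kernel Z_\Ac$ is, by condition (g), a direct sum of $Z_\Ac$-semistable kernel objects, each sent by $\Phi$ into $\Bc \cap \kernel Z_\Bc$ as a $Z_\Bc$-semistable object by conditions (d) and (g).

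It then remains to assemble these into an HN filtration of $G$ with strictly decreasing phases. By Theorem \ref{thm:configIII-w}(i), the separating inequality \eqref{eq:AG47-23-1}, and condition (e), the $Z_\Bc$-phases of the factors of $\Phi \Psi^0_\Ac G[1]$ all lie above $\Gamma_T^{-1}(b)$ while those of $\Phi \Psi^1_\Ac G$ all lie below, so the displayed short exact sequence is already phase-decreasing between its two steps. Within each step, condition (f) shows that $\Phi$ (resp.\ $\Phi[1]$) preserves the order of the phases of the HN factors, so refining each step by the transformed filtration produces a filtration of $G$ whose factors are $Z_\Bc$-semistable of strictly decreasing phase; the Hom-vanishing of Lemma \ref{lem:tech1}(i) guarantees that these successive extensions constitute a genuine HN filtration. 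Finiteness is automatic, since both input filtrations are finite and $\kernel Z_\Ac$ is finitely generated by (g).

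The hard part will be the bookkeeping around the kernel. Because phases in $\kernel Z_\Ac$ and $\kernel Z_\Bc$ are now defined by limits rather than set to the maximal value, the usual device of placing kernel pieces at the top of the filtration is unavailable, and one must instead use conditions (d) and (e) to pin down the limit-defined phase of each transformed kernel factor relative to the non-kernel factors and to confirm it respects the decreasing-phase requirement. Equally delicate is verifying that the weak seesaw property survives the transform, since it is needed both to apply Lemma \ref{lem:tech1} and to legitimise the splicing in the previous step; establishing this compatibility, and with it the precise phase--WIT correspondence invoked above, is the technical crux of the argument.
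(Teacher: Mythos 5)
Your proposal is correct and follows essentially the same route as the paper's proof: the same decomposition of $G \in \Bc$ into its $\Psi_\Ac$-WIT$_0$ part $\Phi\Psi^0_\Ac G[1]$ and WIT$_1$ part $\Phi\Psi^1_\Ac G$, transforming the $Z_\Ac$-HN filtrations of $\Psi^0_\Ac G$ and $\Psi^1_\Ac G$ factor by factor via Theorem \ref{thm:configIII-w}(i) and Proposition \ref{prop:weakstabconcorr-var1}, and splicing using conditions (e) and (f). The ``kernel bookkeeping'' you flag as the crux is exactly what the paper's Steps 1--4 carry out, using precisely the tools you name --- conditions (d), (e), (g), Lemma \ref{lem:tech1}(i), and Lemma \ref{lem:Lo20-Thm5p6ii-gen} --- to verify the WIT property of each HN factor and the hypotheses (a)/(b) of Proposition \ref{prop:weakstabconcorr-var1}.
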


\begin{proof}
Suppose $Z_\Ac$ has the HN property on $\Ac$.  Take any $0 \neq E\in \Bc$, and consider the filtration in
\[
0 \to E_0 \to E \to E_1 \to 0
\]
in $\Bc$ where $E_i$ is $\Psi_\Ac$-WIT$_i$ for $i=0,1$.  This is taken by $\Psi [1]$ to the exact triangle
\[
  \Psi^0_\Ac E_0 [1] \to \Psi E [1] \to \Psi^1_\Ac E_1 \to \Psi^0_\Ac E_0 [2]
\]
where $\Psi^0_\Ac E_0$ is $\Phi_\Bc$-WIT$_1$ and $\Psi_\Ac^1 E_1$ is $\Phi_\Bc$-WIT$_0$.  Since $Z_\Ac$ has the HN property on $\Ac$, we have $Z_\Ac$-HN filtrations
\begin{align}
  0= M_0 \subsetneq M_1 \subsetneq \cdots \subsetneq M_m &= \Psi^0_\Ac E_0, \label{eq:filt1}\\
  0= N_0 \subsetneq N_1 \subsetneq \cdots \subsetneq N_n &= \Psi^1_\Ac E_1. \label{eq:filt2}
\end{align}
We will show that the concatenation of these two filtrations is taken by $\Phi$ to the HN filtration of $E$.  To do this, we will first show that each $M_i/M_{i-1}$ (resp.\ $N_j/N_{j-1}$) is $\Phi_\Bc$-WIT$_1$ (resp.\ $\Phi_\Bc$-WIT$_0$), and then show that $\Phi$ takes each of the $M_i/M_{i-1}[1]$,  $N_j/N_{j-1}$ to a $Z_\Bc$-semistable object in $\Bc$.

\textbf{Step 1.} Suppose $E_0 \neq 0$.  In this step, we show that each $M_i/M_{i-1}$ is $\Phi_\Bc$-WIT$_1$.  To begin with, note that $M_1$ is a nonzero subobject of $\Psi^0_\Ac E_0$ in $\Ac$, and so is $\Phi_\Bc$-WIT$_1$.  By condition (d), $M_1$ cannot lie in $\Ac_{\kernel Z_\Ac}$, and so by Lemma \ref{lem:Lo20Thm5-6part-i}, we have $\phi_\Ac (M_1) \preceq \Gamma^{-1}_T(b)$, and hence
\begin{equation}\label{eq:ONC26-1}
  \phi_\Ac (M_i/M_{i-1}) \preceq \Gamma^{-1}_T(b)
\end{equation}
for all $1 \leq i \leq m$.  By condition (e), this also means that none of the $M_i/M_{i-1}$ lies in $\Ac_{\kernel Z_\Ac}$.

Since $\Ac_{\kernel Z_\Ac}$ is a Serre subcategory of $\Ac$, every $Z_\Ac$-HN factor of any object in $\Ac_{\kernel Z_\Ac}$ has $\phi_\Ac \succ \Gamma^{-1}_T(b)$ by condition (e).  Since $M_i/M_{i-1}$ is $Z_\Ac$-semistable, it follows from Lemma \ref{lem:tech1} that $$\Hom (\Ac_{\kernel Z_\Ac}, M_i/M_{i-1})=0$$ for all $1 \leq i \leq m$.  Also, we have $\Ac_{\kernel Z_\Ac}\cap W_{1,\Phi,\Ac, \Bc}=0$ by condition (d).  Then by  Lemma \ref{lem:Lo20-Thm5p6ii-gen}, we know that $M_i/M_{i-1}$ is $\Phi_\Bc$-WIT$_j$ for either $j=0$ or $1$, for all $i$.  From \eqref{eq:ONC26-1} and Lemma \ref{lem:Lo20Thm5-6part-i}, we know $j$ must be $1$.

\textbf{Step 2.} Suppose $E_1 \neq 0$.  In this step, we show that each $N_i/N_{i-1}$ is $\Phi_\Bc$-WIT$_0$.  Since $\Psi^1_\Ac E_1$ is $\Phi_\Bc$-WIT$_0$, it follows that $N_n/N_{n-1}$ is also $\Phi_\Bc$-WIT$_0$.    We will now prove $N_i/N_{i-1}$ is also $\Phi_\Bc$-WIT$_0$ for $1 \leq i < n$.

Take any  $1 \leq i < n$ and set  $\wt{N} = N_i/N_{i-1}$.  Consider the $\Ac$-short exact sequence
\[
0 \to \wt{N}_0 \to \wt{N} \to \wt{N}_1 \to 0
\]
where $\wt{N}_i$ is $\Phi_\Bc$-WIT$_i$ for $i=0, 1$.  

Suppose $\wt{N}_1 \neq 0$.  By condition (d), we must have $Z_\Ac (\wt{N}_1) \neq 0$.   Then by condition (e) together with Lemma \ref{lem:Lo20Thm5-6part-i}, regardless of whether $\wt{N}_0$ lies in $\Ac_{\kernel Z_\Ac}$, we have
\[
  \phi_\Ac (\wt{N}_0) \succ \Gamma^{-1}_T(b) \succeq \phi_\Ac (\wt{N}_1),
\]
which contradicts the $Z_\Ac$-semistability of $\wt{N}$ unless $\wt{N}_0=0$.  Hence $\wt{N}_0$ must vanish, and $\wt{N}= \wt{N}_1$ is $\Phi_\Bc$-WIT$_1$. By condition (d), $\wt{N}_1$ cannot lie in $\Ac_{\kernel Z_\Ac}$.  Then  by Lemma \ref{lem:Lo20Thm5-6part-i} again, we have $\phi_\Ac (\wt{N}_1) \preceq \Gamma^{-1}_T(b)$.  

On the other hand, since $N_n/N_{n-1}$ is $\Phi_\Bc$-WIT$_0$, regardless of whether it is in $\Ac_{\kernel Z_\Ac}$, we must have 
\[
\Gamma^{-1}_T(b) \prec \phi_\Ac (N_n/N_{n-1}),
\]
by   Lemma \ref{lem:Lo20Thm5-6part-i} and condition (e).  Thus we have 
\[
 \phi_\Ac (\wt{N})= \phi_\Ac (\wt{N}_1) \preceq \Gamma^{-1}_T(b) \prec \phi_\Ac (N_n/N_{n-1})
\]
overall, contradicting the property of an HN filtration.  Hence $\wt{N}_1$ must vanish, i.e.\ $\wt{N}$ must be $\Phi_\Bc$-WIT$_0$.

\textbf{Step 3.} In this step, we show that $\Phi (M_i/M_{i-1}) [1]$ is a $Z_\Bc$-semistable object in $\Bc$ for all $1 \leq i \leq m$.  Set $G = M_i/M_{i-1}$ for some $i$.  We know  $G$ is $\Phi_\Bc$-WIT$_1$  and $Z_\Ac (G)\neq 0$ (hence $Z_\Bc (\Phi G [1])\neq 0$) from Step 1.  We will use  Proposition \ref{prop:weakstabconcorr-var1} to check the semistability of $\Phi G[1]$.

Take any  $\Bc$-short exact sequence of the form
\[
0 \to P \to \Phi G [1] \to Q \to 0
\]
where $P, Q \neq 0$. In the case where $Q \in \Bc_{\kernel Z_\Bc}$, we have $Q$ is $\Psi_\Ac$-WIT$_1$ by condition (d) while $\Phi G [1]$ is $\Psi_\Ac$-WIT$_0$, and so $\Hom (\Phi G [1], Q)=0$, meaning the above short exact sequence cannot exist.  

In the case where $P \in \Bc_{\kernel Z_\Bc}$, we have from condition (e) that $\phi_\Bc (P) \preceq \Gamma_T(a)+1$.  On the other hand, that $Z_\Bc (\Phi G [1])\neq 0$ implies $Z_\Bc (Q)\neq 0$.  That $\Phi G[1]$ is $\Psi_\Ac$-WIT$_0$ then implies $Q$ is $\Psi_\Ac$-WIT$_0$.  From Lemma \ref{lem:Lo20Thm5-6part-i} and the symmetry in Configuration III-w, we now have $\phi_\Bc (Q) \succ \Gamma_T (a)+1$.  Overall, we have
\[
\phi_\Bc (P) \preceq \Gamma_T(a)+1 \prec \phi_B (Q).
\]
Thus by Proposition \ref{prop:weakstabconcorr-var1}, $\Phi G[1]$ is $Z_\Bc$-semistable.

\textbf{Step 4.} In this step, we show that $\Phi (N_i/N_{i-1})$ is a $Z_\Bc$-semistable object in $\Bc$ for all $1 \leq i \leq n$.  Let $G = N_i/N_{i-1}$ for any $1 \leq i \leq n$.  We know from Step 2 that $G$ is $\Phi_\Bc$-WIT$_0$.  If $G \in \Ac_{\kernel Z_\Ac}$, then $\Phi G$ is  $Z_\Bc$-semistable by condition (g), so let us assume from now on that $G \notin \Ac_{\kernel Z_\Ac}$.  

Consider any $\Bc$-short exact sequence of the form
\[
0 \to P \to \Phi G \to Q \to 0
\]
where $P, Q \neq 0$.  Suppose $Q \in \Bc_{\kernel Z_\Bc}$.  Then $Q$ is $\Psi_\Ac$-WIT$_1$ by condition (d).  Also, that $\Phi G$ is $\Psi_\Ac$-WIT$_1$ implies $P$ is $\Psi_\Ac$-WIT$_1$.  Thus the above short exact sequence is taken by $\Psi[1]$ to the short exact sequence in $\Ac$
\[
0 \to \Psi P [1] \to G \to \Psi Q [1] \to 0.
\]
By the semistability of $G$ and Lemma \ref{lem:seesawmiddlenonzero}, it follows that 
\[
\phi_\Ac (\Psi P [1]) \preceq \phi_\Ac (G) \preceq \phi_\Ac (\Psi Q [1])
\]
which, by condition (f), implies 
\[
\phi_\Bc ( P ) \preceq \phi_\Bc (\Phi G) \preceq \phi_\Bc (Q ).
\]

Next, suppose  $P \in \Bc_{\kernel Z_\Bc}$.  Since $\Bc_{\kernel Z_\Bc}$ is a Serre subcategory of $\Bc$, condition (g) implies that $P$ is a direct sum of $Z_\Bc$-stable objects in $\Bc_{\kernel Z_\Bc}$.  Let $P'$ be any such $Z_\Bc$-stable direct summand.  Then by condition (g) again, $\Psi P'[1]$ is $Z_\Ac$-semistable in $\Ac$.  Let $\gamma$ denote the composite injection $P' \to P \to \Phi G$ in $\Bc$.  Then $\gamma$  is taken by $\Psi [1]$ to a nonzero map $\Psi \gamma [1] : \Psi P' [1] \to G$, and so we must have $\phi_\Ac (\Psi P'[1]) \preceq \phi_\Ac (G)$ by Lemma \ref{lem:tech1}.  Then, by condition (f), we have $\phi_\Bc (P') \preceq \phi_\Bc (\Phi G)$.  Since this holds for any $Z_\Bc$-stable direct summand $P'$ of $P$, it follows  by the weak seesaw property that $\phi_\Bc (P) \preceq \phi_\Bc (\Phi G)$.  We will now show   $\phi_\Bc (P) \preceq \phi_\Bc (Q)$.  Since $P \in \Bc_{\kernel Z_\Bc}$ while $\Phi G \notin \Bc_{\kernel Z_\Bc}$, it follows that $Q \notin  \Bc_{\kernel Z_\Bc}$ and that  $\phi_\Bc (\Phi G) = \phi_\Bc (Q)$.  Now we have two cases:
\begin{itemize}
\item If $\phi_\Bc (P) = \phi_\Bc (\Phi G)$, then $\phi_\Bc (P) = \phi_\Bc (Q)$.
\item If $\phi_\Bc (P) \prec \phi_\Bc (\Phi G)$, then $\phi_\Bc (P) \prec \phi_\Bc (Q)$.
\end{itemize}
Overall, we have $\phi_\Bc (P) \preceq \phi_\Bc (Q)$, and by Proposition \ref{prop:weakstabconcorr-var1}, $\Phi G$ is $Z_\Bc$-semistable.

\textbf{Step 5.} We can now construct the $Z_\Bc$-HN filtration of $E$.  By Steps 1 and 3, the filtration \eqref{eq:filt1} is taken by $\Phi [1]$ to a filtration of $E_0$ in $\Bc$
\begin{equation}\label{eq:filt1transf}
0 = \wh{M_0} \subsetneq \wh{M_1} \subsetneq \cdots \subsetneq \wh{M_m} = E_0
\end{equation}
where each factor $\wh{M_i}/\wh{M_{i-1}}$ is $Z_\Bc$-semistable; similarly, by Steps 2 and 4,   \eqref{eq:filt2} is taken by $\Phi$ to the filtration
\begin{equation}\label{eq:filt2transf}
0 = \wh{N_0} \subsetneq \wh{N_1} \subsetneq \cdots \subsetneq \wh{N_n} = E_1
\end{equation}
where each factor $\wh{N_i}/\wh{N_{i-1}}$ is $Z_\Bc$-semistable.

By condition (f) and the fact that \eqref{eq:filt1} and \eqref{eq:filt2} are both HN filtrations, we have
\[
\phi_\Bc (\wh{M_1}) \succ \phi_\Bc (\wh{M_2}/\wh{M_1}) \succ \cdots \succ \phi_\Bc (\wh{M_m}/\wh{M_{m-1}})
\]
and
\[
\phi_\Bc (\wh{N_1}) \succ \phi_\Bc (\wh{N_2}/\wh{N_1}) \succ \cdots \succ \phi_\Bc (\wh{N_n}/\wh{N_{n-1}}).
\]
By condition (e) and Lemma \ref{lem:Lo20Thm5-6part-i}, regardless of whether $\wh{N_1}$ lies in $\Bc_{\kernel Z_\Bc}$, we have
\[
\phi_\Bc (\wh{M_m}/\wh{M_{m-1}}) \succ \Gamma_T(a)+1 \succeq \phi_\Bc (\wh{N_1}).
\]
Therefore, the concatenation of \eqref{eq:filt1transf} and \eqref{eq:filt1transf} is the HN filtration of $E$.
\end{proof}

\section{Answer to the question for $D_\alpha \geq 0$ and proof of Theorem 1.4}\label{sec:Dalphageqzero}

In this section, we study Question \ref{que:main-1} for a line bundle $L=\cO_X(\alpha)$, where $$\alpha=\Theta+(D_\alpha+e)f$$ with $D_\alpha\geq0$.   {When working with weak stability condition, we will follow the notation in Section \ref{sec:defwsc} and more generally in \cite{CLSY1}.
Let $\sigma_{\omega, B}=(Z_{\omega, B}, \Coh^{\omega, B})$ be a Bridgeland stability condition of standard form, with the central charge defined by
	\begin{equation}
		\label{Equ:CenCharNoTd}
		Z_{\omega,B}(E)=-\int e^{-i\omega} \ch^B(E).
	\end{equation}
	Consider $\omega\in \langle \Theta, f \rangle$.
In this Section we prove the following claim: $\cO_X(\alpha)$ is $\sigma_{\omega, 0}$-stable for any $\omega\in \langle \Theta, f\rangle$ with $D_\omega>0$ and $V_\omega>0$.
    
    The main idea of the proof is applying Proposition \ref{prop:weakstabconcorr-var1} to weak stability conditions under the action of autoequivalences $$\Psi:=\Phi(\_\otimes \cO_X(-\alpha))$$ and $$\Upsilon:=\Phi(\_)\otimes \cO_X(f).$$
    
    In particular, we first show that $\Psi(\cO_X(\alpha))$ is $\sigma_a^R$-semistable for all $a$. Then Proposition \ref{prop:weakstabconcorr-var1} implies that $\cO_X(\alpha)$ is $\sigma_b^L$-semistable. Using the going up lemma for weak stability conditions, we conclude $\cO_X(\alpha)$ is $\sigma_{V, H}$-stable for all $V$. 
    
    Then we show that $\Upsilon(\cO_X(\alpha))$ is $\sigma_D$-stable for all $D$. Using the going up lemma along the ray of a fixed $D$ and $V\geq0$, we conclude $\Upsilon(\cO_X(\alpha))$ is $\sigma_{\omega, 0}$-stable for any $\omega\in \langle\Theta, f \rangle$ with $D_\omega>0$ and $V_\omega>0$. Then the Claim follows from Proposition \ref{prop:AG52-20-1} (specifically, the form in \eqref{eq:AG52-21-2}).
    
    We begin by studying the image of some objects under the relative Fourier-Mukai transform $\Phi$. Let $X$ be a Weierstra{\ss} elliptic K3 surface with a fixed section $\Theta$. Let $p:X\rightarrow B$ be the projection to the base $B$. Denoting the line bundle $R^1p_*\cO_X\simeq (p_*\omega_{X/B})^*$ by $w$. 
	Recall the relative Fourier-Mukai transform $\Phi$ has kernel:
	\begin{equation*}
		P=I_\Delta\otimes \pi_1^*\cO_X(\Theta)\otimes \pi_2^*\cO_X(\Theta)\otimes \rho^*w^{-1}.
	\end{equation*}
	Here $\pi_i: X\times_B X\rightarrow X$ is the projection to the $i$-th factor, and $\rho: X\times_BX\rightarrow B$. Let $e=\Theta^2=-2$.
 
	\begin{prop}
		\label{Prop:ImageO}
  We have
 
	(i)		$\Phi(\cO_X)=\cO_{\Theta}(-2)[-1]$,

        (ii) $\Phi(\cO_X(-\Theta))=\cO_X(\Theta)[-1]$.
	\end{prop}
	\begin{proof}
 		First note that $$\rho^*w^{-1}\simeq \pi_2^*\omega_{X/B}\simeq \pi_2^*\cO_X(2f).$$ For (i), apply the functor $R\pi_{1*}(\pi_2^*\cO_X(\Theta+2f)\otimes\_)$ to the exact sequence:
		\begin{equation*}
			0\rightarrow I_\Delta\rightarrow \cO_{X\times_BX}\rightarrow \cO_\Delta\rightarrow 0.
		\end{equation*}
		We get the following exact triangle:
		\begin{equation}\label{Equ:Phi(OX)}
			R\pi_{1*}(\pi_2^*\cO_X(\Theta+2f)\otimes I_\Delta)\rightarrow R\pi_{1*}(\pi_2^*\cO_X(\Theta+2f))\xrightarrow{f} R\pi_{1*}(\pi_2^*\cO_X(\Theta+2f)\otimes \cO_\Delta)\xrightarrow{+1}.
		\end{equation}
		We have $R\pi_{1*}(\pi_2^*\cO_X(\Theta+2f)\otimes \cO_\Delta)\simeq \cO_X(\Theta+2f)$. Also 
		\begin{equation*}
			\begin{split}
				R\pi_{1*}\pi_2^*\cO_X(\Theta+2f)&\simeq p^*Rp_{*}\cO_X(\Theta+2f)\\
				&\simeq \cO_X(2f)\otimes p^*(Rp_*\cO_X(\Theta)).
			\end{split}
		\end{equation*}
		Using the exact sequence 
		\begin{equation*}
			0\rightarrow \cO_X\rightarrow \cO_X(\Theta)\rightarrow \cO_\Theta(-e)\rightarrow 0,
		\end{equation*}
		by base change we have $R^1p_*\cO_X(\Theta)=0$, hence we have
		\begin{equation*}
			0\rightarrow p_*\cO_X\rightarrow p_*\cO_X(\Theta)\rightarrow \cO_B(-e)\rightarrow R^1p_*\cO_X\rightarrow 0.
		\end{equation*}
		 Since $R^1p_*\cO_X\simeq \cO_B(-2)$, we have $p_*\cO_X(\Theta)\simeq p_*\cO_X\simeq \cO_B$. 
		
 Then $R\pi_{1*}(\pi_2^*\cO_X(\Theta+2f)\otimes I_\Delta)\simeq \text{Cone}(\cO_X(2f)\to \cO_X(\Theta+2f))[-1]$. By Lemma \ref{lem:lbfdonetr} (ii), we have $\Phi(\cO_X)[1]$ is a torsion sheaf. Hence $R\pi_{1*}(\pi_2^*\cO_X(\Theta+2f)\otimes I_\Delta)\simeq \cO_\Theta(\Theta+2f)[-1]$, and
		\begin{equation*}			\Phi(\cO_X)=\cO_X(\Theta)\otimes \cO_\Theta(\Theta+2f)[-1]\simeq \cO_\Theta(2\Theta+2f)[-1]\simeq \cO_\Theta(-2)[-1].
		\end{equation*}
For (ii), by Lemma \ref{lem:lbfdonetr}, we know that $\Phi\cO_X(-\Theta)[1]$ is a line bundle. Then by the Chern character formula for relative Fourier-Mukai transform, see \cite[(6.21)]{FMNT}, we have 
  \[
 \ch_1(\Phi(\cO_X(-\Theta)))=-\Theta, 
 \]
 \[
 \ch_2(\Phi(\cO_X(-\Theta)))=1\cdot [\text{pt}].
 \]
Hence $\Phi(\cO_X(-\Theta))=\cO_X(\Theta)[-1]$.
 \end{proof}

	\begin{thm}
		\label{Thm:OTheta(-2)stable}
  Given $a\in\mathbb{P}^1$, if $D_\alpha+1+m\geq -1$,  we have $\cO_{\Theta}(m)$ is $\sigma^R_a$-stable.
	\end{thm}

Recall from 5.23 in \cite[4.23]{CLSY1}
 that 
we denote $L_0=\cO(-(D_\alpha+1)f)$ and $L_1=\cO(\Theta-(D_\alpha+2)f)$.

	\begin{proof}
		Consider the following number in \cite[Exercise 6.19]{MSlec}:
		$$C_{\omega'_0}=\text{min}\{\ch_1^{B'_0}(E)\cdot \omega'_0|\ch_1^{B'_0}(E)\cdot \omega'_0>0\}.$$
		We have $C_{\omega'_0}=\frac{1}{2},$ and 
		\begin{equation*}
			\Im(Z'_0(\cO_\Theta(m)))=\frac{1}{2}.
		\end{equation*} 
		
		Assume there is a short exact sequence in $\B$:
		\begin{equation*}
			0\rightarrow E\rightarrow \cO_\Theta(m)\rightarrow Q\rightarrow 0.
		\end{equation*}
		Then $E$ is a coherent sheaf on $X$, and $H^{-1}(Q)\in \F_{\omega'_0, B'_0}$. 
		
		\textbf{Case 1}: $\Im(Z'_0(E))=\ch_1^{B'_0}(E)\cdot \omega'_0>0$.  In this case,  $\Im(Z'_0(E))\geq \frac{1}{2}$.  If $\Im(Z'_0(E))> \frac{1}{2}$, 	then
		\begin{equation*}
			\ch_1^{B'_0}(H^{-1}(Q))\cdot \omega'_0=\ch_1^{B'_0}(E)\cdot \omega'_0-\Theta\cdot \omega'_0>0
		\end{equation*} 
		contradicting  $H^{-1}(Q)\in \mathcal{F}_{\omega'_0, B'_0}$.  So we must have  $\Im(Z'_0(E))=\frac{1}{2}$, in which case $\Im Z_0'(Q)=0$, which implies  $\Im(Z'_0(H^{-1}(Q)))=0= \Im (Z'_0 (H^0(Q)))$. 
		By Serre duality, we have $$\Ext^1(\cO_\Theta(m), L_0)\simeq H^1(\cO_\Theta(D_\alpha+1+m))^*=0$$ for $D_\alpha+1+m\geq -1$. Now we further divide into  two cases:
		
		(i) $H^{-1}(Q)\notin \kernel (Z'_0)$.  In this case, $Q$ itself does not lie in $\kernel(Z'_0)$, and so the observation $\Im Z_0'(Q)=0$  above implies $\phi_{\sigma_a}(Q)=1$.
		
		(ii) $H^{-1}(Q)\in \kernel (Z'_0)$.  In this case we must have  $H^0(Q)\neq 0$.  For, if $H^0(Q)=0$, then $Q = H^{-1}(Q)[1]$ is an object of $\B_{\kernel (Z_0')}$ which implies $Q \cong (\oplus^m L_0)[1]$ for some $m>0$.  Then the Ext-vanishing above means $E \cong \OO_\Theta (-2) \oplus (\oplus^m L_0)$.  Since $\Hom (\OO_\Theta (m),L_1)=0=\Hom (L_0,L_1)$, we have $\Hom (E, \T_\A [-1])=0$ and hence $E \in \F_\A$, i.e.\ $E \in \A \cap \Coh (X)=\T_{\omega_0', B_0'}$.  Hence any quotient sheaf of $E$ must also lie in $\T_{\omega_0', B_0'}$; in particular, it follows that $L_0$ lies in $\T_{\omega_0', B_0'}$ which is false. 

         Now that we know $H^0(Q)\neq 0$, 
by the long exact sequence of cohomology, we have $H^0(Q)$ is a quotient sheaf of $\cO_\Theta(m)$, hence is supported in dimension $0$. Thus in (ii) we also have $\phi_{\sigma_a}(Q)=1$. In either (i) or (ii), $E$ does not destabilize $\cO_\Theta(m)$. 
  

		\textbf{Case 2}:  $\Im(Z'_0(E))=0$.  In this case, $E$ fits in a short exact sequence of coherent sheaves
		$$0\rightarrow T\rightarrow E\rightarrow F\rightarrow 0$$
        where $T \in \F_\A$ and $F \in \T_\A[-1]$.  Since we must have $\Im Z_0'(T)=0$ and $T \in \F_\A \cap \Coh (X) \subseteq \T_{\omega_0', B_0'}$, the sheaf $T$ must be torsion.

		If $T\neq 0$, then $T$ supports on dimension $0$, and so $E$ has a nonzero 0-dimensional subsheaf; however, this is impossible since  $H^{-1}(Q)\in \F_{\omega'_0, B'_0}$ is torsion-free while $\OO_\Theta (m)$ is pure 1-dimensional.
		Hence $T=0$, then $E\in \T_\A[-1]$. Then $\phi_{\sigma_a}(E)=\frac{1}{4}$. On the other hand 
		$$Z'_0(\cO_\Theta(m))=-(m+1)-D_\alpha-\frac{5}{2}+\frac{1}{2}i.$$
		When $D_\alpha+1+m\geq -1$, $\phi_{\sigma_a}(Q)=\phi_{\sigma_a}(\cO_\Theta(m))>\frac{1}{2}$. Hence $E$ does not destabilize $\cO_\Theta(m)$. 
	\end{proof}

\begin{prop}
	\label{Prop:CorrKer}
	We have the following correspondence involving objects in the kernels of $Z_{V, H}$ and $Z_D$:
		\begin{equation*}
		\Phi(\cO_\Theta(-1)[1]\otimes \cO_X(-\alpha))=L_0[1],
	\end{equation*} and 
	\begin{equation*}
		\Phi(\cO_X[1]\otimes \cO_X(-\alpha))=L_1.
	\end{equation*}
	\end{prop}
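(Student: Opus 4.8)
The plan is to reduce both isomorphisms to the two transforms already recorded in Propositions \ref{Prop:ImageO} and \ref{Prop:ImageO(-C)}, stripping off the twist in the fiber direction. The key device is that $\Phi$ is a \emph{relative} Fourier--Mukai transform, so it commutes with tensoring by line bundles pulled back from the base: for any $E\in D^b(X)$ and any line bundle $N$ on $Y$ one has $\Phi(E\otimes p^\ast N)\cong \Phi(E)\otimes p^\ast N$. This follows from the projection formula together with $p\circ\pi_1=p\circ\pi_2$ on $X\times_Y X$. Since $\cO_X(f)\cong p^\ast\cO_Y(\mathrm{pt})$, every twist by a multiple of $f$ can be moved outside $\Phi$, and this is exactly what upgrades a numerical coincidence of Chern characters to an honest isomorphism once the transform is known to be a line bundle (here I use that $X$ is K3, so a line bundle is determined by its $c_1$, and $e=-\Theta^2=2$).

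I would treat the second isomorphism first, as it is the more direct. Rewrite $\cO_X[1]\otimes\cO_X(-\alpha)=\cO_X(-\alpha)[1]$, and split off the fiber twist using $\alpha=\Theta+(D_\alpha+e)f$, namely $\cO_X(-\alpha)=\cO_X(-\Theta)\otimes p^\ast\cO_Y(-(D_\alpha+e))$. Applying the commutation property and then the case $C=\Theta$ of Proposition \ref{Prop:ImageO(-C)}, which reads $\Phi(\cO_X(-\Theta))=\cO_X(\Theta)[-1]$, gives $\Phi(\cO_X(-\alpha))=\cO_X(\Theta)[-1]\otimes\cO_X(-(D_\alpha+e)f)=\cO_X(\Theta-(D_\alpha+2)f)[-1]=L_1[-1]$. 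Shifting by $[1]$ yields $\Phi(\cO_X(-\alpha)[1])=L_1$, as claimed.

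For the first isomorphism I would begin with the projection formula to simplify the twisted sheaf. Since $i^\ast\cO_X(-\alpha)=\cO_{\mathbb{P}^1}(-\alpha\cdot\Theta)=\cO_{\mathbb{P}^1}(-D_\alpha)$, we get $i_\ast\cO_\Theta(-1)\otimes\cO_X(-\alpha)\cong \cO_\Theta(-1-D_\alpha)=\cO_\Theta\otimes p^\ast\cO_Y(-1-D_\alpha)$. It then remains to compute $\Phi(\cO_\Theta)$. I would obtain this by applying $\Phi$ to the structure sequence $0\to\cO_X(-\Theta)\to\cO_X\to\cO_\Theta\to 0$, feeding in $\Phi(\cO_X)=\cO_\Theta(-2)[-1]$ (Proposition \ref{Prop:ImageO}) and $\Phi(\cO_X(-\Theta))=\cO_X(\Theta)[-1]$ (Proposition \ref{Prop:ImageO(-C)}). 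In the resulting triangle both outer terms are sheaves placed in cohomological degree $1$, and the long exact cohomology sequence collapses to $0\to \Hc^0(\Phi\cO_\Theta)\to \cO_X(\Theta)\xrightarrow{\psi}\cO_\Theta(-2)\to \Hc^1(\Phi\cO_\Theta)\to 0$. The map $\psi$ is the restriction map $\cO_X(\Theta)\to\cO_X(\Theta)|_\Theta=\cO_\Theta(-2)$, which is surjective, so $\Hc^1(\Phi\cO_\Theta)=0$ and $\Phi(\cO_\Theta)$ equals the kernel $\cO_X$, a WIT$_0$ sheaf. The commutation property then gives $\Phi(\cO_\Theta(-1-D_\alpha))=\cO_X\otimes\cO_X(-(D_\alpha+1)f)=L_0$, and shifting by $[1]$ produces $L_0[1]$.

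The numerical type is pinned down in advance by a direct substitution into the cohomological Fourier--Mukai formulas of \cite{FMNT}, which give $\ch(\Phi(\cO_\Theta(-1-D_\alpha)))=(1,-(D_\alpha+1)f,0)=\ch(L_0)$ and $\ch(\Phi(\cO_X(-\alpha)))=-\ch(L_1)$; these serve as a cross-check of the identifications above. The main obstacle is precisely the object-level step in the $\cO_\Theta$ computation: matching Chern characters is insufficient, since the same invariants $(1,0,0)$ would result even if $\psi$ were zero (in which case $\Phi\cO_\Theta$ would be a genuine two-term complex). Thus the crux is to verify that $\psi$ is nonzero, hence the surjective restriction map; this I would justify by noting that $\cO_X(-\Theta)$ and $\cO_X$ are both $\Phi$-WIT$_1$ and that $\Phi[1]$ is fully faithful on WIT$_1$ sheaves, so the nonzero inclusion $\cO_X(-\Theta)\hookrightarrow\cO_X$ must be carried to a nonzero map, and $\Hom(\cO_X(\Theta),\cO_\Theta(-2))$ is one-dimensional with every nonzero element surjective.
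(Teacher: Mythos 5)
Your proposal is correct, and it splits against the paper's proof in an interesting way. For the second isomorphism you do exactly what the paper does: write $\cO_X(-\alpha)[1]$, pull the fiber twist $-(D_\alpha+e)f$ through $\Phi$ using the projection-formula commutation $\Phi(E\otimes p^\ast N)\cong\Phi(E)\otimes p^\ast N$, and apply the $C=\Theta$ case of Proposition \ref{Prop:ImageO(-C)}. For the first isomorphism, however, the paper simply quotes \cite[Corollary 5.9]{Lo7}, which says at one stroke that $\Phi(i_\ast\cO_\Theta(m))$ is a line bundle with $\ch_1=(\ch_2(i_\ast\cO_\Theta(m))-1)f$, so that $\Phi(i_\ast\cO_\Theta(-D_\alpha-1))\cong\cO(-(D_\alpha+1)f)$ immediately. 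You instead derive $\Phi(\cO_\Theta)\cong\cO_X$ internally, by applying $\Phi$ to the structure sequence $0\to\cO_X(-\Theta)\to\cO_X\to\cO_\Theta\to 0$ and feeding in Propositions \ref{Prop:ImageO} and \ref{Prop:ImageO(-C)}; your long exact sequence $0\to\Hc^0(\Phi\cO_\Theta)\to\cO_X(\Theta)\xrightarrow{\psi}\cO_\Theta(-2)\to\Hc^1(\Phi\cO_\Theta)\to 0$ is right, and you correctly isolate the one nontrivial point, namely that $\psi\neq 0$ — matching $\ch=(1,0,0)$ alone would not exclude a two-term complex. Your justification goes through (faithfulness of the equivalence $\Phi$ already forces $\psi\neq0$; the detour through WIT$_1$ objects is harmless but unnecessary), and since $\Hom(\cO_X(\Theta),\cO_\Theta(-2))\cong\Hom(\cO_\Theta(-2),\cO_\Theta(-2))$ is one-dimensional with every nonzero element surjective, you get $\Hc^1=0$ and $\Hc^0=\cO_X$. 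The trade-off: the paper's citation is shorter and gives the statement for all twists $i_\ast\cO_\Theta(m)$ uniformly, while your argument is self-contained within the results already proved in this paper and makes visible why $\cO_\Theta$ is $\Phi$-WIT$_0$ with transform $\cO_X$; your numerical cross-checks against the cohomological FMT formulas are also accurate.
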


\begin{proof}

By  \cite[Corollary 5.9]{Lo7}, we have $\Phi(\cO_\Theta(m))$ is a line bundle on $X$ with 
\[
\ch_1(\Phi(\cO_\Theta(m)))=(\ch_2(\cO_\Theta(m))-1)f.
\]
Since $\ch_2(\cO_\Theta(m))=m+1$, we have 
\begin{equation}
	\begin{split}
	\Phi(\cO_\Theta(-1)[1]\otimes \cO_X(-\alpha))&=\Phi(\cO_\Theta(-D_\alpha-1)[1])\\
	&\simeq \cO_X(-(D_\alpha+1)f)[1].
	\end{split}
\end{equation}
For the other equality, we have 
\begin{equation*}
	\begin{split}
	\Phi(\cO_X[1]\otimes \cO_X(-\alpha))&=\Phi(\cO_X(-\Theta-(D_\alpha+e)f)[1])\\
	&\simeq \Phi(\cO(-\Theta))[1]\otimes \cO_X(-(D_\alpha+e)f)\\
	&\simeq \cO_X(\Theta)\otimes \cO_X(-(D_\alpha+e)f) \\
	&\simeq \cO_X(\Theta-(D_\alpha+e)f),
	\end{split}
\end{equation*}
	where in the second line, $\Phi$ commutes with $\otimes \cO(f)$ because of the projection formula and $\Phi$ being a relative integral functor, while 
  the third line follows from  Proposition \ref{Prop:ImageO}.
\end{proof}

\begin{prop}
	\label{Prop:Ofstableweak}
	Given an arbitrary integer $m$, we have 
	
	(i) $\cO_f\otimes\cO(m\Theta)$ is stable in $\sigma_{V, H}$ for $V>0$,
	
	(ii) $\cO_f\otimes\cO(m\Theta)$ is stable in $\sigma^L_b$ for any $b\in\mathbb{P}^1$.
\end{prop}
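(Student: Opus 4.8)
The plan is to mirror the proof of Theorem \ref{Thm:OTheta(-2)stable}, exploiting the fact that $E := \cO_f\otimes\cO(m\Theta)$ realises the \emph{minimal} positive value of the imaginary part of the central charge. First I would record that $\ch(E)=(0,f,m)$, so that $H\cdot\ch_1(E)=H\cdot f=(\Theta+ef)\cdot f=1$ and $\ch_2(E)=m$; in particular $Z_{V,H}(E)=Z_H(E)=-m+i$ independently of $V$. Since $H=\Theta+ef$ is integral with $H\cdot f=1$, the functional $H\cdot\ch_1(-)$ takes only integer values, so $\Im Z_{V,H}=\Im Z_H=H\cdot\ch_1$ has minimal positive value $1$ on $\B^0_{H,-2}$, attained by $E$. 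I would also check that $E\in\T^0_{H,-2}$: it is a torsion sheaf and hence lies in $\T^0_H$, and because its support is a fibre meeting $\Theta$ in a single point while each $\cO_\Theta(i)$ is pure of dimension one, any map $E\to\cO_\Theta(i)$ has $0$-dimensional image inside a pure sheaf, so $\Hom(E,\cO_\Theta(i))=0$ for all $i\leq-2$. Thus $E\in\B^0_{H,-2}$.

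Next I would analyse an arbitrary $\B^0_{H,-2}$-short exact sequence $0\to A\to E\to B\to 0$. The long exact cohomology sequence forces $A$ to be a sheaf in $\T^0_{H,-2}$, with $H^{-1}(B)\in\F^0_{H,-2}$ and $H^0(B)\in\T^0_{H,-2}$, fitting into a four-term exact sequence of sheaves $0\to H^{-1}(B)\to A\to E\to H^0(B)\to 0$; write $G=\mathrm{im}(A\to E)\subseteq E$. The two geometric inputs that drive everything are: (a) $E$ is a rank-one torsion-free sheaf on the \emph{integral} curve $f$, so every nonzero subsheaf $G\subseteq E$ has $\ch_1(G)=f$ and hence $H\cdot\ch_1(G)=1$; and (b) $\F^0_{H,-2}=\langle\cO_\Theta(i):i\leq-2\rangle$ consists of sheaves supported on $\Theta$, so $H\cdot\ch_1(H^{-1}(B))=H\cdot(n\Theta)=0$. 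Minimality of $\Im Z(E)=1$ gives $\{\Im Z(A),\Im Z(B)\}=\{0,1\}$. If $\Im Z(A)=0$, then $H\cdot\ch_1(A)=0$, so by (b) and the sequence $0\to H^{-1}(B)\to A\to G\to 0$ we get $H\cdot\ch_1(G)=0$; by (a) this forces $G=0$, whence $A\cong H^{-1}(B)$ lies in both $\T^0_{H,-2}$ and $\F^0_{H,-2}$ and is therefore $0$. If instead $\Im Z(A)=1$ and $\Im Z(B)=0$, then $H^0(B)$ is a quotient of $E$ with $H\cdot\ch_1=0$, hence $0$-dimensional by (a), and $Z(B)=\ch_2(H^{-1}(B))-\ch_2(H^0(B))\leq 0$ with equality only if $B=0$ (using $\ch_2(\cO_\Theta(i))=i+1<0$ for $i\leq-2$ and $\ch_2(H^0(B))\geq0$). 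So any nonzero proper quotient $B$ has $Z(B)\in\mathbb{R}_{<0}$, giving $\phi(B)=1>\phi(A)$ since $\Im Z(A)=1>0$ puts $\phi(A)\in(0,1)$. Hence $E$ is stable, proving (i) uniformly in $V>0$.

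For part (ii) the central charge is the limiting one $Z_H=-\ch_2+iH\ch_1$ on the same heart, but now $\kernel Z_H=\langle\cO_X[1],i_*\cO_\Theta(-1)\rangle$ is larger and the phases of kernel objects are prescribed by the limits $\phi_b$. The same two cases apply verbatim, and the one genuinely new feature — which I expect to be the main obstacle — is that the factor of $\Im Z=0$ could a priori be a nonzero object of $\kernel Z_H$, whose limit phase $\phi_b$ is strictly less than $1$ and could in principle destabilise $E$. The key point I would emphasise is that this never happens for \emph{this} object: every kernel object is either supported on $\Theta$ or a copy of $\cO_X$ (whose $H^{-1}$ under the heart filtration is again supported on $\Theta$), whereas every nonzero subsheaf of $E$ and the relevant quotient $H^0(B)$ are supported on the fibre $f$; since $f$ and $\Theta$ meet only in dimension $0$ while the kernel objects are pure of dimension one, the identical support and $\ch_2$ computations of part (i) force such a factor to vanish. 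Consequently the delicate limit phases $\phi_b$ drop out of the comparison entirely, and the argument of part (i) carries over for every $b\in\mathbb{P}^1$.

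In effect $E=\cO_f\otimes\cO(m\Theta)$ is ``transverse'' to the kernel of the weak central charge, so the proof is if anything simpler than that of Theorem \ref{Thm:OTheta(-2)stable}: the only bookkeeping required is to confirm the four-term sheaf sequence, the torsion-pair membership of $A$, $H^{-1}(B)$, $H^0(B)$, and the elementary support and $\ch_2$ estimates above, none of which appeals to the sharper Bogomolov--Gieseker inequalities used elsewhere in the paper.
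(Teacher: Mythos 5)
Your part (i) is essentially the paper's own proof: both arguments run on the two facts that $\Im Z_{V,H}=H\cdot\ch_1$ is integer-valued with minimal positive value $1$, attained by $E=\cO_f\otimes\cO(m\Theta)$, and that sheaves supported on $\Theta$ admit no nonzero maps to or from the pure sheaf $E$ supported on a fibre. The genuine gap is in part (ii). Your claim that every object of $\kernel Z_H\cap\B^0_{H,-2}$ is either supported on $\Theta$ or a copy of $\cO_X$ ``whose $H^{-1}$ under the heart filtration is again supported on $\Theta$'' is false, and with it the assertion that the limit phases $\phi_b$ drop out of the comparison. The heart $\B^0_{H,-2}$ is a tilt of $\A^0_H$, itself a tilt of $\Coh(X)$: for $B\in\B^0_{H,-2}$ the sheaf $H^{-1}(B)$ is an extension of a torsion-free sheaf in $\F^0_H$ by an object of $\F^0_{H,-2}$, and need not be supported on $\Theta$. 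In particular $\cO_X[1]$ lies in $\B^0_{H,-2}\cap\kernel Z_H$ with $H^{-1}(\cO_X[1])=\cO_X$ of full support, and quotients of $E$ of this type genuinely occur: a $\B^0_{H,-2}$-short exact sequence $0\to A\to E\to\cO_X[1]\to 0$ is the same as a sheaf extension $0\to\cO_X\to A\to E\to 0$ with $A\in\T^0_{H,-2}$. By Serre duality $\Ext^1(E,\cO_X)\cong H^1(X,E)^\ast$, which is nonzero whenever $m\le 0$ (since $E$ restricts to a line bundle of degree $m$ on the arithmetic-genus-one fibre); a nonsplit class yields $A$ torsion-free of rank one with $c_1(A)=f$, hence $A\in\T^0_H$, and $\Hom(A,\cO_\Theta(i))=0$ for $i\le -2$ because neither $\cO_X$ nor $E$ maps nontrivially to $\cO_\Theta(i)$, so indeed $A\in\T^0_{H,-2}$. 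Thus for $m\le 0$ the support argument cannot eliminate the kernel factor; one must, as the paper does, use support only to discard $i_*\cO_\Theta(-1)$-summands, conclude $Q\in\langle\cO_X[1]\rangle_{ex}$, and then invoke the prescribed limit phase $\phi_b(\cO_X[1])=1$, which is maximal, to see that such a quotient never destabilises $E$. The phases $\phi_b$ are load-bearing, not removable.

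The same misreading of $H^{-1}(B)$ leaves smaller, repairable holes in your part (i). In your first case you should argue directly that a sheaf $A\in\T^0_H$ with $H\cdot\ch_1(A)=0$ is torsion supported on $\Theta$ and points (Lemma \ref{lem:AG51-114}), so that $A\to E$ vanishes by purity of $E$; only then does $A\cong H^{-1}(B)$ become torsion, forcing $H^{-1}(B)\in\F^0_{H,-2}$ and $A\in\T^0_{H,-2}\cap\F^0_{H,-2}=0$. In your second case you dropped the term $V\ch_0$ from $Z_{V,H}(B)$: when $H^{-1}(B)$ has a torsion-free part $F'$, the correct estimate needs $\ch_2(F')\le 0$ (Bogomolov--Gieseker plus Hodge index for the nef class $H$, as in Proposition \ref{Prop:ConstHeartV}) together with $-V\ch_0(F')<0$ for $V>0$ to get strict negativity of $Z_{V,H}(B)$ for $B\neq 0$. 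It is precisely the vanishing of this $V\ch_0$ term at $V=0$ that puts $\cO_X[1]$ into the kernel and makes the limit phases indispensable in part (ii).
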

\begin{proof}
We first prove (i).
	Let $C_H=\text{min}\{H\cdot \ch_1(E)| E\in \mathcal{D}, H\cdot \ch_1(E)>0\}$. Since $H$ is integral, $C_H=1$. 
	
	Note that $H\cdot \ch_1(\cO_f\otimes\cO(m\Theta))=1$ for any $m$. Assuming that there is a short exact sequence in $\B^0_{H, -2}$:
	\begin{equation*}
		0\rightarrow E\rightarrow \cO_f\otimes\cO(m\Theta)\rightarrow Q\rightarrow 0
	\end{equation*}
such that $\phi_{V, H}(E)\geq\phi_{V, H}(Q)$, 
We have $E$ fits into a short exact sequence in $\B^0_{H, -2}$
\begin{equation*}
	0\rightarrow F\rightarrow E\rightarrow T\rightarrow 0
\end{equation*}  
where $F\in \F^0_{H, -2}[1]$, and $T\in \T^0_{H, -2}$. Since $E$ is a coherent sheaf, $F$ must vanish and so  we have $E\in \T^0_{H, -2} \cap \Coh (X)$, and hence $E\in \T^0_H$. 

Now we have
 either $H\cdot \ch_1(E)>0$ or  $H\cdot \ch_1(E)=0$.  In the latter case, $E$ must be a torsion sheaf; but it cannot be supported on dimension $0$ or else $\Hom (E,\OO_f \otimes \OO(m\Theta))=0$, so $E$ must be a pure 1-dimensional sheaf; in fact, $E$ must be supported on $\Theta$ by Lemma 
 \cite[Lemma 5.2]{CLSY1},
 and so  $E\in \langle \cO_\Theta(i)\rangle$.  However, we also have $\Hom(\cO_\Theta(i), \cO_f\otimes\cO(m\Theta))=0$, and so we must have $H\cdot \ch_1(E)>0$ instead, in which case  $H\cdot \ch_1(E)\geq 1$.


Consider the case  $H\cdot \ch_1(E)>1$.  As a morphism in $\Coh (X)$, the image of the morphism $E \to \OO_f \otimes \OO (m\Theta)$ must be nonzero and supported on $f$, and so  $H\cdot \ch_1(H^{-1}(Q))>0$, contradicting that $$H\cdot \ch_1(H^{-1}(Q))\leq 0$$ 
which comes from $H^{-1}(Q) \in \F^0_H$.

We are left to consider $H\cdot \ch_1(E)=1$. Then $H\cdot \ch_1(Q)=0$. If $Q\notin \ker(Z_{V, H})\cap \B^0_{H, k}$, then $Q$ does not destabilize $\cO_f\otimes\cO(m\Theta)$. 

Assume that $Q\in \ker(Z_{V, H})\cap \B^0_{H, -2}$. Then $Q\in\langle \cO_\Theta(-1)\rangle$, then  \[
\Hom(\cO_f\otimes \cO(m\Theta), \cO_\Theta(-1))=0
\]
 implies that $Q$ is zero.

For (ii), the same argument carries over until the last step. Assume that $Q\in \ker(Z_{H, -2})\cap \B^0_{H, -2}$, then by the same reasoning in (i), we have $H\cdot \ch_1(Q)=0$, the image of $E \to \OO_f \otimes \OO (m\Theta)$ as a morphism in $\Coh (X)$ must be supported on $f$, implying $H^0(Q)$ is supported in dimension 0.  From our description of $\kernel (Z_{H,-2})\cap \B^0_{H,-2}$ in Proposition 
 \cite[5.4]{CLSY1}, it follows that  $H^0(Q)=0$ and $Q\in\langle \cO_X[1]\rangle$. But $\phi_b(\cO_X[1])=1$, hence $Q$ does not destabilize $\cO_f\otimes\cO(m\Theta)$.
\end{proof}

\begin{rem}
\label{rem:Qstableweak}
The argument of Proposition \ref{Prop:Ofstableweak} only depends on $\cO_f$ being pure and the Chern character of $\cO_f\otimes \cO(m\Theta)$, hence we can replace $\cO_f$ by any pure sheaf $F$ such that $\ch(F)=\ch(\cO_f)$.
\end{rem}

We denote the category  $\langle O_\Theta(j)|j\leq k\rangle$ by $\mC^0_k$, and the category $\langle L_1\rangle$ by $\mC_1$. Recall that $\A^0_H$ is the abelian category obtained by tilting $\Coh(X)$ at the standard torsion pair $(\T^0_H, \F^0_H)$.
\begin{prop}
	$\B^0_{H, k}$ is obtained by tilting $\Coh(X)$ at the torsion pair
	\begin{equation*}
		\begin{split}
			\T_1&:=\{E\in \Coh(X)|E\in \T^0_H \text{ and } \Hom_{\A^0_H}(E, \mC^0_k)=0\}\\
			\F_1&:=\{E\in \Coh(X)|E\in \langle\F^0_H, \mC^0_k\rangle\}.
		\end{split}
	\end{equation*}
	\end{prop}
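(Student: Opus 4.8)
The plan is to recognise $\Bc^0_{H,k}$ (the heart written $\B_{H,k}$, built as the tilt $\langle \Fc^0_{H,k}[1], \Tc^0_{H,k}\rangle$ of $\Ac^0_H$ at the torsion pair $(\Tc^0_{H,k},\Fc^0_{H,k})$, where $\Fc^0_{H,k}=\mC_0^k$) as a \emph{single} tilt of $\Coh(X)$, and then to match the resulting torsion pair with $(\T_1,\F_1)$. I would invoke the standard characterisation of tilts: a heart $\Bc$ of a bounded t-structure on $D^b(X)$ is the tilt of $\Coh(X)$ at a (necessarily unique) torsion pair if and only if every object of $\Bc$ has coherent cohomology concentrated in degrees $-1$ and $0$, i.e.\ $\Bc\subseteq \langle \Coh(X),\Coh(X)[1]\rangle$; in that case the torsion pair is $(\Bc\cap\Coh(X),\,\{F\in\Coh(X):F[1]\in\Bc\})$. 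So the work splits into verifying this containment for $\Bc^0_{H,k}$ and identifying the two resulting categories.

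First I would check $\Bc^0_{H,k}\subseteq\langle\Coh(X),\Coh(X)[1]\rangle$. Since this subcategory is extension-closed, it suffices to test the two generating classes: $\Fc^0_{H,k}[1]=\mC_0^k[1]\subseteq\Coh(X)[1]$, while $\Tc^0_{H,k}\subseteq\Ac^0_H\subseteq\langle\Coh(X),\Coh(X)[1]\rangle$ because $\Ac^0_H$ is itself a tilt of $\Coh(X)$. This already yields that $\Bc^0_{H,k}$ is a tilt of $\Coh(X)$ at $(\T_1',\F_1')$ with $\T_1':=\Bc^0_{H,k}\cap\Coh(X)$ and $\F_1':=\{F\in\Coh(X):F[1]\in\Bc^0_{H,k}\}$, and it remains to show $\T_1'=\T_1$ and $\F_1'=\F_1$.

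For $\T_1'$: a coherent sheaf has $\Ac^0_H$-cohomology in degrees $0,1$ (as $\Coh(X)\subseteq\langle\Ac^0_H,\Ac^0_H[-1]\rangle$), whereas every object of $\Bc^0_{H,k}$ has $\Ac^0_H$-cohomology in degrees $-1,0$; hence a sheaf $E\in\Bc^0_{H,k}$ must lie in $\Ac^0_H$, i.e.\ $E\in\Tc^0_H=\T_{H,0}$, and then $E\in\Bc^0_{H,k}$ iff $E\in\Tc^0_{H,k}$, i.e.\ $\Hom_{\Ac^0_H}(E,\OO_\Theta(i))=0$ for all $i\le k$, equivalently $\Hom_{\Ac^0_H}(E,\mC_0^k)=0$ (the two conditions agree since $\mC_0^k$ is generated under extensions by the $\OO_\Theta(i)$). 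This is exactly the defining condition of $\T_1$. For $\F_1'$: given a sheaf $F$ with $(\Tc^0_H,\Fc^0_H)$-torsion sequence $0\to t(F)\to F\to F''\to 0$, a short cohomology computation gives $H^{-1}_{\Ac^0_H}(F[1])=t(F)$ and $H^0_{\Ac^0_H}(F[1])=F''[1]$. Because $\Hom(F''[1],\OO_\Theta(i))=\Ext^{-1}(F'',\OO_\Theta(i))=0$, the object $F''[1]$ automatically lies in $\Tc^0_{H,k}$, so $F[1]\in\Bc^0_{H,k}$ if and only if $t(F)\in\Fc^0_{H,k}=\mC_0^k$. Since having $t(F)\in\mC_0^k$ and $F''\in\Fc^0_H=\F_{H,0}$ exhibits $F$ as an extension of an object of $\F_{H,0}$ by one of $\mC_0^k$, this says precisely $F\in\langle\F_{H,0},\mC_0^k\rangle_{ex}=\F_1$.

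The main obstacle is the $\F_1$ identification: I must track how the two-step tilt redistributes cohomology and see that membership $F[1]\in\Bc^0_{H,k}$ is controlled \emph{solely} by the $\Coh(X)$-torsion subsheaf $t(F)$ landing in $\mC_0^k$, the $\Fc^0_H$-quotient part contributing nothing by the vanishing of negative Ext groups into $\OO_\Theta(i)$. A secondary point to pin down is that $\langle\F_{H,0},\mC_0^k\rangle_{ex}$ genuinely coincides with the class of sheaves whose $(\Tc^0_H,\Fc^0_H)$-torsion part lies in $\mC_0^k$; the nontrivial direction is closure under extensions, which is automatic because $\F_1'$ is closed under extensions in $\Coh(X)$ (if $F_1[1],F_2[1]\in\Bc^0_{H,k}$ and $0\to F_1\to F\to F_2\to 0$, then $F[1]$ is an extension of $F_2[1]$ by $F_1[1]$ inside the heart $\Bc^0_{H,k}$, hence $F[1]\in\Bc^0_{H,k}$).
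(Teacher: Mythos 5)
Your proof is correct, but it takes a genuinely different route from the paper's. The paper proceeds constructively: it first verifies by hand that $(\T_1,\F_1)$ is a torsion pair on $\Coh(X)$, building the decomposition of an arbitrary sheaf $E$ by composing its $(\T_{H,0},\F_{H,0})$-sequence with the further decomposition of the torsion part inside $\A$ at the Tramel--Xia torsion pair $(\Tc^0_{H,k},\mC_0^k)$ and reassembling; it then shows $\B_{H,k}\subseteq\langle\F_1[1],\T_1\rangle_{ex}$ by a two-step filtration of each object of $\B_{H,k}$ and concludes equality because an inclusion of hearts of bounded t-structures is an equality. You instead invoke the abstract characterisation of tilts (a Polishchuk-type lemma): once $\B_{H,k}\subseteq\langle\Coh(X),\Coh(X)[1]\rangle$ is checked on the generators $\mC_0^k[1]$ and $\Tc^0_{H,k}$, the torsion-pair property of the canonical pair $(\B_{H,k}\cap\Coh(X),\{F\in\Coh(X):F[1]\in\B_{H,k}\})$ comes for free, and all the work is shifted to identifying these two classes with $\T_1$ and $\F_1$ via $\Ac^0_H$-cohomology, the key point being the vanishing $\Hom(F''[1],\OO_\Theta(i))=\Ext^{-1}(F'',\OO_\Theta(i))=0$, which makes membership of $F[1]$ in $\B_{H,k}$ depend only on the $(\Tc^0_H,\Fc^0_H)$-torsion part of $F$ lying in $\mC_0^k$. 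Your route buys automatic existence and uniqueness of the torsion pair and replaces the paper's iterated extension bookkeeping with a clean t-cohomology criterion; the paper's route is more self-contained (it needs only the heart axioms and the torsion pair in $\A$), and its explicit decompositions mirror the argument it reuses for the analogous description of $\B$ after the transform. The essential inputs coincide: both rely on $(\Tc^0_{H,k},\Fc^0_{H,k})$ being a torsion pair in $\A$ and on the vanishing of negative Ext groups between sheaves, and your closure-under-extensions argument for the inclusion $\langle\F_{H,0},\mC_0^k\rangle_{ex}\subseteq\{F:F[1]\in\B_{H,k}\}$ is sound since $\F_{H,0}[1]$ and $\mC_0^k[1]$ visibly lie in the heart.
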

\begin{proof}
	We first show that $(\T_1, \F_1)$ indeed defines a torsion pair on $\Coh(X)$. It is obvious that 
 \[
 \Hom(\T_1, \F_1)=0.
 \]
	We have that any $E$ in $\Coh (X)$ fits into a short exact sequence
	\begin{equation*}
		0\rightarrow T\rightarrow E\rightarrow F\rightarrow 0
	\end{equation*}
	such that $T\in \T^0_H$ and $F\in \F^0_H$. Then $T\in \A^0_H$. Since $\mC^0_k$ is a torsion-free class in $\A^0_H$, we have $T$ fits into a short exact sequence in $\A^0_H$
	\begin{equation*}
		0\rightarrow T'\rightarrow T\rightarrow C\rightarrow 0
	\end{equation*}
	where $T'\in \T^0_H$ because $T$ is a sheaf, and $C\in \mC^0_k$ and $\Hom(T', \mC^0_k)=0$. Then  we have the following short exact sequence:
	\begin{equation*}
		0\rightarrow T'\rightarrow E\rightarrow F'\rightarrow 0
	\end{equation*}
	where $F'$ is an extension of $C$ and $F$. Hence $T'\in \T_1$ and $F'\in \F_1$.
	
	Let $E\in \B^0_{H, k}$, then we have 
	\begin{equation*}
		0\rightarrow C\rightarrow E\rightarrow A\rightarrow 0
	\end{equation*}
where $C\in \mC^0_k[1]$, $A\in \A^0_H$ and $\Hom_{\A^0_H}(A, \mC^0_k)=0$.
Note that we have the short exact sequence in $\A^0_H$:
\begin{equation*}
	0\rightarrow F_A\rightarrow A\rightarrow T_A\rightarrow 0
\end{equation*}
where $F_A\in\F^0_H[1]$ and $T_A\in \T^0_H$. Furthermore since $\Hom_{\A^0_H}(A, \mC^0_k)=0$, we have $\Hom_{\A^0_H}(T_A, \mC^0_k)=0$. This implies that $T_A\in\T_1$. Then we have $$E\in \langle\F_1[1], \T_1\rangle.$$ Hence $\B^0_{H, k}=\langle\F_1[1], \T_1\rangle$.
	\end{proof}

Recall that $\A$ is the abelian category obtained by tilting $\Coh(X)$ at the standard torsion pair $(\T_{\omega'_0, B'_0}, \F_{\omega'_0, B'_0})$, and that $\B$ is the tilt of $\A$ at the torsion pair $(\T_\A, \F_\A)$ in $\A$ from  \cite[Proposition 5.25]{CLSY1} 
but with a shift, so that $\B = \langle \F_\A, \T_\A [-1]\rangle$.  Here, $\mC_1 = \langle L_1 \rangle=\T_\A[-1]$.

\begin{prop}
	$\B$ is obtained by tilting $\Coh(X)$ at the torsion pair
		\begin{equation*}
		\begin{split}
			\T'_1&:=\{E\in \Coh(X)|E\in\langle \T_{\omega'_0, B'_0}, \mC_1\rangle\}\\
			\F'_1&:=\{E\in \Coh(X)|E\in \F_{\omega'_0, B'_0} \text{ and } \Hom_{\A}(\mC_1, E)=0\}.
		\end{split}
	\end{equation*}
\end{prop}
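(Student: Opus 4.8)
The plan is to recognise $\B$ as the outcome of two successive tilts of $\Coh(X)$ and to collapse these into a single tilt. Recall that $\A'=\langle \F_{\omega',B'}[1],\T_{\omega',B'}\rangle_{ex}$ is the tilt of $\Coh(X)$ at $(\T_{\omega',B'},\F_{\omega',B'})$, and that $\B$ is the tilt of $\A'$ at the torsion pair $(\T_\A,\F_\A)$ (already shown to exist), where $\T_\A=\langle L_1[1]\rangle_{ex}=\mC_1[1]$ and $\F_\A=\{F\in\A':\Hom(L_1[1],F)=0\}$, so that $\B=\langle \F_\A,\T_\A[-1]\rangle_{ex}=\langle \F_\A,\mC_1\rangle_{ex}$. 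First I would verify that $(\T'_1,\F'_1)$ is a torsion pair on $\Coh(X)$. The orthogonality $\Hom(\T'_1,\F'_1)=0$ reduces, by additivity over extensions, to checking $\Hom(\T_{\omega',B'},F)=0$ and $\Hom(L_1,F)=0$ for $F\in\F'_1$: the first holds because $F\in\F_{\omega',B'}$ and $(\T_{\omega',B'},\F_{\omega',B'})$ is a torsion pair, and the second is precisely the defining condition $\Hom_{\A'}(\mC_1,F)=0$ of $\F'_1$, which unwinds to $\Hom_{D^b(X)}(L_1,F)=\Hom(L_1[1],F[1])=0$.

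For the decomposition of an arbitrary $E\in\Coh(X)$, I would compose the two torsion pairs. Applying $(\T_{\omega',B'},\F_{\omega',B'})$ gives a sheaf sequence $0\to T_0\to E\to F_0\to 0$ with $T_0\in\T_{\omega',B'}$ and $F_0\in\F_{\omega',B'}$, so that $F_0[1]\in\A'$. Applying $(\T_\A,\F_\A)$ to $F_0[1]$ produces an $\A'$-sequence $0\to P[1]\to F_0[1]\to W\to 0$ with $P\in\mC_1$ and $W\in\F_\A$. Rotating to a triangle $P\to F_0\to W[-1]\to P[1]$ in $D^b(X)$ and reading the long exact sequence of $\Coh(X)$-cohomology, I expect $H^0(W)=0$, whence $W=F_0^{(2)}[1]$ for a sheaf $F_0^{(2)}\in\F_{\omega',B'}$, together with a short exact sequence of sheaves $0\to P\to F_0\to F_0^{(2)}\to 0$. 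Since $W=F_0^{(2)}[1]\in\F_\A$, the membership criterion for $\F_\A$ yields $\Hom(L_1,F_0^{(2)})=0$, so $F_0^{(2)}\in\F'_1$. Setting $T:=\kernel(E\twoheadrightarrow F_0^{(2)})$, which is an extension of $P\in\mC_1$ by $T_0\in\T_{\omega',B'}$ and hence lies in $\T'_1$, gives the required decomposition $0\to T\to E\to F_0^{(2)}\to 0$.

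It then remains to identify $\B$ with the tilt $\langle \F'_1[1],\T'_1\rangle_{ex}$, which I would do by proving the two containments $\T'_1\subseteq\B$ and $\F'_1[1]\subseteq\B$. For the first, any $T\in\T_{\omega',B'}$ lies in $\A'$ and satisfies $\Hom(L_1[1],T)\cong\Ext^{-1}(L_1,T)=0$, hence $T\in\F_\A\subseteq\B$; combined with $\mC_1\subseteq\B$ (by construction) and the fact that $\B$ is extension-closed, this gives $\T'_1\subseteq\B$. For the second, any $F\in\F'_1$ has $F[1]\in\A'$ with $\Hom(L_1[1],F[1])=\Hom(L_1,F)=0$, so $F[1]\in\F_\A\subseteq\B$. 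Consequently the heart $\langle \F'_1[1],\T'_1\rangle_{ex}$ (which is a heart exactly because $(\T'_1,\F'_1)$ is a torsion pair) is contained in $\B$, and since two hearts of bounded t-structures on $D^b(X)$ with one contained in the other must coincide, we conclude $\B=\langle \F'_1[1],\T'_1\rangle_{ex}$, i.e.\ $\B$ is the tilt of $\Coh(X)$ at $(\T'_1,\F'_1)$. The step I expect to be the main obstacle is the cohomological bookkeeping in the decomposition: one must confirm that the torsion-free part $W$ is a genuinely \emph{shifted sheaf} (i.e.\ $H^0(W)=0$), so that composing the two torsion pairs really splits $E$ into a $\T'_1$-subsheaf and an $\F'_1$-quotient of sheaves; the orthogonality and the two containments are then routine.
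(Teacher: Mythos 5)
Your proposal is correct and follows essentially the same route as the paper: the torsion-pair axioms for $(\T'_1,\F'_1)$ are verified by composing the sheaf-level torsion pair $(\T_{\omega',B'},\F_{\omega',B'})$ with the torsion pair $(\T_\A,\F_\A)$ on $\A'$, exactly as in the paper's proof, including the cohomological bookkeeping showing the $\F_\A$-component of $F_0[1]$ is a shifted sheaf lying in $\F'_1[1]$. The only difference is in which inclusion is checked at the end --- the paper decomposes an arbitrary object of $\B$ to get $\B\subseteq\langle \F'_1[1],\T'_1\rangle_{ex}$, while you verify the generators $\T'_1$ and $\F'_1[1]$ lie in $\B$ and invoke the standard fact that nested hearts of bounded t-structures coincide --- and either containment suffices.
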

\begin{proof}
	We first show that $(\T'_1, \F'_1)$ defines a torsion pair in $\Coh (X)$. It is easy to see that $\Hom(\T'_1, \F'_1)=0$. Given $E\in \Coh(X)$, we have $E$ fits into the short exact sequence 
	\begin{equation*}
		0\rightarrow T\rightarrow E\rightarrow F\rightarrow 0
		\end{equation*}
	with $T\in \T_{\omega'_0, B'_0}$ and $F\in \F_{\omega'_0, B'_0}$. Since $\mC_1[1]=\T_\A$ is a torsion class in $\A$, we have $F[1]$ fits in a short exact sequence 
	\begin{equation*}
		0\rightarrow C[1]\rightarrow F[1]\rightarrow F'[1]\rightarrow 0
	\end{equation*}
where $C\in \mC_1$ and $F'[1]\in \A$ such that $\Hom(\mC_1, F')=0$.  Note that $F \in \Coh (X)$ implies $F' \in \Coh (X)$ and hence $F' \in  \F_{\omega'_0, B'_0}$.  Then we have the short exact sequence 
$$0\rightarrow E'\rightarrow E\rightarrow F'\rightarrow 0,$$
where $E'$ is an extension of $T$ and $C$.
Hence $E'\in \T'_1$, and $F'\in \F'_1$, and we have $(\T'_1, F'_1)$ defines a torsion pair in $\Coh(X)$. 

Let $E\in \B$, there exists a short exact sequence 
\begin{equation*}
	0\rightarrow A\rightarrow E\rightarrow C\rightarrow 0
\end{equation*}
where $A\in \F_{\A}$ and $C\in\T_\A[-1]=\mC_1$ from the definition of $\B$.  Also, $A$ fits into a short exact sequence in $\A$
\begin{equation*}
	0\rightarrow F_A\rightarrow A\rightarrow T_A\rightarrow 0
	\end{equation*}
where $F_A\in\F_{\omega'_0, B'_0}[1]$ and $T_A\in \T_{\omega'_0, B'_0}$. Since $\Hom_{\A}(\mC_1[1], A)=0$, we have $\Hom_{\A}(\mC_1[1], F_A)=0$. Hence $E\in \langle\F'_1[1], \T'_1\rangle$, which implies that $$\B= \langle\F'_1[1], \T'_1\rangle.$$
\end{proof}

Recall that $\Psi=\Phi(\_\otimes \cO_X(-\alpha))$. Let $g\in \mathrm{GL}^+(2, \mathbb{R})$ be defined by 
	\begin{equation*}
	g:=\begin{pmatrix}
		1 & 0\\ 
		0 & R_{\omega'_0}
	\end{pmatrix}		
	\begin{pmatrix}
		1 & R_{B'_0}\\ 
		0 & 1
	\end{pmatrix}	
	\begin{pmatrix}
		0 & 1\\ 
		-1 & 0
	\end{pmatrix}
	\begin{pmatrix}
		1 & R_\alpha\\ 
		0 & 1
	\end{pmatrix}.
\end{equation*}
Since $R_{\omega'_0}=\frac{1}{2}$, $R_{B'_0}=\frac{1}{2}$ and $R_\alpha=1$, we have
	\begin{equation*}
	g:=\begin{pmatrix}
		-1/2 & 1/2\\ 
		-1/2 & -1/2
	\end{pmatrix},	
\end{equation*}
and
\begin{equation}\label{eq:AG52-17-2}
	Z'_0(\Psi(E))=g Z_{H}(E).
\end{equation}
Note that $g$ can be view as a clockwise rotation of $\frac{3}{4}\pi$.
To apply Proposition \ref{prop:weakstabconcorr-var1}, we first explore the relation between $\B^0_{H, -2}$ and $\B$. 

Recall that if $\Pc$ is a slicing on a triangulated category $\Tc$ in the sense of \cite[Definition 3.3]{StabTC} and $\Lambda$ is an autoequivalence of $\Tc$, then we can define a new slicing $\Lambda \cdot \Pc$ by setting $(\Lambda \cdot \Pc)(t) = \Lambda (\Pc (t))$ for any $t \in \mathbb{R}$.

\begin{thm}
	\label{Thm:CorrHeart1}
	Give $b\in \mathbb{P}^1$, let $\cP_b$ denote the slicing for the weak stability condition $\sigma_b^L$, and let $\Pc = \Psi \cdot \Pc_b$.  Denote the heart $\cP((\frac{3}{4}, \frac{7}{4}])$ by $\B'_{H, -2}$. Then we have  $$\B'_{H, -2}=\B.$$
	\end{thm}
	
\begin{rem}\label{rem:AG52-17-1}
From the relation \eqref{eq:AG52-17-2}, we have 
\[
  Z'_0(E)= gZ_H(\Psi^{-1}(E))
\]
for all $E \in D^b(X)$.  Since the matrix $g$ decreases the phases of complex numbers by $\tfrac{3}{4}\pi$, it follows that for any $E \in \Bc'_{H,-2}$, the complex number $Z'_0(E)$ lies on the upper-half complex plane.  Consequently, any $E \in \Bc'_{H,-2}$ satisfies \[
\Im Z'_0(E) = \omega'_0 \ch_1^{B'_0}(E) \geq 0.
\]
\end{rem}

\begin{lem}\label{lem:AG52-17-2}
We have that $(\kernel (Z_H \circ \Psi^{-1})) \cap \Bc'_{H,-2}$ is generated by $L_1$ and $L_0[1]$ via direct sum.
\end{lem}

\begin{proof}
Recall from 
\cite[Section 5.6]{CLSY1} that $\OO_X[1]$ and $\OO_\Theta (-1)$, which are the generators of $(\kernel Z_H) \cap \Bc^0_{H,-2}$, have phases $1$ and $\tfrac{1}{2}$ in $\sigma^L_b$ respectively.  Since $\OO_X[1]$ and  $\OO_\Theta (-1)$ are both $\sigma^L_b$-stable by  
 \cite[Remark 5.9]{CLSY1}, we have 
\begin{align*}
    L_1 &= \Psi(\OO_X[1]) \in \Pc (1), \\
    L_0 &=\Psi (\OO_\Theta (-1)) \in \Pc (\tfrac{1}{2})
\end{align*}
and so  $L_0[1], L_1 \in \Bc'_{H,-2}$.

Now, take any nonzero object $E \in \Bc'_{H,-2}=\Pc (\tfrac{3}{4}, \tfrac{7}{4}]$ such that $Z_H(\Psi^{-1} (E))=0$.  Let $\Pc_b$ denote the slicing for $\sigma_b^L$.  From the definition of $\Bc'_{H,-2}$, we know $\Psi^{-1}(E)$ lies in $\Pc_b (\tfrac{3}{4}, \tfrac{7}{4}]$, which is contained in $\langle \Bc^0_{H,-2}[1], \Bc^0_{H,-2}\rangle$.  That is, $\Psi^{-1} E$ fits in an exact triangle
\[
F[1] \to \Psi^{-1}E \to T \to F[2]
\]
where $F, T \in \Bc^0_{H,-2}$.  Since $Z_H(\Psi^{-1}(E))=0$ and $Z_H$ takes $\Bc^0_{H,-2}$ into a non-strict half plane, it follows that $F, T$ both lie in $\Bc^0_{H,-2} \cap \kernel (Z_H)$.  However, we have $F \in \Pc_b (0, \tfrac{3}{4}]$ and $T \in \Pc_b (\tfrac{3}{4},1]$, and so by \cite[Remark 5.9]{CLSY1} 
and the phase computations in 
\cite[Section 5.6]{CLSY1}, we have that $F \in \langle \OO_\Theta (-1)\rangle$ and $T \in \langle \OO_X [1] \rangle$.  So $\Psi^{-1}E \in \langle \OO_\Theta (-1)[1], \OO_X [1] \rangle$ which implies $E \in \langle L_0[1], L_1 \rangle$.

Moreover, since $L_0[1] \in \Pc (\tfrac{3}{2})$ and $L_1 \in\Pc (1)$, and
\begin{align*}
    \Ext^1(L_0[1], L_1)& \cong \Ext^1( \OO_\Theta (-1),\OO_X) = 0, \\
    \Ext^1(L_1, L_0[1]) &\cong \Ext^1 (\OO_X, \OO_\Theta (-1))=0,
\end{align*}
it follows that $E$ satisfies $Z_H (\Psi^{-1}(E))=0$ if and only if $E$ is a direct sum of copies of $L_0[1]$ and $L_1$.
\end{proof}

\begin{proof}[Proof of Theorem \ref{Thm:CorrHeart1}]

	 We follow the argument in the proof of \cite[Lemma 6.20]{MSlec}.
  
 
\textbf{Part 1.} We have $$\C(x)=\Psi(Q\otimes\cO_X(\alpha)[1]),$$ 
	where $Q$ is a pure sheaf supported in dimension $1$ with $\ch(Q)=\ch(\cO_f)$. For any $k\in \mathbb{Z}$, $Q\otimes \cO_X(kf)$ is also a pure sheaf supported in dimension $1$ with $\ch(Q \otimes \OO_X (kf))=\ch(\cO_f)$. From Proposition \ref{Prop:Ofstableweak} and Remark \ref{rem:Qstableweak}, we know that $Q \otimes\OO_X(\alpha)[1]$ lies in $\Pc_b (\tfrac{7}{4})$ and is $\sigma_b^L$-stable.
 Recall in Remark \ref{Rem:stableness}, we say an object is $\sigma^L_b$-stable in $\cP_b(t)$ if it is a shift of stable object in $\cP_b(t')$ for some $0<t'\leq 1$. We denote $Q\otimes \cO_X(\alpha)$ by $Q'$. 
 
Claim: $Q'[1]$ is also stable in $\cP_b((\frac{3}{4}, \frac{7}{4}])$. 

Note that the objects in $\ker(Z_H)\cap \cP_b((\frac{3}{4}, \frac{7}{4}])$ are direct sums of $\cO_X[1]$ and $\cO_\Theta(-1)[1]$ by Lemma \ref{lem:AG52-68-1b} below. We can define the phases of these objects by taking the limits of $\phi_{V_\omega, D_\omega}$ as in Section 9.1. Then the phases of objects in $\ker(Z_H)\cap\cP_b((\frac{3}{4}, \frac{7}{4}])$ are in $[1, \frac{3}{2}]$.

Assume the claim is not true, we have a short exact sequence 
\begin{equation}
    \label{Equ:12.80}
0\to E\to Q'[1]\to F\to 0
\end{equation}
in $\cP_b((\frac{3}{4}, \frac{7}{4}])$ such that $\phi_b (E) \geq \phi_b (F)$.  Since the phases of objects in $\Pc_b ( (\tfrac{3}{4},\tfrac{7}{4}]) \cap \kernel Z_H$ are defined as limits of $\phi_{V_\omega, D_\omega}$, the weak seesaw property still holds for all objects in $\Pc_b ((\tfrac{3}{4}, \tfrac{7}{4}])$. It follows that $\phi_b(E) \geq \phi_b (Q'[1])=\tfrac{7}{4}$.    By Lemma \ref{lem:AG52-68-1}, we must have $\phi_b(E)=\frac{7}{4}$.

Since $\cP_b$ defines a slicing, there exists a subobject $E'$ of $E$ in $\cP_b((\frac{3}{4}, \frac{7}{4}])$ such that $E'\in \cP_b(\frac{7}{4})$.
We form the short exact sequence in $\cP_b((\frac{3}{4}, \frac{7}{4}])$:
\[
0\to E'\to Q'[1]\to F'\to 0.
\]

Suppose $\phi_b(F')\neq \frac{7}{4}$.  Note that $E' \in \Pc_b(\tfrac{7}{4})$ implies $E'$ is the shift of a semistable object in $\Pc_b (0,1]$ of phase $\tfrac{3}{4}$, and so $E' \notin \kernel Z_H$.  Hence  $F'$ must lie in $\ker(Z_H)\cap \cP_b((\frac{3}{4}, \frac{7}{4}])$ (or else we would have $\phi_b (E') \neq \phi_b (Q'[1])$).  By Lemma \ref{lem:AG52-68-1}, there must be a nonzero surjection $F' \to A$ in $\Pc_b ((\tfrac{3}{4}, \tfrac{7}{4}])$ for some $A \in \Pc_b (t)$ with $t < \tfrac{7}{4}$; this induces a nonzero map $Q'[1] \to A$, contradicting $\Pc_b$ being a slicing.

Hence we must have $\phi_b(F')=\frac{7}{4}$.  Then  either (i) $F'\in \cP_b(\frac{7}{4})$, or (ii) $F'\in \langle \ker{Z_H}\cap \cP_b((\frac{3}{4}, \frac{7}{4}]), \cP_b(\frac{7}{4})\rangle$. 
If we are in case (ii), there is a nonzero map from $Q'[1]$ to $\langle \ker(Z_H)\cap \cP_b((3/4, 7/4])\rangle$. On the other hand, 
\[
\langle \ker(Z_H)\cap \cP_b((3/4, 7/4])\rangle=\langle \cO_X[1], \cO_\Theta(-1)[1]\rangle,
\]
and there is no nonzero map from $Q'[1]$ to $\cO_X[1]$ or $\cO_\Theta(-1)[1]$, hence we must have $F'\in \cP_b(7/4)$.
Then the short exact sequence 
\[
0\to E[-1]\to Q'\to F[-1]\to 0
\]
is a destabilizing short exact sequence in $\cP_b((0, 1])$, which contradicts  $Q'$ being $\sigma_b^L$-stable.

Let $S(t)\subset \cP(t)$ be the subset consisting of objects in $\cP(t)$ which are images of $\sigma_b^L$-stable objects in $\cP_b(t)$. 

Claim: let $A\in S(t)$ for some $\frac{3}{4}<t\leq \frac{7}{4}$, we have 
\[
\Hom(\mathbb{C}(x), A)=0.
\]
 Let $A=\Psi(A')$ where $A'$ is $\sigma_b^L$-stable. If we have a nonzero map $\mathbb{C}(x)\to A$, this corresponds to a nonzero map $f:Q'[1]\rightarrow A'$. Consider the image of $f$ in $\cP_b((\frac{3}{4}, \frac{7}{4}])$. Since $Q'[1]$ is stable in $\cP_b((\frac{3}{4}, \frac{7}{4}])$, we have $\phi_b(\text{im}f)=\frac{7}{4}$. Then by Lemma \ref{lem:AG52-68-1}, there exists a subobject $C$ of $\text{im}f$ such that $C\in \cP_b(\frac{7}{4})$. We form the short exact sequence 
\begin{equation}
    \label{Equ:12.8equ1}
0\to C\to A'\to B\to 0
\end{equation}
in $\cP_b((\frac{3}{4}, \frac{7}{4}])$. If $t<\frac{7}{4}$, then $\phi_b(C)>\phi_b(A')$. Since $C\in \cP_b(\frac{7}{4})$, $A'\in \cP_b(t)$, this contradicts $\cP_b$ is a slicing. If $t=\frac{7}{4}$, then  \eqref{Equ:12.8equ1} is a short exact sequence with $C$ and $A'$ both in $\cP_b(\frac{7}{4})$. Then we have either (i) $B\in \cP_b(\frac{7}{4})$ which implies that $A'$ strictly $\sigma_b^L$-semistable, or (ii) $B\in\langle\cP_b(\frac{7}{4}), \ker(Z_H)\cap \cP_b((\frac{3}{4}, \frac{7}{4}])\rangle$. Case (ii) implies that there is a nonzero map from $A'$ to $\ker(Z_H)\cap \cP_b((\frac{3}{4}, \frac{7}{4}])$. Since the objects in $\ker(Z_H)\cap \cP_b((\frac{3}{4}, \frac{7}{4}])$ are direct sums of $ \cO_X[1]$ and $\cO_\Theta(-1)[1]$, and $\phi_b(\cO_X[1])=1$, $\phi_b(\cO_\Theta(-1)[1])=\frac{3}{2}$, this again contradicts $\cP_b$ being a slicing. 



On the other hand, if $K\in \ker(Z_H)\cap \cP_b(t)$ for $0<t\leq 1$, then $K\simeq \oplus \cO_\Theta(-1)$ or $K\simeq \oplus \cO_X[1]$. By Proposition \ref{Prop:CorrKer}, we know that $\Psi(\cO_\Theta(-1))=L_0$, $\Psi(\cO_X[1])=L_1$.
We have $\Hom(\C(x), L_i)=0$ for $i=0, 1$, and $\Hom(\C(x), L_i[1])=0$ for $i=0, 1$. 
Hence we have 
\[
\Hom(Q'[1], K)=0
\]
for any $K\in \ker(Z_H)\cap \cP_b((0, 1])$, or any $K\in \ker(Z_H)\cap \cP_b((1, 2])$. 
If $t\leq 1$, objects in $\cP(t)$ are iterated extensions of objects in $S(t)$ and objects in $\Psi(\ker(Z_H)\cap \cP_b((0, 1]))$. If $1<t\leq \frac{7}{4}$, objects in $\cP(t)$ are iterated extensions of objects in $S(t)$ and objects in $\Psi(\ker(Z_H)\cap \cP_b((1, 2]))$. Hence we have $\Hom(\C(x), E)=0$, for any $E\in\B'_{H, -2}$.

Combining the arguments in the last two paragraphs, and using  Serre duality, we now have that for any $E \in \Bc'_{H,-2}$ 
	\begin{equation*}
		\Ext^i(E, \C(x))=\Ext^{2-i}(\C(x), E)=0
	\end{equation*}
for all $i\neq 0, 1$. Then by \cite[Proposition 5.4]{BMef}, we have that $E$ is quasi-isomorphic to a complex of locally free sheaves $F^\bullet$ such that $F^i=0$ for $i>0$ or $i<-1$. This also implies that $H^{-1}(F^\bullet)$ is torsion-free.


As a result, we have $\B'_{H, -2}\subset \langle \Coh(X), \Coh(X)[1] \rangle$. Set
\begin{align*}
    \Tc_{\B'}&=\Coh(X)\cap \B'_{H, -2},\\
    \Fc_{\B'}&=\Coh(X)\cap (\B'_{H, -2}[-1]).
\end{align*}
Then we have $\Coh(X)=\langle \T_{\B'}, \F_{\B'}\rangle$, and $\B'_{H, -2}=\langle \F_{\B'}[1], \T_{\B'} \rangle$. We need to show that $\T_{\B'}=\T'_1$ and $\F_{\B'}=\F'_1$. It is enough to show that $\T'_1\subseteq \T_{\B'}$ and $\F'_1\subseteq \F_{\B'}$.  
 
 Note that, since $H^{-1}(F)$ is a torsion-free sheaf for any $F \in \Bc'_{H,-2}$ from above, every torsion sheaf on $X$ must be contained in $\Tc_{\Bc'}$.  

\textbf{Part 2.} In this part, we show that $\T'_1\subseteq\T_{B'}$, and for any $E$ which is $\mu_{\omega'_0, B'_0}$-stable torsion-free sheaf and $\mu_{\omega'_0, B'_0}(E)<0$, $E$ lies in $\F_{B'}$. Let $E\in \Coh(X)$ be a $\mu_{\omega'_0, B'_0}$-stable torsion-free sheaf. Then $E$ fits into a short exact sequence of sheaves
\begin{equation*}
	0\rightarrow T\rightarrow E\rightarrow F\rightarrow 0
\end{equation*}
where $T\in \T_{\B'}$ and $F\in \F_{\B'}$. Note that $T, F$ are both torsion-free, while we have  $\mu_{\omega'_0, B'_0}(T)\geq 0$ and $\mu_{\omega'_0, B'_0}(F)\leq 0$ from Remark \ref{rem:AG52-17-1}. Then  by the $\mu_{\omega'_0, B'_0}$-stability of $E$, either $T$ or $F$ vanishes, i.e.\ either  $E\in \T_{\B'}$ or $E\in \F_{\B'}$.   In particular, if  $\mu_{\omega'_0, B'_0}(E)>0$ (resp.\ $\mu_{\omega'_0, B'_0}(E)<0$), then $E$ must lie in $\T_{\B'}$ (resp.\ $\Fc_{B'}$). 



By Proposition \ref{Prop:CorrKer}, we have $L_1=\Psi(O_X[1])$.  Since $\OO_X[1]$ is $\sigma^L_b$-semistable of phase 1, it follows that 
\ $L_1 \in \Pc (1)$.  Hence  $L_1\in \T_{\B'}$.  We have now shown $\Tc_{\omega'_0, B'_0}$ and $\mathcal{C}_1$ are both contained in $\Tc_{\Bc'}$, and so  $\T'_1\subseteq \T_{\B'}$.

\textbf{Part 3.} Now consider the case where $E$ is $\mu_{\omega'_0, B'_0}$-semistable, $\mu_{\omega'_0, B'_0}(E)=0$ and $E\in \F'_1$. We claim that  $E\in \F_{\B'}$. To show this, we will denote the following as condition $(*)$ for a sheaf $I$:
\[
(*)\text{\quad} \text{$I$ is  $\mu_{\omega'_0, B'_0}$-semistable}, \mu_{\omega'_0, B'_0}(I)=0, I\in\F'_1, \text{ and } I\in\T_{\B'}.
\]
Suppose $E\notin \F_{\B'}$; then we have a short exact sequence in $\Coh(X)$
\begin{equation*}
	0\rightarrow E_T\rightarrow E\rightarrow E_F\rightarrow 0
\end{equation*}
with $0\neq E_T\in \T_{\B'}$ and $E_F\in \F_{\B'}$.
Since $E\in \F'_1$, we have $E_T\in \F'_1$. 
Since $E$ is $\mu_{\omega'_0, B'_0}$-semistable with $\mu_{\omega'_0,B'_0}(E)=0$ while $\mu_{\omega'_0, B'_0}(E_T)\geq 0$ by Remark \ref{rem:AG52-17-1}, we must have  $\mu_{\omega'_0, B'_0}(E_T)=0$ and that $E_T$ is $\mu_{\omega'_0, B'_0}$-semistable. Hence $E_T$ satisfies $(*)$.  

Denote $E_T$ by $E^{(0)}$, and consider any nonzero  map $\gamma^{(0)} : E^{(0)}\rightarrow \C(x^{(0)})$ in $\Coh(X)$, for some $x^{(0)} \in X$. If $\gamma^{(0)}$ is not surjective in the abelian category $\B'_{H,-2}$, then taking its image and cokernel in $\B'_{H,-2}$,  we can consider the short exact sequence in $\B'_{H,-2}$
\begin{equation}\label{eq:igcxcg}
0\to \image \gamma^{(0)} \to \mathbb{C}(x^{(0)}) \to \cokernel \gamma^{(0)} \to 0.
\end{equation}
Note that here $\image \gamma^{(0)}$ is a sheaf.  

Recall from the start of the proof that every skyscraper sheaf $\mathbb{C}(x)$ is of the form $\Psi (Q'[1])$ for some $Z_H$-stable object $Q'[1]$ in $\Pc_b (\tfrac{3}{4}, \tfrac{7}{4}]$ with phase $\tfrac{7}{4}$.  Since $\Bc'_{H,-2}=\Pc (\tfrac{3}{4}, \tfrac{7}{4}]$, the short exact sequence \eqref{eq:igcxcg} in $\Pc (\tfrac{3}{4}, \tfrac{7}{4}]$ corresponds to some short exact sequence in $\Pc_b (\tfrac{3}{4}, \tfrac{7}{4}]$
\[
0 \to M \to Q'[1] \to R \to 0.
\]
The stability of $Q'[1]$ now implies $\phi_b (R)=\tfrac{7}{4}$ and that $\phi_b (M) < \tfrac{7}{4}$.  Filtering $R$ using the slicing $\Pc_b$, we see that $R$ cannot lie in $\kernel Z_H$; it follows that  $M$  lies in $\kernel Z_H$.  It follows that $\image \gamma^{(0)}$ lies in $\Bc'_{H,-2} \cap (\kernel Z_H (\Psi^{-1}(-)))$.  Then  $\gamma^{(0)}$ must  either 


\begin{itemize}
    \item[(i)] be a surjection in $\B'_{H, -2}$, or 
    \item[(ii)] factor through a sheaf $F$ (i.e.\ $\image \gamma^{(0)}$ above) such that 
    \[
    F\in \Bc'_{H,-2} \cap (\kernel Z_H (\Psi^{-1}(-)))
    \]
    in which case $F\in \langle L_1 \rangle$ by Lemma \ref{lem:AG52-17-2}.
\end{itemize}
For any $i \geq 0$, we will set $E^{(i+1)}$ to be the kernel of $\gamma^{(i)}$ in $\Bc'_{H,-2}$, and define $\gamma^{(i+1)}$ to be any nonzero map $E^{(i+1)} \to \mathbb{C}(x^{(i+1)})$ in $\Coh (X)$ for some $x^{(i+1)} \in X$.  By the same argument as above, for each $i \geq 0$, the map $\gamma^{(i)}$ must be in either case (i) or case (ii).

We now show by induction that for each $i \geq 0$, regardless of whether we are in case (i) or case (ii), $E^{(i+1)}$ always satisfies (*).  So assume $E^{(i)}$ satisfies (*) and consider the short exact sequence in $\Bc'_{H,-2}$ 
\[
0 \to E^{(i+1)} \to E^{(i)} \overset{\gamma^{(i)}}{\longrightarrow}  \image \gamma^{(i)} \to 0.
\]
Note that this is also a short exact sequence in $\Coh (X)$.
If $\gamma^{(i)}$ is in case (i), then $\ch_j(E^{(i+1)})=\ch_j(E^{(i)})$ for $j=0,1$; if $\gamma^{(i)}$ is in case (ii), then $Z_H(\Psi^{-1}(\image \gamma^{(i)}))=0$ which implies that 
\[
\omega'_0\cdot\ch_1^{B'_0}(\image \gamma^{(i)})=0.
\]  In either case, we have  $\mu_{\omega'_0, B'_0}(E^{(i+1)})=\mu_{\omega'_0, B'_0}(E^{(i)})=0$, and so  $E^{(i+1)}$ is again a torsion-free sheaf that is $\mu_{\omega'_0, B'_0}$-semistable. Since $\Fc_1'$ is a torsion-free class in  $\Coh (X)$, we also have $E^{(i+1)}$ lies in $\Fc_1'$.  In addition, since $E^{(i+1)}$ is a coherent sheaf that lies in $\Bc'_{H,-2}$, it lies in $\Tc_{\Bc'}$ by definition.  Hence $E^{(i+1)}$ also satisfies (*).

Next, we show that as $i$ increases from $0$, case (ii) can only happen a finite number of times to the $\gamma^{(i)}$ before (i) happens.  
If $\gamma^{(i)}$ is in case (ii), we have $\ch_0(E^{(i+1)})<\ch_0(E^{(i)})$.  
Hence after (ii) happens $m$ times for $m$ large enough, we have $E^{(m)}\in \langle L_1\rangle$.
Since $E^{(i)}$ lies in $\Fc_1'$, we have   $\Hom (L_1,E^{(i)})=0$ which is a contradiction.  

Therefore,  every $\gamma^{(i)}$ is in case (i) for $i \gg 0$, in which case
\[ 
Z_{\omega'_0, B'_0}(E^{(i+1)})=Z_{\omega'_0, B'_0}(E^{(i)})+1.
\]
It then follows that for $i \gg 0$, the image of $E^{(i)}$ under $Z_H \circ \Psi^{-1}$, 
cannot lie in the half plane 
\[
\{ re^{i\phi \pi} : r \in \mathbb{R}_{\geq 0}, \phi \in (\tfrac{3}{4}, \tfrac{7}{4}]\}
\]
which contradicts $E^{(i)} \in \Bc'_{H,-2}=\Pc(\tfrac{3}{4}, \tfrac{7}{4}]$.  This completes the proof that $E \in \Fc_{\Bc'}$.
 

We can now finish the proof of this theorem: $\F_1'\subseteq \F_{\B'}$. For any $E \in \Fc_1'$, we consider its $\mu_{\omega'_0,B'_0}$-HN filtration
\[
 0 \neq E_1 \subsetneq E_2 \subsetneq \cdots \subsetneq E_m=E
\]
 in $\Coh (X)$. Then $E_1\in \Fc_1'$. If $\mu_{\omega'_0,B'_0}(E_1)= 0$, by our argument in Part 3, $E_1$ lies in $\Fc_{\Bc'}$.  Also, any HN factor $E_i/E_{i-1}$ satisfying $\mu_{\omega'_0, B'_0}(E_i/E_{i-1})<0$ can be further filtered by $\mu_{\omega'_0,B'_0}$-stable torsion-free sheaves with $\mu_{\omega'_0,B'_0}<0$, each of which must lie in $\Fc_{\Bc'}$ by the argument in Part 2.  Overall, $E$ has a filtration by objects in $\Fc_{\Bc'}$ and hence itself lies in $\Fc_{\Bc'}$.
	\end{proof}


\begin{lem}\label{lem:AG52-68-1}
For any real numbers $c, d$ satisfying $c\leq d \leq c+1$, if $E \in \Pc_b (c,d)$ then $\phi_b (E) \in (c,d)$.
\end{lem}

\begin{proof}
    Suppose $E \in \Pc_b (c,d)$.  Then there is a filtration of $E$ by exact triangles
    \[
0 = E_0 \to E_1 \to \cdots \to E_m = E
    \]
    such that each mapping cone $C_i := \mathrm{cone}(E_{i-1} \to E_i)$ lies in $\Pc_b (t_i)$ for some $t_i \in (b,c)$, and $c> t_1 > \cdots > t_m > b$.  If $C_i$ does not lie in $\kernel (Z_H)$ for some $1 \leq i \leq m$, then $Z_H (E)\neq 0$ and we have $\phi_b (E) \in (c,d)$.  If $C_i \in \kernel (Z_H)$ for all $i$, then since the phases of objects in $\kernel (Z_H) \cap \Pc_b (c,d)$ are defined as limits of $\phi_{V_\omega, D_\omega}$,   we  still  have $\phi_b (E) \in (c,d)$.
\end{proof}

\begin{lem}\label{lem:AG52-67-1}
Suppose $a \in (0,1]$.  Then for any $t \in (a,a+1]$, every object in $\Pc_b (t)$ is $Z_H$-semistable in $\Pc_b (a,a+1]$.
\end{lem}

\begin{proof}
Suppose $E \in \Pc_b (t)$.  Take any short exact sequence 
  \[
  0 \to E' \to E \to E'' \to 0
  \]
  in $\Pc_b (a,a+1]$.  Since $\Pc_b$ is a slicing, we must have $E' \in \Pc_b (a,t]$ and $E'' \in \Pc_b (t,a+1]$.  By Lemma \ref{lem:AG52-68-1}, we have $\phi (E') \leq t \leq \phi (E'')$.
\end{proof}

\begin{lem}\label{lem:AG52-68-1b}
Every object in $\Pc_b (\tfrac{3}{4}, \tfrac{7}{4}]\cap \kernel(Z_H)$ is a direct sum of copies of $\OO_X[1]$ and $\OO_\Theta (-1)[1]$.
\end{lem}

\begin{proof}
  Suppose $E \in \Pc_b (\tfrac{3}{4}, \tfrac{7}{4}]\cap \kernel(Z_H)$.  Then we can fit $E$ in a short exact sequence in the heart $\Pc_b (a,a+1]$
  \[
  0 \to E' \to E \to E'' \to 0
  \]
  where $E' \in \Pc_b (1,\tfrac{7}{4}]$ and $E'' \in \Pc_b (\tfrac{3}{4},1]$.  Since $Z_H$ is still a weak stability function on $\Pc_b (\tfrac{3}{4},\tfrac{7}{4}]$, we must have $E'[-1] \in \Pc_b(0,\tfrac{3}{4}] \cap \kernel(Z_H)$ and $E'' \in \Pc_b (\tfrac{3}{4},1] \cap \kernel(Z_H)$.  From the definition of a slicing, we know that $E'[-1]$ has a filtration by objects in $\Pc_b (t) \cap \kernel(Z_H)$ where $t \in (0, \tfrac{3}{4}]$, each of which must be semistable.  Hence $E''[-1]$ can only be a direct sum of copies of $\OO_\Theta (-1)$.  Similarly, $E''$ can only be a direct sum of copies of $\OO_X[1]$.  Since we know $\Ext^1 (\OO_X[1], \OO_\Theta (-1)[1])=0$ from  
  \cite[Lemma 5.5]{CLSY1}, the claim follows.
\end{proof}

We define  $$\Psi'=\cO_X(\alpha)\otimes(\Phi^{-1}(\_)[-1])$$ so that we have $\Psi\circ\Psi'=\mathrm{id}_\mathcal{D}[-1] = \Psi'\circ\Psi$.
\begin{thm}
\label{Cor:corrstab1}
	Given $b\in\mathbb{P}^1$, we have 
the line bundle $\cO_X(\alpha)$ is $\sigma^L_b$-stable.

\end{thm}

\begin{proof}
	We first check that we are in Configuration III-w:
 
 (a) We take the autoequivalences $\Psi'$ and $\Psi$, then by definition we have $\Psi'\circ\Psi=\Psi\circ\Psi'=\mathrm{id}_\mathcal{D}[-1]$. 
 
 (b) We take the two abelian categories to be $\B$ and $\B^0_{H, -2}$.
  From Theorem \ref{Thm:CorrHeart1}, we have $\B=\B'_{H, -2}=\Pc ((\tfrac{3}{4},\tfrac{7}{4}])$ where $\Pc$ denotes the slicing of $\Phi \cdot \cP_b$. Hence we have $$\Psi'(\B)=\cP_b(-\tfrac{1}{4}, \tfrac{3}{4}].$$ 
  This implies that $$\Psi'(\B)\subset \langle \B^0_{H, -2}, \B^0_{H, -2}[-1]\rangle.$$

 (c) We take $T=-g^{-1}$.
Then  \eqref{eq:AG52-17-2} implies that we have 
\[
Z_{H}(\Psi'(E))=TZ'_0(E).
\]
Since the action of $g$ on $\mathbb{R}^2$ decreases the phase of an object by $\frac{3}{4}\pi$, the action of $T$ decreases the phase of an object by $\frac{1}{4}\pi$. We define $\Gamma_T:\phi\to \phi-\frac{1}{4}$. Using the notation in Configuration III-w, we can take $a=0$ and $b=0$, hence we also have
\[
0<\Gamma_T^{-1}(0)<1.
\]

Now we check the condition in Proposition \ref{prop:weakstabconcorr-var1}. We know that $\Psi'(\cO_\Theta(-2))=\cO_X(\alpha)$ and $\cO_X(\alpha)\in \T^0_H$. Since $$\Hom(\cO_X(\alpha), \cO_\Theta(m))=H^0(\cO_\Theta(-D_\alpha+m)),$$
	we have $\Hom(\cO_X(\alpha), \cO_\Theta(m))=0$ for $D_\alpha\geq 0$ and $m\leq -2$. Hence $\cO_X(\alpha)\in \B^0_{H, -2}$, and $\cO_\Theta(-2)$ is $\Psi'_{\B^0_{H, -2}}\text{-WIT}_0$. 
 Recall that 
	$$\ker(Z_{H})\cap \B^0_{H, -2}=\langle \cO_\Theta(-1), \cO_X[1]\rangle.$$ Hence there is no short exact sequence in $\B^0_{H, -2}$ 
 \begin{equation}
 \label{Equ:corstab1}
	0\rightarrow M\rightarrow \Psi'(\cO_\Theta(-2))\rightarrow N\rightarrow 0
 \end{equation}
	such that $M\in \ker(Z_{H})\cap \B^0_{H, -2}$.
 
 On the other hand, assume that $N\in \ker(Z_{H})\cap \B^0_{H, -2}$. We have 
 \[
 \Hom(\cO_X(\alpha), \cO_\Theta(-1))\simeq H^0(\cO_\Theta(-D_\alpha-1))=0
 \] 
 for $D_\alpha\geq 0$. Since every object in $\Bc^0_{H,-2} \cap \kernel (Z_H)$ is a direct sum of copies of $\OO_\Theta (-1)$ and $\OO_X[1]$ by  
 \cite[Lemma 5.5]{CLSY1}, it follows that $N\in \langle \cO_X[1]\rangle$.  Note that $\Im Z_H (\OO_X(\alpha))= H\alpha = D_\alpha + e>0$, and so $\phi_{H,-2}(\OO_X(\alpha))<1$.  Note also that $\phi_{H, -2}(\cO_X[1])=1$.  Therefore, by the weak seesaw property, we must have $\phi_{H,-2}(M) < \phi_{H,-2}(N)$. 
By Theorem \ref{Thm:OTheta(-2)stable}, we know that $\cO_\Theta(-2)$ is $\sigma^R_a$-stable for any $a\in \mathbb{R}$. Then the stable case of Proposition \ref{prop:weakstabconcorr-var1} implies that $\cO_X(\alpha)=\Psi'(O_\Theta(-2))$ is $\sigma^L_b$-stable for any $b\in\mathbb{R}$. 
\end{proof}

\begin{thm}
	\label{Thm:goingupV}
	$L:=\cO_X(\alpha)$ is $\sigma_{V, H}$-stable for all $V>0$.
\end{thm}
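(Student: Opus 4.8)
I present this as a plan for proving Theorem \ref{Thm:goingupV}.

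The plan is to upgrade the preceding Corollary, which gives that $L=\OO_X(\alpha)$ is $\sigma_b$-stable for every $b\in\mathbb{P}^1$, into stability along the entire $V$-axis by a going-up argument in the spirit of Lemma \ref{lem:uniqueminiwall}. First I would record that $L$ lies in the heart $\B^0_{H,-2}$ shared by all the weak stability conditions in play: since $H\alpha=D_\alpha+e>0$ the line bundle $L$ is $\mu_H$-stable of positive slope, hence $L\in\T^0_H$, and any nonzero map $L\to\OO_\Theta(i)$ would factor through $L|_\Theta=\OO_\Theta(D_\alpha)$, which admits no nonzero map to $\OO_\Theta(i)$ for $i\leq -2$ because $D_\alpha\geq 0$; thus $L\in\T^0_{H,-2}\subseteq\B^0_{H,-2}$. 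I would also observe that the family $\{\sigma_{V,H}\}_{V>0}$ degenerates, as $V\to0^+$, to the weak stability condition $\sigma_{b_0}$ attached to $b_0=[0:1]$: the central charges satisfy $Z_{V,H}\to Z_H$, the kernel phase $\phi_{V,H}(\OO_\Theta(-1))=\tfrac12$ matches $\phi_{b_0}(\OO_\Theta(-1))$, and $\OO_X[1]$, which has $Z_{V,H}(\OO_X[1])=-V$ and phase $1$ for every $V>0$, re-enters $\kernel Z_H$ with $\phi_{b_0}(\OO_X[1])=1$.

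The structural heart of the argument is a rigidity statement for subobjects. Given a nonzero proper $\B^0_{H,-2}$-subobject $A\subset L$ with quotient $Q$, the long exact cohomology sequence of $0\to A\to L\to Q\to 0$ forces $\Hc^{-1}(A)=0$, so $A$ is a sheaf in $\T^0_{H,-2}$, and it identifies $\kernel(\beta\colon A\to L)$ with $\Hc^{-1}(Q)\in\F^0_{H,-2}=\langle\OO_\Theta(i):i\leq-2\rangle$. Since $L$ is torsion-free, $A$ cannot be a nonzero torsion sheaf, so $\ch_0(A)=\ch_0(\image\beta)=1$; writing $\image\beta=L\otimes I_Z(-C)$ for an effective curve $C$ and using $H\ch_1(\kernel\beta)=0$ (as $\kernel\beta$ is supported on $\Theta$) together with the nefness of $H$, I would obtain
\[
  0<\mu_H(A)=H\ch_1(A)=\mu_H(L)-HC\leq\mu_H(L),
\]
the strict positivity coming from $A\in\T^0_H$ and the final inequality from $HC\geq0$. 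Consequently, setting $g_M(V)=(\ch_2(M)-V\ch_0(M))/(H\ch_1(M))$, the difference $g_A-g_L$ has derivative $-1/\mu_H(A)+1/\mu_H(L)\leq0$ and is therefore non-increasing on $[0,V_1]$ for every $V_1$.

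To finish, I would suppose $L$ failed to be $\sigma_{V_1,H}$-stable for some $V_1>0$. Then by the weak seesaw property some such $A$ satisfies $\phi_{V_1,H}(A)\geq\phi_{V_1,H}(L)$, equivalently $g_A(V_1)\geq g_L(V_1)$, since both $A$ and $L$ lie outside $\kernel Z_{V_1,H}$ (their imaginary parts $H\ch_1$ are positive). By the monotonicity of $g_A-g_L$ this propagates down to $g_A(0)\geq g_L(0)$, and as $A,L\notin\kernel Z_H$ this reads $\phi_{b_0}(A)\geq\phi_{b_0}(L)$, i.e.\ $A$ already destabilizes $L$ at $\sigma_{b_0}$ — contradicting the preceding Corollary. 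Hence $L$ is $\sigma_{V,H}$-stable for all $V>0$. The step I expect to require the most care is the rigidity analysis of $A$ inside the tilted, non-ample heart $\B^0_{H,-2}$, together with the bookkeeping of the weak-stability conventions (the seesaw reduction of $\rho(A)<\rho(Q)$ to $\phi(A)<\phi(L)$, and the continuity of phases as $V\to0^+$ with respect to the kernel assignments), since this is precisely what guarantees that a wall at any $V_1>0$ is inherited by the already-settled limit $\sigma_{b_0}$.
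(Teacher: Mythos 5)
Your overall route is the same as the paper's: view the slope $\rho_{V,H}$ as a function whose numerator is linear in $V$, show that the slope difference between a destabilizing subobject $A$ and $L$ is monotone in $V$ (because $\mu_H(A)\le\mu_H(L)$), and transport any wall at $V_1>0$ down to $V=0$, contradicting the $\sigma_b$-stability of $L$ from the preceding Corollary; the paper's proof is exactly this bookkeeping on the set of $V$ where $(\mu_H(E)-\mu_H(L))V$ exceeds the relevant constant. However, your ``rigidity'' step contains a genuine error: from a short exact sequence $0\to A\to L\to Q\to 0$ in $\B^0_{H,-2}$ you may not conclude that $\Hc^{-1}(Q)$ lies in $\F^0_{H,-2}=\langle\OO_\Theta(i):i\le-2\rangle$. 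The heart $\B^0_{H,-2}$ is a tilt of the already tilted heart $\A^0_H$, so $\Hc^{-1}(Q)$ only lies in the extension closure of $\F^0_H$ and $\langle\OO_\Theta(i):i\le-2\rangle$ (this is precisely the torsion-free class $\F_1$ in the paper's description of $\B_{H,k}$ as a single tilt of $\Coh(X)$); in particular $\ker\beta$ can be a torsion-free sheaf of positive rank with $\mu_{H,\max}\le 0$, so your conclusions $\ch_0(A)=1$ and $\mu_H(A)=\mu_H(L)-HC$ both fail --- this is the standard Arcara--Bertram situation in which destabilizers of a line bundle in a tilted heart can have arbitrary rank.

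Fortunately the error is inessential, because everything you actually use survives in any rank. Since $A$ is a nonzero sheaf and a zero map cannot be monic in $\B^0_{H,-2}$, the sheaf map $\beta\colon A\to L$ is nonzero; its image in the torsion-free $L$ has rank one, so $\ch_0(A)\ge 1$, and since $\Im Z_{V,H}\ge 0$ on the heart we get $0<H\ch_1(A)\le H\alpha$, whence $\mu_H(A)\le H\ch_1(A)\le\mu_H(L)$ --- the one-line inequality the paper simply asserts. Your derivative computation then holds verbatim, since $\tfrac{d}{dV}g_M=-\ch_0(M)/(H\ch_1(M))=-1/\mu_H(M)$ for any $\ch_0(M)>0$, not just in rank one. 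Two smaller points should be tightened. First, you record $\Hom(L,\OO_\Theta(i))=0$ only for $i\le-2$; you also need it for $i=-1$ (it holds, since $L|_\Theta\cong\OO_\Theta(D_\alpha)$ with $D_\alpha\ge 0>-1$), because that is what excludes a destabilizing quotient in $\kernel Z_{V_1,H}=\langle\OO_\Theta(-1)\rangle_{ex}$, the paper's case (ii). Second, at the endpoint you should verify $Q\notin\kernel Z_H$ before invoking the seesaw at $\sigma_{b_0}$: if $Z_H(Q)=0$ with $\ch_0(Q)<0$ then $Z_{V_1,H}(Q)\in\RR_{<0}$, so $Q$ had maximal phase at $V_1$ and could not have been part of a destabilizing sequence there, while $\ch_0(Q)=0$ forces $Q\in\langle\OO_\Theta(-1)\rangle_{ex}$, already excluded. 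With these repairs your argument coincides with the paper's proof.
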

\begin{proof}
	Recall from \cite[Proposition 5.14]{CLSY1}, 
we have for any $V\in \mathbb{R}_{\geq0}$, $\sigma_{V, H}$ have the same heart, and $Z_{V, H}=Z_{H}$ when $V=0$. 
	Consider a weak stability condition $\sigma_{V, H}$.  A short exact sequence in $\B^0_{H, -2}$
	\begin{equation}
		\label{Equ:Thmgoingup1-a}
		0\rightarrow E\rightarrow L\rightarrow F\rightarrow 0
	\end{equation}
	 is a destabilizing short exact sequence for $L$ if $\phi_{V,H}(E)> \phi_{V,H}(F)$.  By the weak seesaw property, this occurs if  either 
	
	(i) $\phi_{V, H}(E)> \phi_{V, H}(L)$, or
	
	(ii) $\phi_{V, H}(E)= \phi_{V, H}(L)>\phi_{V, H}(F)$, in which case $E \notin \kernel Z_{V,H}$ and so $F$ must lie in $\kernel Z_{V,H}$, and so  $F\in \langle\cO_\Theta(-1)\rangle$.
 
	But when $D_\alpha\geq0$, $\Hom(L, \cO_\Theta(-1))=0$, we only need to consider case (i).  Note that if $\Im Z_{V,H}(E)=0$, then since $\Im Z_{V,H}(L)\neq 0$, we have $\phi_b^L(E)> \phi_b^L(L)$, contradicting the $\sigma_b^L$-stability of $L$.  So we must have $\Im Z_{V,H}(E)\neq 0$.
 
 Since $E\notin \ker Z_{V, H}\cap B^0_{H, -2}$, it makes sense to talk about the Bridgeland slope of $E$. Let $\rho_{V, H}=-\frac{\Re(Z_{V, H})}{\Im(Z_{V, H})}$.
	Then $\rho_{V, H}(E)> \rho_{V, H}(L)$ is equivalent to 
	$$-\frac{\Re(Z_{V, H}(E))}{\Im(Z_{V, H}(E))}>-\frac{\Re(Z_{V, H}(L))}{\Im(Z_{V, H}(L))}$$
	which is equivalent to 
	\begin{equation}\label{eq:thm12-15-1}
 (\mu_{H}(E)-\mu_H(L))V> \frac{(H\cdot \ch_1(E))\ch_2(L)-(H\cdot \ch_1(L))\ch_2(E)}{\ch_0(L)\ch_0(E)}.
 \end{equation}
Also note that $L$ is a $\mu_H$-semistable sheaf, and so from the long exact sequence of sheaves of \eqref{Equ:Thmgoingup1-a}, we obtain $Hc_1(E) \leq Hc_1(L)$ while $\ch_0(E) \geq 1 = \ch_0(L)$, giving us  $\mu_H(E)\leq \mu_H(L)$.  As a result, we have   $\rho_{V,H}(E) > \rho_{V,H}(F)$   in  \eqref{Equ:Thmgoingup1-a} if one of the following cases holds:


(i) $\mu_H(E)=\mu_H(L)$ and \eqref{eq:thm12-15-1} holds for all $V\in \mathbb{R}_{\geq 0}$; 
	

 (ii) $\mu_H(E) < \mu_H (L)$ and 
 \[V< \frac{(H\cdot \ch_1(E))\ch_2(L)-(H\cdot \ch_1(L))\ch_2(E)}{\ch_0(L)\ch_0(E)(\mu_H(E)- \mu_H(L))}.
 \]

	Hence if there exists $E$ 
	that destabilizes $L$ at $\sigma_{V, H}$ for some $V >0$, then $E$ also destabilizes $L$ at $\sigma^L_b$ for  
	any $b\in\mathbb{P}^1$, which contradicts $L$ is $\sigma^L_b$-stable for any $b\in\mathbb{P}^1$. 
	

 This shows that $L$ is $\sigma_{V, H}$-semistable for all $V>0$. Assume that $L$ is strictly semistable at $\sigma_{V', H}$ for some $V'>0$. Then there is a short exact sequence as in \eqref{Equ:Thmgoingup1-a} such that 
 \[
 \phi_{V', H}(E)=\phi_{V', H}(L)=\phi_{V', H}(F).
 \]

 Then we have 
 \begin{equation}\label{eq:thm12-15-1'}
 (\mu_{H}(E)-\mu_H(L))V'= \frac{(H\cdot \ch_1(E))\ch_2(L)-(H\cdot \ch_1(L))\ch_2(E)}{\ch_0(L)\ch_0(E)},
 \end{equation}
 which implies that the right hand side of equation \eqref{eq:thm12-15-1'} must be smaller or equal to zero.
 Since $L$ is $\sigma_b^L$-stable for any $b\in \mathbb{P}^1$, we must have $\phi_b^L(E)\leq \phi_b^L(L)$, and so overall we must have
 \[
 \frac{(H\cdot \ch_1(E))\ch_2(L)-(H\cdot \ch_1(L))\ch_2(E)}{\ch_0(L)\ch_0(E)}= 0.
 \]
Then $\phi_b^L(E)=\phi_b^L(L)<\phi_b^L(F)$ for any $b\in \mathbb{P}^1$.  Note that $E$ cannot lie in $\kernel Z_H$ since $\kernel (Z_H) \cap \Bc^0_{H,-2} = \langle \OO_\Theta (-1) \rangle$ while $\Hom (\OO_\Theta (-1),L)=0$.   
So we must have   $F\in \ker(Z_H)\cap\B^0_{H, -2}$ (or else $\phi_b^L(E)<\phi_b^L(L)$, a contradiction). Since $\Hom(L, \cO_\Theta(-1))=0$, we have $F\simeq \oplus \cO_X[1]$. But $\phi_{V, H}(\cO_X[1])=1$ for all $V\geq 0$, contradicting $\phi_{V', H}(L)=\phi_{V', H}(F)$.

Hence $L$ is $\sigma_{V, H}$-stable for all $V>0$.
\end{proof}

Recall that $\Upsilon=\Phi(\cdot)\otimes \cO(f)$. We define 
\[
\Upsilon'=\Phi^{-1}(\_\otimes \cO(-f))[-1].
\]
Then by definition we have $\Upsilon'\circ\Upsilon=\Upsilon\circ\Upsilon'=\mathrm{id}_\mathcal{D}[-1]$.
Let $h\in \text{GL}^+(2, \mathbb{R})$ be defined by 
\begin{equation}\label{eq:matrixhdef}
	h:=
	\begin{pmatrix}
		0 & 1\\ 
		-1 & 0
	\end{pmatrix}
\end{equation}
A quick computation shows 
\begin{equation}
\label{Equ:CentralCharforZ_D}
	Z_{D}(\Upsilon(E))=h\cdot Z_{V, H}(E)
\end{equation}
when $D=V$.
To apply Proposition \ref{prop:weakstabconcorr-var1}, we first explore the relation between $\B^0_{H, -2}$ and $\Coh^{\omega, 0}$, where $\omega=\Theta+(e+V)f$.
\begin{thm}
	\label{Thm:CorrHeart2}
 Given $V\in\mathbb{R}_{>0}$, let $\cP_{V, H}$ denote the slicing for the weak stability condition $\sigma_{V, H}$. 
	Let $\cQ=\Upsilon\cdot\cP_{V, H}$, and denote the heart $\cQ((\frac{1}{2}, \frac{3}{2}])$ by $\B_\Upsilon$, then we have 
$$\B_\Upsilon=\Coh^{\omega, 0},$$
where $\omega=\Theta+(e+V)f$.
\end{thm}
\begin{proof}
	The proof is similar to the proof of Theorem \ref{Thm:CorrHeart1}. We only write down the arguments where the proofs are different. 
	
We have $$\C(x)=\Upsilon(Q[1])$$ where $Q$ is a pure sheaf supported on dimension $1$ with $\ch(Q)=\ch(\cO_f)$. By Proposition \ref{Prop:Ofstableweak} and Remark \ref{rem:Qstableweak}, we have $Q[1]$ lies in $\cP_{V, H}(\frac{3}{2})$ and is $\sigma_{V, H}$-stable. Note that $\ker(Z_{V, H})\cap\cP_{V, H}((\frac{1}{2}, \frac{3}{2}])=\langle\cO_\Theta(-1)[1]\rangle$. Since $\phi_{V, H}(\cO_\Theta(-1))=\frac{1}{2}$, we have objects in $\ker(Z_{V, H})\cap\cP_{V, H}((\frac{1}{2}, \frac{3}{2}])$ always have phase $\phi_{V, H}=\frac{3}{2}$.

Claim: $Q[1]$ is also stable in $\cP_{V, H}((\frac{1}{2}, \frac{3}{2}])$.

By the similar argument as in Theorem \ref{Thm:CorrHeart1}, we have a short exact sequence in $\cP_{V, H}((\frac{1}{2}, \frac{3}{2}])$:
\[
0\to E'\to Q[1]\to F'\to 0,
\]
with $E'\in \cP_{V, H}(\frac{3}{2})$ and $Q[1]\in \cP_{V, H}(\frac{3}{2})$. Since $\phi_{V, H}(\cO_\Theta(-1)[1])=\frac{3}{2}$, we have $F'\in \cP_{V, H}(\frac{3}{2})$. Then the short exact sequence 
\[
0\to E'[-1]\to Q\to F'[-1]\to 0
\]
implies that $Q$ is strictly semistable in $\sigma_{V, H}$, which leads to a contradiction. 

 Let $A$ be an object such that $A=\Upsilon(A')$, where $A'$ is a $\sigma_{V, H}$-stable object in $\cP_{V, H}(t)$. Note that $\ker(Z_{V,H}) \cap P_{V,H}(\frac{1}{2},\frac{3}{2}])=\cP_{V, H}(\frac{3}{2})$, then a similar argument in Theorem \ref{Thm:CorrHeart1} also implies \[
 \Hom(\C(x), A)=0.
 \] 
 Since $\Upsilon(\cO_\Theta(-1))=\cO_X$, we have 
 $\Hom(\C(x), \Upsilon(\cO_\Theta(-1)))=0$, and $\Hom(\C(x), \Upsilon(\cO_\Theta(-1)[1]))=0$.
Let $E$ be an arbitrary object in $\B_\Upsilon$, then $E$ is an extension of objects $\Upsilon(\cO_\Theta(-1))$, $\Upsilon(\cO_\Theta(-1)[1])$ and $A=\Upsilon(A')$ where $A'$ is $\sigma_{V, H}$-stable in $\cP_{V, H}(t)$ for $\frac{1}{2}<t\leq \frac{3}{2}$.
Hence \cite[Proposition 5.4]{BMef} implies that $E$ is quasi-isomorphic to a two-term complex of locally free sheaves $F^\bullet$ such that $F^i=0$ for $i>0$ or $i<-1$. This means we have
	$$\B_\Upsilon\subset\langle \Coh(X), \Coh(X)[1]\rangle.$$
	We denote $\Coh(X)\cap\B_\Upsilon$ by $\T_\Upsilon$, and $\Coh(X)\cap \B_\Upsilon[-1]$ by $\F_\Upsilon$.
	We need to show that $\T_{\omega, 0}\subset \T_\Upsilon$, and $\F_{\omega, 0}\subset \F_\Upsilon$. 
	
	Let $E\in \Coh(X)$ be $\mu_{\omega, 0}$- stable. By the same argument of Parts 1 and 2 of Theorem \ref{Thm:CorrHeart1}, we have if $E$ is a torsion sheaf, then $E\in \T_\Upsilon$. If $\mu_{\omega, 0}(E)>0$, then $E\in \T_\Upsilon$. If $\mu_{\omega, 0}(E)<0$, then $E\in \F_\Upsilon$. 
	
	Finally we consider $E$ is $\mu_{\omega, 0}$-semistable and $\mu_{\omega, 0}(E)=0$. We want to show $E\in\F_\Upsilon$. If not, then there exists a short exact sequence in $\Coh(X)$:
	$$0\rightarrow E_T\rightarrow E\rightarrow E_F\rightarrow 0$$
	where $0\neq E_T\in\T_\Upsilon$ and $E_F\in\F_\Upsilon$.
  Denote $E_T$ by $E^{(0)}$. Similar to the argument in Theorem \ref{Thm:CorrHeart1}, 
	consider any nonzero map $\gamma^{(0)}: E^{(0)}\rightarrow \C(x^{(0)})$ for some $x^{(0)}\in X$. 
 If $\gamma^{(0)}$ is not surjective in the abelian category $\B_\Upsilon$, then taking its image and cokernel in $\B_\Upsilon$,  we can consider the short exact sequence in $\B_\Upsilon$
\begin{equation}\label{eq:igcxcg'}
0\to \image \gamma^{(0)} \to \mathbb{C}(x^{(0)}) \to \cokernel \gamma^{(0)} \to 0.
\end{equation}
  This corresponds to a short exact sequence in $\cP_{V, H}((\frac{1}{2}, \frac{3}{2}])$
  \[
0 \to M \to Q[1] \to R \to 0,
\]
where $\C(x^{(0)})=\Upsilon(Q[1])$ and $Q[1]$ is stable in $\cP_{V, H}((\frac{1}{2}, \frac{3}{2}])$ with $\phi_{V, H}(Q[1])=\frac{3}{2}$. The stability of $Q[1]$ in $\cP_{V, H}((\frac{1}{2}, \frac{3}{2}])$ then implies that $\phi_{V, H}(R)=\frac{3}{2}$. 

Now suppose $M\notin \kernel (Z_{V,H})$.  If $R \in \kernel(Z_{V,H})$, then $\phi_{V,H}(M)=\phi_{V,H}(Q[1])=\tfrac{3}{2}$ making it impossible for $\phi_{V,H}(M) < \phi_{V,H}(R)$ to hold. If $R \notin \kernel(Z_{V,H})$ instead, then we obtain $\phi_{V,H}(M) <  \phi_{V,H}(Q[1])=\tfrac{3}{2} < \phi_{V,H}(R)$ which again is impossible.  Hence we must have $M \in \kernel(Z_{V,H})$.  Since objects in $\ker(Z_{V, H})\cap \cP_{V, H}((\frac{1}{2}, \frac{3}{2}])$ always have phase $\frac{3}{2}$, we have $\phi_{V, H}(M)=\frac{3}{2}$. This contradicts $Q[1]$ is stable in $\cP_{V, H}((\frac{1}{2}, \frac{3}{2}])$.

 Hence we have $E^{(0)}\rightarrow \C(x^{(0)})$ is also a surjection in $\B_\Upsilon$. Recall that for any $i \geq 0$, we set $E^{(i+1)}$ to be the kernel of $\gamma^{(i)}$ in $\B_\Upsilon$, and define $\gamma^{(i+1)}$ to be any nonzero map $E^{(i+1)} \to \mathbb{C}(x^{(i+1)})$ in $\Coh (X)$ for some $x^{(i+1)} \in X$. The same argument as in the last two paragraphs shows that $\gamma^{(i+1)}$ is surjective for all $i \geq 0$.
 Consider the short exact sequences in $\B_\Upsilon$
 \[
 0\to E^{(i+1)}\to E^{(i)}\to \C(x^{(i)})\to 0.
 \]
 
 We have $Z_D(E^{(i+1)})=Z_D(E^{(i)})+1$.
For $i\gg 0$, $Z_{V, H}(\Upsilon^{-1}(E^{(i)}))$ does not lie in the half plane 
\[
\{re^{i\phi\pi}|r\in \mathbb{R}_{\geq 0}, \phi\in(\frac{1}{2}, \frac{3}{2}]\}
\]
which leads to a contradiction. The rest of the argument is the same as Theorem \ref{Thm:CorrHeart1}.
\end{proof}

\begin{cor}
\label{Cor:stableDaxis}
	$\Upsilon(L)$ is $\sigma_D$-stable for all $D>0$.
	\end{cor}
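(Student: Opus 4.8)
The plan is to transfer the $\sigma_{V,H}$-stability of $L$ (Theorem \ref{Thm:goingupV}) across the autoequivalence $\Upsilon$, in exactly the way the $\sigma_b$-stability of $L$ was obtained from the stability of $\cO_\Theta(-2)$ across $\Psi'$. Theorem \ref{Thm:CorrHeart2} identifies the heart $\cP((\tfrac{1}{2},\tfrac{3}{2}])$ of $\Upsilon\cdot\sigma_{V,H}$ with $Coh^{\omega,0}$ for $\omega=\Theta+(e+V)f$, and combined with the intertwining relation $Z_D(\Upsilon E)=h\cdot Z_{V,H}(E)$ (valid for $D=V$) this places us in Configuration III-w with $\Ac=\B^0_{H,-2}$, $Z_\Ac=Z_{V,H}$, $\Bc=Coh^{\omega,0}$, $Z_\Bc=Z_D$, and intertwining element $h$. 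Since $L$ is $\sigma_{V,H}$-stable, hence $Z_{V,H}$-semistable, it then suffices to verify the hypotheses of Proposition \ref{prop:weakstabconcorr-var1} and read off that $\Upsilon L$ is $Z_D$-semistable; letting $D=V$ range over $\mathbb{R}_{>0}$ gives the statement for all $D$.

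To invoke Proposition \ref{prop:weakstabconcorr-var1} I would first record, from the cohomological Fourier--Mukai formulas, that $\Upsilon L=\Phi L\otimes\cO_X(f)$ is a line bundle with $ch_1(\Upsilon L)=-\Theta+(D_\alpha+1)f$, so that $\mu_\omega(\Upsilon L)=D_\alpha+1-D$. Thus $L$ is $\Upsilon_\Bc$-WIT$_0$ for $D<D_\alpha+1$ (then $\Upsilon L\in Coh^{\omega,0}$) and $\Upsilon_\Bc$-WIT$_1$ for $D\ge D_\alpha+1$ (then $\Upsilon L[1]\in Coh^{\omega,0}$). In either regime the relevant vanishing condition is vacuous: by Proposition \ref{Prop:ConstHeartD} we have $\ker Z_D\cap Coh^{\omega,0}=\langle\cO_X[1]\rangle_{ex}$, and $\Upsilon L$ admits neither a $\Bc$-subobject in $\langle\cO_X[1]\rangle_{ex}$ nor, after the shift, a $\Bc$-quotient in $\langle\cO_X[1]\rangle_{ex}$, because $\Hom(\cO_X[1],\Upsilon L)=\Ext^{-1}(\cO_X,\Upsilon L)=0$ and $\Hom(\Upsilon L,\cO_X)=H^0(X,\Upsilon L^\vee)=0$; the latter holds since $ch_1(\Upsilon L^\vee)=\Theta-(D_\alpha+1)f$ meets the irreducible curve $\Theta$ negatively and so is not effective. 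Condition (a) (resp.\ (b)) of Proposition \ref{prop:weakstabconcorr-var1} therefore holds, and the proposition gives the $\sigma_D$-semistability of $\Upsilon L$.

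The main obstacle is this WIT dichotomy: because the heart $Coh^{\omega,0}$ varies with $D$ through $\omega=\Theta+(D+e)f$, the fixed line bundle $\Upsilon L$ crosses from the torsion to the torsion-free part of the tilt at $D=D_\alpha+1$, so $L$ switches from $\Upsilon_\Bc$-WIT$_0$ to $\Upsilon_\Bc$-WIT$_1$ and both halves of Proposition \ref{prop:weakstabconcorr-var1} must be invoked. A cleaner alternative that I would try to make rigorous is to bypass the dichotomy altogether: the heart identification of Theorem \ref{Thm:CorrHeart2}, the central-charge relation through $h$, and the matching of the limiting phases on the kernels (using $\Upsilon(i_*\cO_\Theta(-1))=\cO_X$, the analogue for $\Upsilon$ of Proposition \ref{Prop:CorrKer}) together exhibit $\Upsilon\cdot\sigma_{V,H}$ and $\sigma_D$ as the same weak stability condition up to the element of $\widetilde{\mathrm{GL}}^+(2,\mathbb{R})$ lifting $h$. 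Since that action preserves the set of semistable objects, the $\sigma_{V,H}$-semistability of $L$ would transfer verbatim to $\sigma_D$-semistability of $\Upsilon L$ in the slicing, with no reference to the heart in which $\Upsilon L$ lives; here the only delicate point is checking the compatibility of the limiting phases on the two kernels.
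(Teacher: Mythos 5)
Your proposal follows the paper's proof of this corollary essentially step for step: fix $D>0$, set $V=D$, use Theorem \ref{Thm:CorrHeart2} and the intertwining element $h$ to place $\Upsilon$ in Configuration III-w, split into the $\Upsilon$-WIT$_0$ and $\Upsilon$-WIT$_1$ cases according to the sign of $D_\alpha+1-V$, and conclude via Proposition \ref{prop:weakstabconcorr-var1}, with condition (a) checked by the same vacuity observation. The only (harmless) deviation is in condition (b): the paper notes that $\phi_D(M)\leq\phi_D(N)$ holds automatically because $\phi_D(\cO_X[1])=1$ is maximal, whereas you show the relevant quotient cannot exist at all since $\Hom(\Upsilon(L),\cO_X)=0$ (as $\Theta-(D_\alpha+1)f$ is not effective); both verifications are valid, yours being marginally stronger.
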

\begin{proof}
Given $D\in\mathbb{R}_{>0}$, let $V=D$.
	We first check that we are in Configuration III-w:

 (a) We take the autoequivalences $\Upsilon$ and $\Upsilon'$, then by definition we have $\Upsilon\circ\Upsilon'=\Upsilon'\circ\Upsilon=\mathrm{id}_\mathcal{D}[-1]$.
 
(b) We take the two abelian categories to be $\B^0_{H, -2}$ and $\Coh^{\omega, 0}$, where $\omega=\Theta+(V+e)f$. Denote the slicing of $\sigma_{V, H}$ by $\cP_{V, H}$, we have
	\[
 \B^0_{H, -2}=\cP_{V, H}(0,1])\subset\Big\langle\cP_{V, H}((\frac{1}{2}, \frac{3}{2}]), \cP_{V, H}((\frac{1}{2}, \frac{3}{2}])[-1]\Big\rangle
 \]
	Since $\B_\Upsilon=\Coh^{\omega, 0}$, we have 
 \[
 \Upsilon(\cP_{V, H}(0,1])\subset \langle \Coh^{\omega, 0}, \Coh^{\omega, 0}[-1]\rangle.
 \]

 (c) We take $T=h$ where $h$ is as in \eqref{eq:matrixhdef}, then  \eqref{Equ:CentralCharforZ_D} implies that
\[
Z_{D}(\Upsilon(E))=T\cdot Z_{V, H}(E).
\]
	Since the action of $T$ decreases the phase of objects by $\pi/2$, we define 
 \[
 \Gamma_T:\phi\to\phi-\frac{1}{2}.
 \]
Using the notation in Configuration III-w, we take $a=0$, $b=0$, then we have 
\[
0<\Gamma_T^{-1}(0)<1.
\]
Now we check the conditions in Proposition \ref{prop:weakstabconcorr-var1}.
	 
By the computation in Section \ref{para:spcase1}, if $D_\alpha+1-V>0$, then $L$ is $\Upsilon_{\Coh^{\omega, 0}}\text{-WIT}_0$. 
 Since $\ker(Z_D)\cap \Coh^{\omega, 0}=\langle\cO_X[1]\rangle$ and $\Upsilon(L)$ is a line bundle from \ref{para:spcase1}, There is no short exact sequence in $\Coh^{\omega, 0}$
	$$0\rightarrow M\rightarrow \Upsilon(L)\rightarrow N\rightarrow 0$$
	with $M\in \ker(Z_D)\cap \Coh^{\omega, 0}$. We denote the phase of $\sigma_D$ by $\phi_D$. If $N\in \ker(Z_D)\cap \Coh^{\omega, 0}$, we have $\phi_D(N)=1$; since $\Im Z_D (\Upsilon (L))=D_\alpha + 1-V>0$, it follows that $\phi_D (\Upsilon(L))<1$, and so by the weak seesaw property we have  $\phi_D(M) < \phi_D(N)$.
	
	If $D_\alpha+1-V\leq0$, then $L$ is $\Upsilon_{\Coh^{\omega, 0}}\text{-WIT}_1$. 
	Assume we have a short exact sequence in $\Coh^{\omega, 0}$
	$$0\rightarrow M\rightarrow \Upsilon(L)[1]\rightarrow N\rightarrow 0.$$
	Since $\ch_1(\Upsilon(L))=-\Theta+(D_\alpha+1)f$, any morphism of sheaves $M[-1] \to \Upsilon (L)$ must vanish on the generic fiber of the elliptic fibration (by considering fiber degree).  Since $\Upsilon (L)$ is  a line bundle, this forces  $\Hom(M, \Upsilon(L))=0$, hence $M\notin \ker(Z_D)\cap \Coh^{\omega, 0}$. Now suppose $N\in \ker(Z_D)\cap \Coh^{\omega, 0}$.  In this case, we have  $\phi_D(N)=\phi_D(\cO_X[1])=1$.  If $D_\alpha + 1 -V <0$ then $\Im Z_D (\Upsilon (L)[1])>0$ and so, by the weak seesaw property as in the previous paragraph, we have $\phi_D (M) < \phi_D (N)$.  So suppose $D_\alpha+1-V=0$ from now on.  Since $N$ is a direct sum of copies of $\OO_X[1]$,  the morphism $\Upsilon(L)[1] \to N$ induces a nonzero  morphism of sheaves $\Upsilon (L) \to \OO_X$ that must be injective (since $\Upsilon (L)$ is a line bundle), and so its cokernel is a torsion sheaf with $c_1 = \Theta - (D_\alpha + 1)f$ which is not effective, a contradiction.

 Since $L$ is $\sigma_{V, H}$-stable for all $V>0$, the stable case of Proposition \ref{prop:weakstabconcorr-var1} implies that $\Upsilon(L)$ is $\sigma_D$-stable for all $D>0$.
	\end{proof}
 
Consider the central charge given by
	$$Z_{V, D}(E)=-\ch_2(E)+V\ch_0(E)+i(\Theta+(D+e)f)\cdot \ch_1(E).$$
	Denote $\Theta+(D+e)f$ by $\omega$, and let $\sigma_{V, D}=(Z_{V, D}, \Coh^{\omega, 0})$.
\begin{thm}
	\label{Thm:goingupD}
	$\Upsilon(L)$ is $\sigma_{V, D}$-stable for all $D>0$, $V>0$.
\end{thm}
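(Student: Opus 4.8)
The plan is to run the going-up lemma one last time, now along the vertical ray $\{\sigma_{V,D}\}_{V>0}$ with $D>0$ fixed, anchoring it at the weak stability condition $\sigma_D$ sitting at its foot $V=0$. Since $fc_1(L)=1$, Lemma \ref{lem:lbfdonetr} shows $\Phi L$ is a line bundle, hence so is $\Upsilon L=\Phi L\otimes \cO_X(f)$; a direct computation with the cohomological Fourier--Mukai formulas of \ref{para:spcase1} gives $c_1(\Upsilon L)=-\Theta+(D_\alpha+1)f$, so that
\[
 \mu_\omega(\Upsilon L)=\omega\, c_1(\Upsilon L)=D_\alpha+1-D,\qquad \omega=\Theta+(D+e)f .
\]
Being a line bundle, $\Upsilon L$ is $\mu_\omega$-stable, hence $\omega$-Gieseker stable for every ample $\omega$, and $\omega$ is ample for $D>0$ by Lemma \ref{lem:AG50-132-1}. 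Note also that $\sigma_{V,D}$ and $\sigma_D$ share the heart $\Coh^{\omega,0}$ (Proposition \ref{Prop:ConstHeartD}), so a subobject for one is a subobject for the other.

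First I would treat the range $0<D<D_\alpha+1$, where $\mu_\omega(\Upsilon L)>0$ and $\Upsilon L$ sits in $\Coh^{\omega,0}$ in degree $0$. Here the argument is a verbatim analogue of Theorem \ref{Thm:goingupV}: apply the going-up lemma, Lemma \ref{lem:uniqueminiwall}, with $H=\omega$, $B=0$ and $t=V$. It produces an outermost mini-wall $V_0\geq 0$ such that $\Upsilon L$ is $\sigma_{V,D}$-stable for $V>V_0$ and unstable for $0<V<V_0$, with any destabilizing subsheaf $A$ satisfying $\mu_\omega(A)<\mu_\omega(\Upsilon L)$ strictly. Because the slope functions $V\mapsto \rho_{V,D}(A),\,\rho_{V,D}(\Upsilon L)$ are linear in $V$ with $\frac{d}{dV}\rho_{V,D}(A)<\frac{d}{dV}\rho_{V,D}(\Upsilon L)<0$, a strict destabilization at some $V\in(0,V_0)$ persists strictly down to $V=0$, giving $\rho_D(A)>\rho_D(\Upsilon L)$ at the weak stability condition $\sigma_D$. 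This contradicts the $\sigma_D$-semistability of $\Upsilon L$ from Corollary \ref{Cor:stableDaxis}. Hence $V_0=0$ and $\Upsilon L$ is $\sigma_{V,D}$-stable for all $V>0$.

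For the complementary range $D>D_\alpha+1$ we have $\mu_\omega(\Upsilon L)<0$, so $\Upsilon L$ lies in $\Coh^{\omega,0}$ in degree $1$ and Lemma \ref{lem:uniqueminiwall} does not apply to it directly. The plan is to pass to the dual line bundle $(\Upsilon L)^\vee$, which has $\mu_\omega=D-D_\alpha-1>0$ and hence sits in degree $0$. For $V>0$ the condition $\sigma_{V,D}$ is genuine, and coincides (up to a diagonal $\mathrm{GL}^+(2,\mathbb{R})$-rescaling of the imaginary part) with $\sigma_{\omega'}$ for the ample class $\omega'$ with $(D_{\omega'},V_{\omega'})=(D,V)$, so Lemma \ref{lem:ArcMil-1} yields that $\Upsilon L$ is $\sigma_{V,D}$-stable $\Longleftrightarrow (\Upsilon L)^\vee$ is $\sigma_{V,D}$-stable. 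I would then repeat the degree-$0$ argument above for $(\Upsilon L)^\vee$, the anchor at $V=0$ now being the $\sigma_D$-semistability of $(\Upsilon L)^\vee$. This last input is a weak-stability counterpart of Lemma \ref{lem:ArcMil-1}: the anti-autoequivalence $(-)^\vee[1]$ sends $Z_D$ to $-\overline{Z_D}$ and carries $\Upsilon L$ (in degree $1$) to $(\Upsilon L)^\vee$ (in degree $0$), so it should transport the $\sigma_D$-semistability of $\Upsilon L$ from Corollary \ref{Cor:stableDaxis} to that of $(\Upsilon L)^\vee$. The single boundary line $D=D_\alpha+1$, where $\Im Z_{V,D}(\Upsilon L)=0$, I would recover by a limiting/openness argument from the two open regions.

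The main obstacle is exactly this degree-$1$ range. Unlike in Theorem \ref{Thm:goingupV}, the object $\Upsilon L$ does not stay a degree-$0$ sheaf across the whole quadrant, so I cannot feed it into Lemma \ref{lem:uniqueminiwall}, and the clean monotonicity ``$\mu_\omega(A)<\mu_\omega(\Upsilon L)$ forces persistence down to $V=0$'' must instead be run for $(\Upsilon L)^\vee$. The one ingredient not yet in place is a weak-stability version of the duality Lemma \ref{lem:ArcMil-1} valid at $\sigma_D$, used to transport the anchor across the shift; verifying that $(-)^\vee[1]$ preserves $\sigma_D$-semistability---equivalently, that it respects the heart $\Coh^{\omega,0}$ up to the reflection $\phi\mapsto 1-\phi$---is the step I expect to require the most care.
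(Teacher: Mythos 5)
Your first half (the range $0<D<D_\alpha+1$, where $\Upsilon L$ is a sheaf sitting in degree $0$ of $\Coh^{\omega,0}$) is sound and is essentially the paper's argument: anchor at $\sigma_D$ via Corollary \ref{Cor:stableDaxis} and use linearity of the slope in $V$ to rule out walls on the vertical ray. The genuine gap is in your treatment of the complementary range, and it is twofold. First, your entire case $D>D_\alpha+1$ rests on a weak-stability analogue of Lemma \ref{lem:ArcMil-1} at the degenerate condition $\sigma_D$, which you yourself flag as unproven; nothing in the paper supplies it, and proving it is nontrivial because $(-)^\vee[1]$ must be shown to interact correctly with the heart $\Coh^{\omega,0}$ \emph{and} with the kernel $\ker Z_D\cap\Coh^{\omega,0}=\langle\cO_X[1]\rangle_{ex}$, whose objects carry the assigned phase $\phi_D(\cO_X[1])=1$ rather than a phase computed from the central charge. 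Second, your handling of the line $D=D_\alpha+1$ by a ``limiting/openness argument'' does not work as stated: on that line $\Upsilon L[1]$ has $\Im Z_{V,D}(\Upsilon L[1])=0$, i.e.\ phase $1$, and stability is an open but not closed condition, so limits from the two adjacent open regions yield at best semistability there.

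The dualization detour is in fact unnecessary, and this is where the paper's proof is simpler than yours. Corollary \ref{Cor:stableDaxis} was proved for \emph{both} WIT cases --- in the WIT$_1$ regime it is established via condition (b) of Proposition \ref{prop:weakstabconcorr-var1} applied to $\Upsilon(L)[1]$ --- so the anchor at $V=0$ is already available for the shifted object itself. The paper then works directly with a short exact sequence $0\to E\to \Upsilon(L)[1]\to F\to 0$ in $\Coh^{\omega,0}$: the destabilizing condition $\rho_{V,D}(E)>\rho_{V,D}(\Upsilon(L)[1])$ is linear in $V$ with leading coefficient
\[
\omega\ch_1(E)\,\ch_0(\Upsilon(L)[1])-\omega\ch_1(\Upsilon(L)[1])\,\ch_0(E)\ \leq\ 0,
\]
since $\ch_0(\Upsilon(L)[1])=-1$, $\ch_0(E)\geq 0$, and $\omega\ch_1\geq 0$ on the heart. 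Hence any destabilizer at some $V>0$ destabilizes either for all $V\geq 0$ or on an interval $V<V_0$ containing $0$; in both cases it destabilizes at $V=0$, i.e.\ at $\sigma_D$, contradicting Corollary \ref{Cor:stableDaxis}. This covers the whole range $D\geq D_\alpha+1$, boundary line included, with no duality lemma and no separate limiting argument. To repair your proposal you would either have to prove the weak-stability duality at $\sigma_D$ (including its compatibility with the assigned kernel phases) and replace the openness argument on $D=D_\alpha+1$ by a direct check, or simply switch to the shifted-object argument above.
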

\begin{proof}
Fix $D\in \mathbb{R}_{>0}$. Let $\omega=\Theta+(D+e)f$.
	If $L$ is $\Upsilon_{\Coh^{\omega, 0}}\text{-WIT}_0$, then $\Upsilon (L)$ is $\sigma_D$-stable for all $D>0$ by Corollary \ref{Cor:stableDaxis}, and the argument of Theorem \ref{Thm:goingupV} shows the $\sigma_{V,D}$-stability of $\Upsilon(L)$ for all $V>0$ with only two minor differences:
  (1) we don't actually get case (ii) 
 for the short exact sequence \eqref{Equ:Thmgoingup1-a} since $\sigma_{V,D}$'s are Bridgeland stability conditions (and so $Z_{V,D}$ has zero kernel). (2) since $L$ is $\Upsilon_{\Coh^{\omega, 0}}\text{-WIT}_0$, we have $\Upsilon(L)\in \Coh^{\omega, 0}$. Then $\Im (Z_{V, D}(\Upsilon(L)))>0$, hence we still get the contradiction in last part of the proof of Theorem \ref{Thm:goingupV}.

 We consider the case when $L$ is $\Upsilon_{\Coh^{\omega, 0}}\text{-WIT}_1$. 
	
	Note that we have $Z_{V, D}=Z_D$ when $V=0$, and the heart for $\sigma_{V, D}$ is $\Coh^{\omega, 0}$ for any $V\geq 0$.
	Consider $\sigma_{V, D}$ for some $V>0$. Let $\rho_{V, D}=-\frac{\Re(Z_{V, D})}{\Im(Z_{V, D})}$. Then a short exact sequence in $\Coh^{\omega, 0}$
	\begin{equation}
		\label{Equ:Thmgoingup1}
		0\rightarrow E\rightarrow \Upsilon(L)[1]\rightarrow F\rightarrow 0
	\end{equation}
	 is a destabilizing short exact sequence for $\Upsilon(L)[1]$ if $\rho_{V, D}(E)> \rho_{V, D}(\Upsilon(L)[1])$. Note that by the same argument in Theorem \ref{Thm:goingupV}, we have $\Im(Z_{V, D}(E))\neq 0$. Then the inequality $\rho_{V, D}(E)> \rho_{V, D}(\Upsilon(L)[1])$ is equivalent to 
	\begin{align}
	    (\omega\cdot \ch_1(E)\ch_0(\Upsilon(L)[1])-\omega\cdot \ch_1(\Upsilon(L)[1])&\ch_0(E))V \notag\\
 &> \omega\cdot \ch_1(E)\ch_2(\Upsilon(L)[1])-\omega\cdot \ch_1(\Upsilon(L)[1])\ch_2(E). \label{eq:Thm12d8d1-1}
 \end{align}
 Note that $F \in \Coh (X)[1]$ and $F [-1]$ is a torsion-free sheaf.  Since $\ch_0(\Upsilon(L)[1])=-1$, we have $\ch_0(E)=\ch_0(\Upsilon(L)[1])-\ch_0(F)\geq 0$. Then $\rho_{V, D}(E)>\rho_{V, D}(\Upsilon(L)[1])$ if either

(i) $\omega\cdot \ch_1(E)\ch_0(\Upsilon(L)[1])-\omega\cdot \ch_1(\Upsilon(L)[1])\ch_0(E)=0$, in which case \eqref{eq:Thm12d8d1-1} holds for all $V\in \mathbb{R}_{\geq 0}$, or


(ii)  $\omega\cdot \ch_1(E)\ch_0(\Upsilon(L)[1])-\omega\cdot \ch_1(\Upsilon(L)[1])\ch_0(E)<0$, in which case \eqref{eq:Thm12d8d1-1} is in turn equivalent to  $$V< \frac{\omega\cdot \ch_1(E)\ch_2(\Upsilon(L)[1])-\omega\cdot \ch_1(\Upsilon(L)[1])\ch_2(E)}{\omega\cdot \ch_1(E)\ch_0(\Upsilon(L)[1])-\omega\cdot \ch_1(\Upsilon(L)[1])\ch_0(E))}.$$

Thus if there exists $E$ 
destabilizing $\Upsilon(L)[1]$ at $\sigma_{V, D}$, then $E$ destabilizes $\Upsilon(L)[1]$ at $\sigma_D$, which contradicts Corollary \ref{Cor:stableDaxis}. 

Hence we have $\Upsilon(L)$ is $\sigma_{V, D}$-semistable for all $V$. Assume that $\Upsilon(L)$ is strictly $\sigma_{V', D}$-semistable for some $V'>0$. Then there is a short exact sequence as in \eqref{Equ:Thmgoingup1} such that $\phi_{V', D}(E)=\phi_{V', D}(\Upsilon(L)[1])=\phi_{V', D}(F)$. By the same argument in Theorem \ref{Thm:goingupV}, we have $\phi_D(E)=\phi_D(\Upsilon(L)[1])$ and $F\in \ker(Z_D)\cap \Coh^{\omega, 0}$. But since $\ker(Z_D)\cap \Coh^{\omega, 0}=\langle\cO_X[1]\rangle$ and $\Hom(\Upsilon(L), \cO_X)=0$, such a short exact sequence cannot exist.

Hence $\Upsilon(L)$ is $\sigma_{V, D}$-stable for all $V>0$.
	\end{proof}

\begin{thm}
The line bundle $L=\cO_X(\alpha)$ is
$\sigma_{\omega, 0}$-stable for all ample $\omega\in\langle\Theta, f\rangle$. 
\end{thm}
\begin{proof}
By \eqref{eq:AG52-21-2}, we have 
\[
\Phi\cdot\sigma_{\omega, f}\cdot \Tilde{g}=\sigma_{\omega', 0},
\]
where $V_\omega=D_{\omega'}$ and $D_\omega=V_{\omega'}$. This implies that
\[
\Upsilon\cdot\sigma_{\omega, 0}\cdot \Tilde{g}=\sigma_{\omega', 0}.
\]
Then the result follows from Theorem \ref{Thm:goingupD}.
\end{proof}

For the rest of this section, we prove Theorem 1.4.
Let $F$ be a torsion free coherent sheaf on $X$. We denote \begin{equation}\label{eq:chE}
  \ch_0(F)=n, \text{\quad} f\ch_1(F)=d, \text{\quad}\Theta \ch_1(F)=c, \text{\quad} \ch_2(F)=s,
\end{equation}
then 
\begin{align*}
    \ch_0 (\Phi F) &= d \\
    \ch_1 (\Phi F) &\equiv -c_1(F) + (d-n)\Theta +(c+\tfrac{e}{2}d+s)f \\
    \ch_2(\Phi F) &= -(c+ed-\tfrac{e}{2}n).
 \end{align*}
 Recall $\Psi=\Phi(\_\otimes\cO_X(-\alpha))$. To emphasize the dependence of $\Psi$ on $\alpha$, we denote this exact functor by $\Psi_\alpha$ for the rest of this paper.
 Then we have the following result generalizing Theorem \ref{Thm:OTheta(-2)stable}:

\begin{thm}
\label{thm:torsionstablestrong}
Let $L$ be a line bundle such that $d=1$, $-c+s=1$ and $c>-2$. 
then $\Psi_\alpha(L)$ is $\sigma_a^R$-stable for all $a\in \mathbb{P}^1$. 
\end{thm}
\begin{proof}
Note that when $d=1$, $L$ is $\Psi\text{-WIT}_1$ and $\Phi\text{-WIT}_0$.
Furthermore, taking $e=2$ we have the following Chern characters:
\begin{align*}
    \ch_0 (\Psi_\alpha L) &= 0 \\
    \ch_1 (\Psi_\alpha L) &\equiv -c_1(L) +(1+s)f \\
    \ch_2(\Psi_\alpha L) &= -(c-D_\alpha-1).
 \end{align*}
Then $\Psi_\alpha(L)[1]$ is a torsion sheaf, we denote it by $P$. 
 The condition $c>-2$ implies that $L\in \B^0_{H, -2}$. The condition $-c+s=1$ implies that $\Im(Z'_0(P))=\frac{1}{2}=C_{\omega'_0}$. These two conditions also imply $\phi_{\sigma_a}(P)>\frac{1}{4}$.
The argument of Theorem \ref{Thm:OTheta(-2)stable} carries over except for the following statement in Case 1 (ii): 
assuming there exists a s.e.s in $\B$
\[
0\rightarrow E\rightarrow P\rightarrow Q\rightarrow 0,
\]
if $H^{-1}(Q)\in\ker(Z'_0)$, then $H^0(Q)\neq 0$.

We prove this statement directly. 
Assuming $H^0(Q)=0$, then we have the following s.e.s. in $\Coh(X)$:
\begin{equation}
\label{Equ:H0Q=0}
0\rightarrow L_0^{\oplus m}\rightarrow E\rightarrow P\rightarrow 0.
\end{equation}
 Since $\Hom(L_0, L_1)=0$, we have $E\notin \B_{\ker(Z'_0)}:=\ker(Z'_0)\cap\B$. Assuming there exists $E_i$ in the HN filtration of $E$ such that $E_i\notin \B_{\ker(Z'_0)}$, then we form the following diagram of s.e.s. in $\Coh(X)$:
 	\begin{equation}
  \label{Equ:diag}
			\begin{tikzcd}
				& 0\ar{d}& 0\ar{d} & 0\ar{d} &\\
				0\ar{r}& A	\ar{r}\ar{d}& E_i\ar{r}\ar{d} &P' \ar{r}\ar{d} & 0 \\
				0\ar{r} & L_0^{m}\ar{r}\ar{d}&E \ar{r} \ar{d}&P\ar{r}\ar{d} &0\\
				0\ar{r}&S\ar{r}\ar{d} & E/E_i \ar{r}\ar{d} & T\ar{r}\ar{d}& 0\\
				&0 & 0&0 &
			\end{tikzcd}
		\end{equation} 
where $P'$ is the image of $E_i\rightarrow P$. 
Since $P$ is pure sheaf supported in dimension $1$, $P'$ is also a pure sheaf supported in dimension $1$. Since $\omega'_0$ is ample, we have $0<\Im Z'_0(P')\leq \Im Z'_0(P)=\frac{1}{2}$, hence $\Im Z'_0(P')=\frac{1}{2}$. Furthermore, since $E_i\notin \B_{\ker(Z'_0)}$, we must have $\Im Z'_0(E_i)>0$, which implies that $\Im Z'_0(E_i)\geq\frac{1}{2}$. We have two cases:

{\bf Case(i)}  $\Im Z'_0(E_i)>\frac{1}{2}$. Then the s.e.s. in the first row implies that $\Im Z'_0(A)>0$, hence $\mu_{\omega'_0, B'_0}(A)>0$. Since $\mu_{\omega'_0, B'_0}(L_0)=0$, $A$ destabilizes $L_0$ contradicting that $L_0$ is $\mu_{\omega'_0, B'_0}$-stable. 

{\bf Case(ii)}  $\Im Z'_0(E_i)=\frac{1}{2}$. This implies that $\Im Z'_0(E/E_i)=0$, then $E/E_i\simeq L_1^{\oplus n}$ for some $n$. Since $\Hom(L_0, L_1)=0$, we have $S\simeq 0$ and $E/E_i\simeq T$. But $T$ is a torsion sheaf, again we obtain a contradiction. 

The above discussion forces such an $E_i$ does not exists. Then $E$ fits in a s.e.s.
\[
0\rightarrow L_1^n\rightarrow E\rightarrow J\rightarrow 0
\]
where $J$ is the last HN factor of $E$. Then $E\notin \B_{\ker Z'_0}$ implies that $J\notin \B_{\ker Z'_0}$. Take the JH filtration of $J$, we obtain a surjection $E\rightarrow W$ where $W$ is $\mu_{\omega'_0, B'_0}$-stable. Then we obtain the same diagram as \ref {Equ:diag} by replacing $E/E_i$ by $W$. Furthermore $W\not\simeq L_1$, since otherwise we obtain the same contradiction as in Case (ii). Then $\mu_{\omega'_0, B'_0}(W)>0$. Since $\Im Z'_0(P')=\Im Z'_0(P)=\frac{1}{2}$, we have $\Im Z'_0(T)=0$. Then $\Im Z'_0(S)=\Im Z'_0(W)>0$. Since $T$ is torsion, we have $\ch_0(S)=\ch_0(W)$, which implies that $\mu_{\omega'_0, B'_0}(S)=\mu_{\omega'_0, B'_0}(W)$, contradicting $W$ is $\mu_{\omega'_0, B'_0}$-stable. 
Hence the s.e.s. \ref{Equ:H0Q=0} cannot exist.
\end{proof}

\begin{thm}
\label{thm:mainstrong}
Let $L$ be a line bundle such that $d=1$, $-c+s=1$ and $c>-1$. 
Then $L$ is $\sigma_{\omega, 0}$-stable for all $\omega\in\langle\Theta, f\rangle$.
\end{thm}
\begin{proof}

We check the proof of Theorem \ref{Cor:corrstab1}, Theorem \ref{Thm:goingupV}, Corollary \ref{Cor:stableDaxis} and Theorem \ref{Thm:goingupD} goes through under the above assumptions. 

For Theorem \ref{Cor:corrstab1}, we only need to check the condition in Proposition 9.9. Denoting the torsion sheaf $\Psi(L)[1]$ by $P$, then $\Psi'(P)=L$. Since every object in $\Bc^0_{H,-2} \cap \kernel (Z_H)$ is a direct sum of copies of $\OO_\Theta (-1)$ and $\OO_X[1]$ by  
 \cite[Lemma 5.5]{CLSY1}, there is no short exact sequence in $\B^0_{H, -2}$ 
 \begin{equation}
 \label{Equ:corstab1}
	0\rightarrow M\rightarrow \Psi'(P)\rightarrow N\rightarrow 0
 \end{equation}
	such that $M\in \ker(Z_{H})\cap \B^0_{H, -2}$.
 Assume that $N\in \ker(Z_{H})\cap \B^0_{H, -2}$. By assumption, we can write $\ch_1(L)=\Theta+(c+2)f+\delta$ where $\delta\in\langle\Theta, f\rangle^\perp$. Since $c>-1$, we have 
$\Hom(L, \cO_\Theta(-1))=H^0(\cO_\Theta(-1-c))=0$, it follows that $N\in \langle \cO_X[1]\rangle$.  Also $c>-2$ implies that $\Im Z_H (L)>0$. Then the rest of the argument carries over. 

For Theorem \ref{Thm:goingupV}, going up lemma along $V$-axis, as above we have 
$\Hom(L, \cO_\Theta(-1))=0$, then the arguments completely carries over. 

For Corollary \ref{Cor:stableDaxis}, again we only need to check the condition in Proposition 9.9. First by Lemma \ref{lem:lbfdonetr}, we have $\Upsilon(L)$ is a line bundle. Assume $\Im Z_D(\Upsilon(L))>0$, then $L$ is $\Upsilon_{\Coh^{\omega, 0}}\text{-WIT}_0$. Since $\ker(Z_D)\cap \Coh^{\omega, 0}=\langle\cO_X[1]\rangle$, there is no short exact sequence in $\Coh^{\omega, 0}$
\begin{equation*}
 0\rightarrow M\rightarrow \Upsilon(L)\rightarrow N\rightarrow 0
 \end{equation*}
	with $M\in \ker(Z_D)\cap \Coh^{\omega, 0}$.
If $N\in \ker(Z_D)\cap \Coh^{\omega, 0}$, we have $\phi_D(N)=1$; since $\Im Z_D (\Upsilon (L))>0$, it follows that $\phi_D (\Upsilon(L))<1$, and so by the weak seesaw property we have  $\phi_D(M) < \phi_D(N)$. 
Assume $\Im Z_D(\Upsilon(L))\leq 0$, then $L$ is $\Upsilon_{\Coh^{\omega, 0}}\text{-WIT}_1$.
	Assume we have a short exact sequence in $\Coh^{\omega, 0}$
	\begin{equation}
 \label{Equ:ses9.9}
	    0\rightarrow M\rightarrow \Upsilon(L)[1]\rightarrow N\rightarrow 0.
     \end{equation}
Since $\ch_1(L)\cdot f=1$, we have $\ch_1(\Upsilon(L))\cdot f=-1$. Then $\Hom(\cO_X, \Upsilon(L))=0$, since otherwise there exists a s.e.s. in $\Coh(X)$ 
\[
0\rightarrow \cO_X\rightarrow \Upsilon(L)\rightarrow R\rightarrow 0
\]
where $R$ is a torsion sheaf such that $c_1(R)\cdot f=-1$, which leads to a contradiction. 
This implies that $M\notin \ker(Z_D)\cap \Coh^{\omega, 0}$. Now suppose $N\in \ker(Z_D)\cap \Coh^{\omega, 0}$.  In this case, we have  $\phi_D(N)=\phi_D(\cO_X[1])=1$.  If $\Im Z_D (\Upsilon (L)[1])>0$, by the weak seesaw property, we have $\phi_D (M) < \phi_D (N)$.  So suppose $\Im Z_D (\Upsilon (L)[1])=0$ from now on.  Since $N$ is a direct sum of copies of $\OO_X[1]$,  the morphism $\Upsilon(L)[1] \to N$ induces a nonzero morphism $\Upsilon (L) \to \OO_X$ which fits into a s.e.s.
\[
0\rightarrow \Upsilon(L)\rightarrow \cO_X\rightarrow R'\rightarrow 0.
\]
  By assumption we have $-c+s=1$ and $c\geq -1$, which implies that $s\geq 0$.
Then $\ch_1(\Upsilon(L))\cdot\Theta=s+2\geq 2$, which implies that $c_1(R')\cdot\Theta\leq -2$. Since $R'$ is a torsion sheaf, $c_1(R')$ is effective. We write $c_1(R')=k\Theta+bf+\sum_i a_iC_i$, where $C_i$'s are other irreducible curves on $X$ and all the coefficients are greater or equal to zero.  Since $c_1(R')\cdot \Theta$ is negative, we must have  $k>0$ . Since $c_1(R')\cdot f=1$, we can only have $k=1$ and $a_i=0$ for all $i$.
Then $c_1(R')\cdot\Theta\leq -2$ further implies that $b\leq 0$, hence $b=0$, i.e. 
$\ch_1(\Upsilon(L))=-\Theta$. 
Since $\Hom(\cO_X(-\Theta), \cO_X)$ is $1$-dimensional, the map from $\cO_X(-\Theta)$ to $\cO_X$ is injective, and  the s.e.s \ref{Equ:ses9.9} is of the from 
\begin{equation}
\label{Equ:Ninker}
0\rightarrow G\rightarrow \cO_X(-\Theta)[1]\rightarrow \cO_X^{\oplus n}[1]\rightarrow 0,
\end{equation}
where $G\in \Coh(X)$. Then $G\notin \ker(Z_D)\cap\Coh^{\omega, 0}$, and $\Im(Z_D(G))>0$.
This implies that $\phi_D(G)<\phi_D(\cO_X[1])=1$, and the condition in Proposition \ref{prop:weakstabconcorr-var1} is satisfied.

For Theorem \ref{Thm:goingupD}, 
the argument carries over till the end, assume that $\Upsilon(L)$ is strictly $\sigma_{V,D}$-stable for some $V>0$. Denoting the distabilizing s.e.s. in $\Coh^{\omega, 0}$ by
\[
0\rightarrow E\rightarrow \Upsilon(L)[1]\rightarrow F\rightarrow 0.
\]
 Then $F\in\ker(Z_D)\cap \Coh^{\omega, 0}$, and this s.e.s. must be of the form in \ref{Equ:Ninker}. Then $\phi_{V, D}\cO_X[1]=1$ for any $V>0$. Since $\phi_{V, D}(E)<1$ for any $V>0$, we can never have $\phi_{V,D}(E)=\phi_{V, D}(\Upsilon(L)[1])=\phi_{V, D}(F)$, hence $\Upsilon(L)$ is $\sigma_{V, D}$-stable for all $V>0$.
\end{proof}

\bibliography{refs}{}
\bibliographystyle{plain}

\end{document}